\newtheorem{thm}{Theorem}[section]
\newtheorem{lem}[thm]{Lemma}
\newtheorem*{lem*}{Lemma}
\newtheorem{prop}[thm]{Proposition}
\theoremstyle{definition}
\theoremstyle{remark}
\newtheorem*{rem}{Remark}
\newtheorem*{thm*}{Theorem}
\begin{document}

\title[PRSA analysis]{Theoretical analysis of phase-rectified signal averaging (PRSA) algorithm}

\author{Jiro Akahori}
\address{Department of Mathematical Sciences, Ritsumeikan University, Shiga, 525-8577, Japan}
\email{jiro.akahori@gmail.com}

\author{Joseph Najnudel}
\address{School of Mathematics, University of Bristol, BS8 1TW, Bristol, United Kingdom}
\email{joseph.najnudel@bristol.ac.uk}

\author{Hau-Tieng Wu}
\address{Courant Institute of Mathematical Sciences, New York University, New York, NY, 10012, USA}
\email{hauwu@cims.nyu.edu}

\author{Ju-Yi Yen}
\address{Department of Mathematical Sciences, University of Cincinnati, Cincinnati, OH 45221, USA}
\email{ju-yi.yen@uc.edu}
\thanks{J.-Y. Yen is  is grateful to the Mathematics Division of the National Center for Theoretical Sciences (Taiwan) for their hospitality and support during some extended visits.}

\begin{abstract}
Phase-rectified signal averaging (PRSA) is a widely used algorithm to analyze nonstationary biomedical time series. The method operates by identifying hinge points in the time series according to prescribed rules, extracting segments centered at these points (with overlap permitted), and then averaging the segments. The resulting output is intended to capture the underlying quasi-oscillatory pattern of the signal, which can subsequently serve as input for further scientific analysis.
However, a theoretical analysis of PRSA is lacking. In this paper, we investigate PRSA under two settings.
First, when the input consists of a superposition of two oscillatory components, $\cos(2\pi t)+A\cos(2\pi (\xi t+\phi))$, where $A>0$, $\xi\in (0,1)$ and $\phi\in [0,1)$, we show that, asymptotically when the sample size $n\to \infty$, the PRSA output takes the form $A'\sin(2\pi t)+B'\sin(2\pi \xi t)$, where $A',B'\neq 0$. 
Second, when the input is a stationary Gaussian random process, we establish a central limit theorem: under mild regularity conditions, the averaged vector produced by PRSA converges in distribution to a Gaussian random vector as $n\to \infty$ with mean determined by the covariance structure of the random process. These results indicate that caution is warranted when interpreting PRSA outputs for scientific applications.
\end{abstract}

\maketitle

\section{Introduction}
Nonstationary time series from complex systems over extended periods, such as heartbeats, respiration or brain waves, usually experience continuous internal and external influences that disrupt their periodic behavior and reset regulatory processes, causing oscillatory desynchronization. This results in signals exhibiting putative ``quasi-periodic'' characteristics with multiple periodic segments that is often contaminated by undesired noises. In addition to the mathematical definition of quasi-periodicity that a quasi-periodic signal is the superposition of several periodic functions with incommensurate (irrationally related) frequencies \cite{levitan1982almost,arnol2013mathematical,amerio2013almost}, in practice quasi-periodicity is also referred to qualitatively as ``approximately but not exactly repeating'' behavior. In this case, conventional approaches such as spectral analysis face inherent limitations in characterizing such signals, largely due to their quasi-periodic nature and the absence of explicit phase-reset mechanisms.

Phase-rectified signal averaging (PRSA) \cite{bauer2006phase} is an algorithm specifically designed to analyze such challenging time series, with its core objective being the quantification of quasi-periodic structures masked by the non-stationary nature of composite signals and noise. 
The PRSA algorithm is composed of three steps. 
The first step is identifying the main repetitive pattern (or called quasi-periodicity in \cite{bauer2006phase}) within the given time series $x_i$, $i=1,\ldots,n$, even if the signal is noisy or irregular, by finding {\em hinge point} (or called {\em coherence time} in \cite{bauer2006phase}). Denote the hinge points as $\mathcal{H}\subset\{1,\ldots,n\}$. Once identified, a local segment of the signal centered at each hinge point is cut with a predefined length $L\in \mathbb{N}$, which is called a ``cycle''; that is, for $j\in \mathcal{H}$, $X_j:=\{x_{j-L},\ldots,x_{j+L}\}\in \mathbb{R}^{2L+1}$. The second step is collecting these cycles and aligning them according to the hinge points. These synchronized cycles are averaged by $z_{n,L}:=\frac{1}{|\mathcal{H}|}\sum_{j\in \mathcal{H}}X_{j}$ to reduce random noise present in each individual cycle. The resulting $z_{n,L}$ is expected to be the hidden pattern that repeats itself inside the time series. The third step is quantifying the behavior of $z_{n,L}$ by applying any proper nonstationary time series analysis tools. 
The authors claim that PRSA allows users to {\em quantify the typical coherence time for each quasi-periodicity and to separate processes occurring during increasing and decreasing parts of the signal}, which is implemented in identifying the underlying ``periodic segments'' of the time series in Step 1, and averaging these segments to reveal coherent patterns within the quasi-periodic signal structure in Step 2. 
It is widely claimed and believed that PRSA captures characteristic quasi-periodicities, short-term correlations, and time inversion asymmetry (causality), while mitigating non-stationarities and noise.

Since its introduction, PRSA has been applied to study various biomedical time series. For example, acidaemia at birth from fetal heart rate \cite{georgieva2014phase}, heart rate during atrial fibrillation attack \cite{lemay2008phase}, EEG as an anesthesia depth monitor \cite{liu2017quasi}, autonomic changes with aging \cite{campana2010phase}, mortality after acute myocardial infarction \cite{kisohara2013multi}, quantification of cardiac vagal modulation \cite{nasario2014refining}, to name but a few. 
However, to our knowledge, a theoretical framework that rigorously characterizes the behavior of PRSA is still lacking, aside from some ad hoc arguments \cite{bauer2006phase}. For example, although we agree with the qualitative statement that the hinge points, or coherence time, may be related to each quasi-periodicity, the precise conditions under which this relationship holds remain unclear. It is stated in \cite[p.~427]{bauer2006phase} that {\em ..., the anchor points are determined from the signal itself. Hence, the anchor points are always phase synchronized with the signal, even if the phase is unstable or non-stationarities occur.} While this intuition is appealing, a rigorous argument is needed to fully understand how PRSA operates.
Since PRSA has been widely applied in scientific studies with promising empirical results, it is therefore important to establish a solid theoretical foundation under well-defined mathematical models, which is the main objective of this paper. We also note that PRSA has been extended to multivariate time series \cite{schumann2008bivariate,bauer2009bprsa,muller2012bivariate}; however, in this work, we restrict our analysis to the univariate setting.

In this paper, we analyze PRSA under two mathematical models, motivated by questions in statistical inference and its applications to biomedical signal analysis. 
First, we study a simple deterministic signal with two oscillatory components, $\cos(2\pi t)+A\cos(2\pi (\xi t+\phi))$, where $A>0$, $\xi\in (0,1)$ and $\phi\in [0,1)$. We show that asymptotically when the sample size $n\to \infty$, $z_{n,L}$ is a discretization of $A'\sin(2\pi t)+B'\sin(2\pi \xi t)$, where $A',B'\neq 0$. See Theorem \ref{theorem deteministic discrete}. It is worth noting that, although a precise mathematical definition is not provided in \cite{bauer2006phase}, the notion of quasi-periodicity considered therein appears to be more general than the classical mathematical formulations \cite{levitan1982almost,arnol2013mathematical,amerio2013almost}. In particular, the oscillatory components in \cite{bauer2006phase} may exhibit time-varying amplitudes and frequencies, making them substantially more complex than the standard quasi-periodic functions studied in analysis. The first model we consider represents a simplified version of this scenario, yet, as we shall demonstrate below, it remains far from trivial.

Second, we consider a null setting in which the input time series $x_i$ is a stationary random process. Under mild regularity conditions that the covariance function decays to 0, we establish a law of large number in Proposition \ref{lawlargenumbers} that $z_{n,L}$ converges in probability to a nontrivial vector $\left(\frac{\mathbb{E}[ x_{\ell} \mathds{1}_{w_0 > c} ]}{\mathbb{E}[ \mathds{1}_{w_0 > c} ] }\right)_{\ell=-L}^L$ with a precise expression in \eqref{expression of zeta in LNN}. If we further assume that the covariance function decays to 0 sufficiently fast, we obtain a central limit theorem in Theorem \ref{central limit theorem for prsa} that $\sqrt{n} z_{n,L}$  converges in distribution to a Gaussian vector as $n\to \infty$. The main technical challenge in proving the theorem lies in understanding how the criterion $w_i>c$, which effectively acts as a weighting mechanism in \eqref{equation zn formula}, influences the averaging toward $z_{n,L}$. In the deterministic setting, this difficulty is addressed using Weyl's equilibrium lemma, while in the stochastic setting we carefully leverage the Gaussian properties and establish a cumulant control. 
These results highlight fundamental features of PRSA. Even in the simple setting of a signal composed of fixed-frequency oscillations with constant amplitudes and no noise contamination, the algorithm might provide disturbed quasi-periodic output. On the other hand, in the pure-noise case, the algorithm can still produce nontrivial quasi-periodic like outputs depending on the covariance structure of the noise. Both phenomena may lead to misleading interpretations. Thus, although PRSA has been shown to yield useful insights in biomedical signal analysis, caution is warranted when interpreting its outputs, particularly when treating them as quasi-   periodic signals for scientific investigation.

The remainder of this paper is organized as follows.
Section~\ref{section PRSA algorithm} provides a concise summary of the PRSA algorithm.
In Section~\ref{section PRSA det}, we analyze the algorithm under a deterministic two-harmonic oscillatory model.
Section~\ref{section PRSA sto} extends the analysis to stationary Gaussian random processes.
In Sections \ref{section PRSA det} and \ref{section PRSA sto}, numerical simulations are presented to validate the theoretical results and to illustrate the intricate behavior of PRSA in various settings. The Matlab code to reproduce these results can be found in \url{https://github.com/hautiengwu2/PRSA.git}.

\section{PRSA algorithm and mathematical model}\label{section PRSA algorithm}

\subsection{PRSA algorithm}
We describe the PRSA algorithm in the general framework.
Consider a time series $x_i\in \mathbb{R}$, $i\in\{1,2,\ldots,n\}$, and a set of {\em decision making functions} modeled by measurable functions $f_i:\mathbb{R}^n\to \mathbb{R}$. Fix $L\in \mathbb{N}$ to be the window length associated with the hidden pattern of interest.

\begin{enumerate}
\item Find $i\in \{1,\ldots,n\}$ so that {\em predefined} conditions are satisfies; that is, $f_i(x_1,\ldots,x_{i-1},x_{i},x_{i+1},\ldots,x_n)$ satisfies some conditions. Collect all these indices as $\mathcal{H}_{c,n}\subset \{1,\ldots,n\}$, which is called the {\em hinge point} set; that is, 
\[
\mathcal{H}_{c,n}:=\{i|\, f_i(x_1,\ldots,x_{i-1},x_{i},x_{i+1},\ldots,x_n) \mbox{ satisfies some conditions}\}\,.
\] 

\item For each $i\in \mathcal{H}_{c,n}\cap\{L+1,\ldots,n-L\}$, collect segments 
\[
X_{i,L}:=[x_{i-L},x_{i-L+1},\ldots, x_{i+L-1},x_{i+L}]^\top\in \mathbb{R}^{2L+1}\,,
\]
and evaluate 
\[
z_{n,L}=\frac{1}{|\mathcal{H}_{c,n}|}\sum_{i\in \mathcal{H}_{c,n}} X_{i,L}\in \mathbb{R}^{2L+1}\,.
\]

\item Quantify the behavior of $z_{n,L}$ by a chosen signal processing tool. 
\end{enumerate}

In practice, we can design general hinge points and complicated rules. For example, if $x_i$ is a electrocardiogram, $f_i$ could be the algorithm detecting the R peaks, and we can impose that $f_i$ satisfies some conditions. 
As the first exploration, in this paper, we focus our exploration on the simplest one that is commonly used in practice \cite{bauer2006phase,kantelhardt2007prsa}; that is, we choose the decision making functions $f_i$ to be 
\begin{align}\label{definition rule of fi}
f_i(\ldots,x_{i-1},x_{i},x_{i+1},\ldots):=x_i-x_{i-1}
\end{align}
and the rules to be 
\begin{align}
f_i(\ldots,x_{i-1},x_{i},x_{i+1},\ldots)>c
\end{align}
for some predetermined $c\in \mathbb{R}$. In this case, $\mathcal{H}_{n,c}$ is determined from the first order finite difference of $x_i$, denoted as $w_i:=x_i-x_{i-1}$, and $\mathcal{H}_c$ is the level set of $w_i$ above $c$. Usually, $c$ is chosen to be $0$, but a different $c$ could also be considered. $L\in \mathbb{N}$ is  a sufficiently large integer (like $10$). Our goal is exploring the asymptotic behavior of $z_{n,L}$ when $N\to \infty$, under proper models. 

\begin{rem} We shall remark that in practice, researchers quantify the dynamics of $z_{n,L}$ using various signal processing tools to generate a summary index for the nonstationary time series under exploration. For example, a commonly applied index 
\[
c_{n,L}:=\frac{1}{L-\lfloor L/2\rfloor+1}\sum_{j=\lfloor L/2\rfloor}^L z_{n,L}(j)- \frac{1}{L-\lfloor L/2\rfloor+1}\sum_{j=-L}^{-\lfloor L/2\rfloor}z_{n,L}(j)\,.
\]
is a coefficient of the wavelet transform of $z_{n}$ with a simple ``Haar-like'' wavelet. 
$c_{n,L}$ is used as a feature to study the spectral content associated with the nonstationarity of a given time series. With the established statistical behavior of $z_{n,L}$, the statistical behavior of $c_{n,L}$ can be explored. 
\end{rem}

\begin{rem}
We note that for time series sampled at high frequency, it is reasonable to approximate the data using a continuous model. The PRSA algorithm can be naturally extended to this setting.
Consider a continuous function $g\in C([0,T])$, where $T>0$, and a set of {\em decision making functions} modeled by measurable functions $f_y:C([0,y])\to \mathbb{R}$. Fix $L>0$ to be the window length associated with the hidden pattern of interest.
First, find $y\in [0,T]$ so that {\em predefined} conditions are satisfies; that is, $f_y(g|_{[0,y]})$ satisfies some conditions. Collect all these indices as $\mathcal{H}_{T}\subset [0,T]$, which is called the {\em hinge point} set
Second, suppose $\mathcal{H}_T$ is measurable. Construct a function $z_{L}:[-L,L]\to \mathbb{R}$ by evaluating 
$z_{L}(z)=\frac{1}{|\mathcal{H}_T|}\int_{\mathcal{H}_T} f(t+z)dt$.
Third, quantify the behavior of $z_{L}$ by a chosen signal processing tool. 
%
In parallel to the PRSA algorithm we analyze in this work, the function $g\in C^1([0,T])$, the decision making functions $f_y$ is 
$f_y(g|_{[0,y)}):=g'(y)$,
and the rule is
$f_y(g|_{[0,y)})>c$
for some predetermined $c\in \mathbb{R}$. 
We can thus explore the asymptotic behavior of $z_{L}$ when $T\to \infty$ under proper assumptions. 
\end{rem}

\subsection{Relationship with other algorithms}

PRSA is closely related to several existing algorithms in time series analysis. A fundamental idea underlying PRSA is the existence of a repetitive pattern in the signal and the assumption that such a pattern can be effectively extracted through the use of hinge points and averaging. {The concept of quantifying the recurrence has also been widely explored in biomedical signal processing. For example, recurrence quantification analysis (RQA) \cite{zbilut1992embeddings} quantifies the number and duration of recurrences in a time series to characterize the underlying system's dynamics. Typically, RQA is performed by reconstructing the system's trajectory in phase space using Taken's embedding theorem \cite{takens2006detecting}. The approach by which RQA identifies similar patterns differs from that of PRSA, which relies on specific anchor conditions (such as ascending or descending points), suggesting potential complementarity between the two methods.

The idea of identifying and recovering hidden repetitive patterns is also closely related to signal decomposition algorithms based on wave-shape functions (WSFs) \cite{HTWu2013}, which are 1-periodic functions representing the repetitive waveform structure. WSFs play a central role in modeling oscillatory signals composed of multiple non-sinusoidal components. By applying time-frequency analysis techniques to extract the instantaneous frequency \cite{CYLin2018} and phase, hinge points can be determined from the phase information, thereby guiding the design of signal decomposition schemes.
For instance, if the hinge points correspond to the R-peaks in a maternal ECG signal and the window length $L$ is chosen sufficiently large to cover a complete cardiac cycle, the resulting $z_{n,L}$ provides a representative template of maternal cardiac cycles. In more general cases where the WSF is not unique, meaning that multiple repetitive patterns coexist, a manifold structure can be introduced to parametrize all possible WSFs, referred to as the WSF manifold \cite{WSFmanifold2021}. This manifold model motivates the use of low-rank structures for signal decomposition and denoising. In particular, by combining matrix denoising with optimal shrinkage, this framework has been successfully applied to separate fetal ECG from trans-abdominal maternal ECG recordings \cite{su2025data} and other applications.}

The template construction step via averaging, $z_{n,L}:=\frac{1}{|\mathcal{H}|}\sum_{j\in \mathcal{H}}X_{j}$, is inherently linked to kernel regression \cite{hardle1992kernel}. Specifically, if the hinge points are defined by the criterion $w_i:=x_{i}-x_{i-1}>c$ for some $c\in \mathbb{R}$, we can rewrite 
\begin{align}\label{equation zn formula}
z_{n,L}=\frac{\sum_{j}X_{j}\mathds{1}_{w_j>c}}{\sum_j \mathds{1}_{w_j>c}}\in \mathbb{R}^{2L+1}.
\end{align}
By interpreting $\mathds{1}_{w_j>c}$ as a data-driven kernel that depends on the level set of the finite difference of $x_i$, PRSA can be directly associated with Nadaraya-Watson kernel regression \cite{nadaraya1964estimating,watson1964smooth}, but with a highly nontrivial kernel. 
{Recall that in a standard regression problem, one considers a dataset $\{x_i,y_i\}_{i=1}^n\subset \mathbb{R}^d\times \mathbb{R}^{d'}$, where the {\em response} $y_i$ and {\em predictor} $x_i$ are linked by a {\em regression function $f\in C^1(\mathbb{R}^d,\mathbb{R}^{d'})$} via $y_i=f(x_i)+\epsilon_i$ with centered noise $\epsilon_i$ of finite covariance. Typically, $x_i$ are sampled independently following some distribution, and are independent of $\epsilon_i$. In practice, the goal is to estimate $f(x)$ for a given $x\in \mathbb{R}^d$. The Nadaraya-Watson estimator is defined as $\hat{f}(x):=\frac{\sum_{j=1}^nK\big(\frac{\|x-x_j\|}{h}\big)y_j}{\sum_{j=1}^nK\big(\frac{\|x-x_j\|}{h}\big)}$, where $K$ is a chosen kernel and $h>0$ is called the bandwidth. To see how \eqref{equation zn formula} relates to the Nadaraya-Watson estimator, suppose that the hidden pattern we aim to recover with PRSA can be modeled as $f(x_i)\in \mathbb{R}^{2L+1}$, where $x_i$ is the predictor encoded in the observed time series that we cannot directly access, and the noisy response is $X_i=f(x_i)+\epsilon_i$. Under this heuristic model, $\mathds{1}_{w_j>c}$ can be viewed as a highly nonlinear and predictor-dependent kernel without bandwidth within the Nadaraya-Waston framework. It should be emphasized, however, that because of the strong dependence between the predictor and noise, the nonlinear nature of the regression function, and the complexity of the implicit kernel, this analogy is only heuristic and not meant as a rigorous correspondence.}

Furthermore, the criterion $w_i:=x_{i}-x_{i-1}>c$ is reminiscent of the approach used in topological data analysis (TDA) to study time series \cite{chung2021persistent}. To construct the persistent homology of a time series to quantify its topological structure, a common idea is to build a filtration by varying the threshold $c$; that is, $F_c=\{j|w_j>c\}$ so that $F_c\subset F_{c'}$ when $c'<c$. The evolution of $F_c$ as $c$ varies plays a critical role in characterizing this filtration, and hence the associated persistent homology. Since the set $F_c$ is directly involved in \eqref{equation zn formula}, the analysis of PRSA is directly related to understanding how TDA can be applied to time series. 
These connections of PRSA with other algorithms reflect the rich structure underlying PRSA.

\subsection{Mathematical model}
In this paper, we adapt the widely used mathematical definition of quasi-periodicity \cite{levitan1982almost,arnol2013mathematical,amerio2013almost}. With this perspective, we introduce two mathematical models to investigate the behavior of the PRSA algorithm. 

The first model examines PRSA with the decision-making function \eqref{definition rule of fi} in a deterministic setup, where the input signal consists of two discretized sinusoidal functions; that is, $x_n$ is a discretization of $f(t)=\cos(2\pi t)+A\cos(2\pi(\xi t+\phi))$, where $A>0$, $\xi\in (0,1)$ and $\phi\in [0,1)$. When $\xi$ is irrational, $f$ is quasi-periodic by definition \cite{levitan1982almost,arnol2013mathematical,amerio2013almost}. Here, we analyze the asymptotic behavior of $z_{n,L}$ as $n \to \infty$ with $L$ fixed. For the deterministic setup, we also derive the continuous version for a comparison. 

The second model investigates the asymptotic behavior of PRSA with the decision-making function \eqref{definition rule of fi} under a stationary stochastic framework; that is,
consider a centered stationary Gaussian process \( (x_n)_{n \in \mathbb{Z}} \) with covariance function \( C(k) = \mathbb{E}[x_0 x_k] \) such that $C(0) > 0$. In this setting, we derive a central limit theorem for the random vector $z_{n,L}$ as $n \to \infty$ with $L$ fixed.

Together, these two models provide insight into the complex and nonlinear behavior of PRSA. We note that when a signal arises from an explicit mechanical system, such as those based on multi-periodic motions in integrable Hamiltonian systems \cite{arnol2013mathematical}, the discussion could be extended. However, taking such mechanistic formulations into the analysis falls beyond the scope of the present work.

\section{PRSA of deterministic signals} \label{section PRSA det}

To gain intuition for the behavior of the PRSA statistic, it is helpful to interpret the underlying mechanism as a form of wave superposition. 
Consider the two harmonic components model, $f(t)=\cos(2\pi t)+A\cos(2\pi(\xi t+\phi))$, where $A> 0$, $\xi\in (0,1)$ and $\phi\in [0,1)$. When $A\to 0$, this model is reduced to the single harmonic component model. The goal is to show that when there are multiple harmonic components in the signal, we may need to be careful when we interpret the PRSA's original goal of detecting quasi-periodicities in non-stationary data, since the result might be misleading.

The signal averaging performed by PRSA can be seen as aligning and averaging portions of the signal that share similar local dynamics. In this paper, we specifically consider instances where the incremental difference $w_m = x_m - x_{m-1}$ exceeds a threshold. This thresholding acts as a phase selector, and the subsequent averaging resembles the interference pattern of waves with partially synchronized phases. 
Such a perspective suggests that $z_{n,\ell}$ captures coherent structures in the signal by amplifying recurring directional patterns, much like constructive interference amplifies signals in wave theory. This view aligns with the original motivation of PRSA as a tool for extracting quasi-periodic features from noisy time series.
We establish the following theorem.

\begin{thm}\label{theorem deteministic discrete}
Consider the deterministic signal given by 
\[
x_m = \cos(2\pi \xi_1 m + \varphi_1) + A \cos(2\pi \xi_2 m  + \varphi_2), 
\]
where $m \in \mathbb{Z}$, 
with amplitude $A>0$, frequencies $\xi_1,\xi_2 \in (0,1)$ and phases $\varphi_1, \varphi_2 \in \mathbb{R}$. We assume that $1, \xi_1, \xi_2$ 
are linearly independent over $\mathbb{Q}$, and that
$$ c< 2 [\sin(\pi \xi_1) + A \sin (\pi \xi_2)].$$
Then, for fixed $L \geq 1$, $-L \leq \ell \leq L$, as $n\to \infty$, we have \[
z_{n,L}(\ell)\to B_1 \sin ( \pi \xi_1 (2 \ell + 1)) + B_2 \sin ( \pi \xi_2 (2 \ell + 1))
\]
for $\ell=-L,\ldots,L$, where 

$$B_1 = \frac{ \int_{\max\left(-1,\frac{c/2 - \sin(\pi \xi_1)}{A \sin (\pi \xi_2)} \right)}^{\min\left(1,\frac{c/2 + \sin(\pi \xi_1)}{A \sin (\pi \xi_2)} \right)} \, \sqrt{\frac{   1 - \left(\frac{c/2 - A u \sin (\pi \xi_2)}{\sin(\pi \xi_1)} \right)^2 }{1 - u^2}} du }{\int_{-1}^1 \int_{-1}^1 \frac{1}{\sqrt{(1 - u^2)(1-v^2)}} \mathds{1}_{u \sin (\pi \xi_1) +A v \sin (\pi \xi_2) > c/2 } du \, dv}\,, 
$$
$$B_2 = \frac{ \int_{\max\left(-1,\frac{c/2 - A\sin(\pi \xi_2)}{ \sin (\pi \xi_1)} \right)}^{\min\left(1,\frac{c/2 + A\sin(\pi \xi_2)}{ \sin (\pi \xi_1)} \right)} \, \sqrt{\frac{   1 - \left(\frac{c/2 -  u \sin (\pi \xi_1)}{A\sin(\pi \xi_2)} \right)^2 }{1 - u^2}} du }{\int_{-1}^1 \int_{-1}^1 \frac{1}{\sqrt{(1 - u^2)(1-v^2)}} \mathds{1}_{u \sin (\pi \xi_1) +A v \sin (\pi \xi_2) > c/2 } du \, dv},
$$
where the denominator does not vanish when $c$ is properly chosen. 
In the particular case where $c = 0$, these expressions simplify to 
$$
B_1 = \frac{4}{\pi^2} \int_0^{\min (1, 1/(A \xi))}
\sqrt{ \frac{ 1 - A^2 \xi^2 u^2}{1 - u^2}} du\,,
$$
for $\xi := \frac{\sin (\pi \xi_2)}{\sin (\pi \xi_1)}$, and
$$
B_2= \frac{4 A^2 \xi}{\pi^2} \int_0^{\min (1, 1/(A \xi))}
\sqrt{ \frac{ 1 -  u^2}{1 - A^2 \xi^2  u^2}} du.
$$
\end{thm}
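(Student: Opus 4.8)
The plan is to linearize the hinge--point criterion $w_m>c$, observe that after this the entire expression in \eqref{equation zn formula} becomes a fixed bounded function sampled along a linear orbit on the torus $\mathbb{T}^2=(\mathbb{R}/2\pi\mathbb{Z})^2$, and then invoke Weyl's equidistribution theorem; the constants $B_1,B_2$ then emerge from elementary one--dimensional integrals. First I would apply the identity $\cos a-\cos b=-2\sin\frac{a+b}{2}\sin\frac{a-b}{2}$ to $w_m=x_m-x_{m-1}$, which gives
\[
w_m=-2\bigl[\sin(\pi\xi_1)\sin\theta_1(m)+A\sin(\pi\xi_2)\sin\theta_2(m)\bigr],\qquad \theta_j(m):=2\pi\xi_j\!\left(m-\tfrac{1}{2}\right)+\varphi_j,
\]
and, after the same phase shift, $x_{m+\ell}=\cos\!\bigl(\theta_1(m)+\pi\xi_1(2\ell+1)\bigr)+A\cos\!\bigl(\theta_2(m)+\pi\xi_2(2\ell+1)\bigr)$. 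Since $\mathcal{H}_{c,n}=\{m:w_m>c\}$, for each fixed $\ell$ the coordinate $z_{n,L}(\ell)$ is the ratio $\bigl(\sum_m x_{m+\ell}\mathds{1}_{w_m>c}\bigr)\big/\bigl(\sum_m \mathds{1}_{w_m>c}\bigr)$, that is, a ratio of Birkhoff--type sums of a fixed function of $(\theta_1,\theta_2)\bmod 2\pi$ along the orbit $m\mapsto(\theta_1(m),\theta_2(m))$ (the finitely many omitted boundary indices are irrelevant since $L$ is fixed).

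Next I would pass to the limit. Because $1,\xi_1,\xi_2$ are linearly independent over $\mathbb{Q}$, the orbit $(\theta_1(m),\theta_2(m))\bmod 2\pi$ equidistributes on $\mathbb{T}^2$ by Weyl's criterion. The weight $\mathds{1}_{w_m>c}$ is the indicator of $E=\{(\theta_1,\theta_2):\sin(\pi\xi_1)\sin\theta_1+A\sin(\pi\xi_2)\sin\theta_2<-c/2\}$, and since $\sin(\pi\xi_1)\ne 0$ for $\xi_1\in(0,1)$ the boundary $\partial E$ lies on a real--analytic curve, hence is Lebesgue--null; thus $\mathds{1}_E$ is Riemann integrable and equidistribution applies to it and to $x_{m+\ell}\mathds{1}_E$. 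Dividing numerator by denominator, the $(2\pi)^{-2}$ normalizations cancel and
\[
z_{n,L}(\ell)\longrightarrow\frac{\displaystyle\int_{\mathbb{T}^2}\!\Bigl[\cos\!\bigl(\theta_1+\pi\xi_1(2\ell+1)\bigr)+A\cos\!\bigl(\theta_2+\pi\xi_2(2\ell+1)\bigr)\Bigr]\mathds{1}_E\,d\theta_1\,d\theta_2}{\displaystyle\int_{\mathbb{T}^2}\mathds{1}_E\,d\theta_1\,d\theta_2},
\]
provided the denominator is positive. Because $\sin(\pi\xi_1),\sin(\pi\xi_2)>0$, the set $E$ is open and nonempty exactly when $c/2<\sin(\pi\xi_1)+A\sin(\pi\xi_2)$, i.e.\ under the stated hypothesis; this is the precise meaning of the remark that ``the denominator does not vanish when $c$ is properly chosen''.

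It remains to evaluate the two integrals. Writing $\cos(\theta_j+\beta)=\cos\theta_j\cos\beta-\sin\theta_j\sin\beta$ and noting that $\mathds{1}_E$ depends on $(\theta_1,\theta_2)$ only through $(\sin\theta_1,\sin\theta_2)$, the terms carrying $\cos\theta_j$ integrate to $0$ (for each fixed value of the other variable, $\int_0^{2\pi}\cos\theta\,g(\sin\theta)\,d\theta=0$), which is the structural reason the limit is a pure sine series in $\ell$. Applying $u=\sin\theta$, under which $\int_0^{2\pi}h(\sin\theta)\,d\theta=2\int_{-1}^1 h(u)(1-u^2)^{-1/2}\,du$ and $\int_0^{2\pi}\sin\theta\,h(\sin\theta)\,d\theta=2\int_{-1}^1 u\,h(u)(1-u^2)^{-1/2}\,du$, together with the harmless reflection $(u,v)\mapsto(-u,-v)$ (which replaces the defining inequality of $E$ by $u\sin(\pi\xi_1)+Av\sin(\pi\xi_2)>c/2$, as in the statement), reduces everything to integrals over $(-1,1)^2$. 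The last one--dimensional integration uses the primitive $\int u(1-u^2)^{-1/2}\,du=-\sqrt{1-u^2}$: integrating out the first variable collapses $\int_{-1}^{1}u\,\mathds{1}_{u>\gamma(v)}(1-u^2)^{-1/2}\,du$ to $\sqrt{1-\gamma(v)^2}$ (and to $0$ when $\gamma(v)\notin(-1,1)$), where $\gamma(v)=\bigl(c/2-Av\sin(\pi\xi_2)\bigr)/\sin(\pi\xi_1)$, and the constraint $\gamma(v)\in[-1,1]$ yields exactly the integration limits in the formula for $B_1$; an entirely parallel computation with the two components interchanged gives $B_2$. For $c=0$ the extra simplification is by symmetry: $(u,v)\mapsto(-u,-v)$ interchanges $E$ with its complement, so the denominator equals $\tfrac{1}{2}\int_{(-1,1)^2}(1-u^2)^{-1/2}(1-v^2)^{-1/2}\,du\,dv=\pi^2/2$; the limits become symmetric about $0$, the integrals fold onto $[0,\,\cdot\,]$, and the substitution $u\mapsto A\xi u$ in the $B_2$--integral (with $\xi=\sin(\pi\xi_2)/\sin(\pi\xi_1)$) produces the stated closed forms.

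The main obstacle is really the only step that is not mechanical, namely the passage to the limit through the discontinuous weight $\mathds{1}_{w_m>c}$: one has to verify that $\partial E$ is Lebesgue--null (so Weyl's theorem genuinely applies to $\mathds{1}_E$ and not merely to continuous integrands) and that the limiting denominator $\int_{\mathbb{T}^2}\mathds{1}_E$ is strictly positive (so the limiting ratio is well defined), which is exactly what the hypothesis $c<2[\sin(\pi\xi_1)+A\sin(\pi\xi_2)]$ secures. Everything past that point is bookkeeping with trigonometric identities and elementary integrals.
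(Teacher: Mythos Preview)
Your proof is correct and follows essentially the same route as the paper: linearize $w_m$ via the product-to-sum identity, rewrite $z_{n,L}(\ell)$ as a ratio of Birkhoff sums in the torus variables $(\theta_1,\theta_2)$, pass to the limit by Weyl equidistribution, kill the cosine terms by the $\theta\mapsto\pi-\theta$ symmetry, and then evaluate the remaining one-dimensional integrals exactly as the paper does. If anything, your treatment is slightly more careful than the paper's in that you explicitly note $\partial E$ is Lebesgue-null (so equidistribution applies to the indicator and not merely to continuous test functions), a point the paper leaves implicit.
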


Note that in the case where $c = 0$, the expressions for $B_1$ and $B_2$ correspond to elliptic integrals of the second kind. Clearly, the relatively phase information $\varphi_2-\varphi_1$ disappears, and the quasi-periodic behavior of the signal is nonlinearly perturbed by PRSA. We can see clearly the nontrivial behavior of PRSA even in this simple two harmonic components model without noise contamination, which warrants that we shall be careful when interpret $z_{n,L}$ in practice. 

\begin{proof}
We have
\begin{align*}
w_m = \,& x_{m} - x_{m-1}
\\ = \,& [\cos(2\pi \xi_1 m + \varphi_1)
- \cos(2\pi \xi_1 (m-1)+ \varphi_1)]
\\ & + A [\cos(2\pi \xi_2 m + \varphi_2)
- \cos(2\pi \xi_2 (m-1)+ \varphi_2)]
\\ = \,& - 2 \sin ( \pi \xi_1 (2m-1) + \varphi_1) \sin ( \pi \xi_1 )
- 2 A \sin ( \pi \xi_2 (2m-1) + \varphi_2) \sin ( \pi \xi_2 ). 
\end{align*}
Thus, the hinge condition $w_m>c$ is determined by linear combination of sinusoidal terms 
involving both $\xi_1$ and $\xi_2$, weighted by the factors $\sin(\pi \xi_1)$ and $\sin(\pi \xi_2)$.
This makes explicit the role of the oscillatory structure in selecting hinge points.

For $\ell \in \mathbb{Z}$ and 
$n \geq 1$, consider the ratio 
$$
z_{n}(\ell) := \frac{\sum_{m = -n}^n x_{m+ \ell} \mathds{1}_{w_m > c}} {\sum_{m = -n}^n  \mathds{1}_{w_m > c}}
= \frac{\frac{1}{2n+1}\sum_{m = -n}^n x_{m+ \ell} \mathds{1}_{w_m > c}} {\frac{1}{2n+1}\sum_{m = -n}^n  \mathds{1}_{w_m > c}}\,.
$$
We now introduce random variables as a tool for proving convergence results, but we remind that the setting is deterministic. Denote the uniform probability measure on 
the finite set $\{-n,-n+1, \dots, n-1,n\}$ as $\mathbb{P}_n$, 
$\mathbb{E}_n$ denotes the expectation under $\mathbb{P}_n$, and $M_n$ is a random variable 
on $\{-n,-n+1, \dots, n-1,n\}$ which follows the distribution $\mathbb{P}_n$. With this notation, we have
$$
z_{n}(\ell) = \frac{ \mathbb{E}_n [ x_{M_n+ \ell} \mathds{1}_{w_{M_n} > c}]}{ \mathbb{P}_n (w_{M_n} > c)}\,.
$$
Now, let us define the following random variables:

$$U_n = \pi \xi_1 (2M_n - 1) + \varphi_1, \; V_n =  \pi \xi_2 (2M_n-1) + \varphi_2.$$
We have $w_{M_n} > c$ if and only if 
$$\sin(U_n) \sin(\pi \xi_1) 
+ A \sin(V_n) \sin(\pi \xi_2) < -c/2.$$
Hence, the hinge-point selection is given by a deterministic 
function of the pair $(U_n, V_n)$.
Moreover,
$$x_{M_n+ \ell} 
= \cos( U_n +  \pi \xi_1 (2\ell+1)) + A \cos( V_n +  \pi \xi_2 (2\ell+1))$$
is also a deterministic function of $U_n$ and $V_n$.

Since we assume that $1,\xi_1,\xi_2$ are linearly independent over $\mathbb{Q}$, by the multidimensional Weyl–Kronecker equidistribution theorem \cite{montgomery1994ten}, the sequence of random variables 
\[
(U_n\!\!\!\!\mod 2\pi, V_n\!\!\!\!\mod 2\pi)_{n \geq 1}
\]
converges in distribution to a random variable $(U,V)$ on $[0,2\pi)^2$, with probability distribution equal to $1/4 \pi^2$ times the Lebesgue measure on $[0, 2 \pi)^2$. 
We denote $\mathbb{P}_{U,V}$ this probability measure and $\mathbb{E}_{U,V}$ the expectation under $\mathbb{P}_{U,V}$.

We deduce the convergence: 
\begin{align*} 
& \mathbb{E}_n[ x_{M_n + \ell} 
\mathds{1}_{w_{M_n} > c}] \\ 
 \underset{n \rightarrow \infty}{\longrightarrow} \,&
\mathbb{E}_{U,V} [ (\cos( U +  \pi \xi_1 (2\ell+1)) + A \cos( V +  \pi \xi_2 (2\ell+1)))
\mathds{1}_{\sin(U) \sin(\pi \xi_1) 
+ A \sin(V) \sin(\pi \xi_2) < -c/2}]
\end{align*}
and similarly
$$
\mathbb{P}_n(w_{M_n} > c) 
\underset{n \rightarrow \infty}{\longrightarrow}
\mathbb{P}_{U,V}( \sin(U) \sin(\pi \xi_1) 
+ A \sin(V) \sin(\pi \xi_2) < -c/2)\,.
$$
Then, $z_{n} (\ell)$ converges, when $n \rightarrow \infty$, to a completely explicit quantity, given by the following conditional expectation: 
$$ 
\mathbb{E}_{U,V} [ \cos( U +  \pi \xi_1 (2\ell+1)) + A \cos( V +  \pi \xi_2 (2\ell+1))
\; \; \big|\sin(U) \sin(\pi \xi_1) 
+ A \sin(V) \sin(\pi \xi_2) < -c/2],  
$$
i.e.

\begin{align*}
&\lim_{n\to \infty}z_{n}(\ell)\\
=\,& \frac{ \mathbb{E}_{U,V} [(\cos( U +  \pi \xi_1 (2\ell+1)) + A \cos( V +  \pi \xi_2 (2\ell+1))) \mathds{1}_{\sin(U) \sin(\pi \xi_1) 
+ A \sin(V) \sin(\pi \xi_2) < -c/2}]}{\mathbb{P}_{U,V} ( \sin(U) \sin(\pi \xi_1) 
+ A \sin(V) \sin(\pi \xi_2) <-c/2)}  \,.
\end{align*}
In the numerator, we expand the cosines and notice that 
$$
\mathbb{E}_{U,V}[ (\cos U) \mathds{1}_{\sin U \sin (\pi \xi_1) + A  \sin V \sin (\pi \xi_2) < -c/2}] = \mathbb{E}_{U,V} [ (\cos V) \mathds{1}_{\sin U \sin (\pi \xi_1) + A  \sin V \sin (\pi \xi_2) < -c/2}] =0  
$$
by the symmetry $U \mapsto \pi - U$, $V \mapsto \pi - V$, and then
$$
\lim_{n\to \infty}z_{n}(\ell)=B_1 \sin ( \pi \xi_1 (2 \ell + 1)) + B_2 \sin ( \pi \xi_2 (2 \ell + 1)) 
$$
for
\begin{align*}
B_1 =\,& - \frac{ \mathbb{E}_{U,V}[ (\sin U) \mathds{1}_{\sin U \sin( \pi \xi_1)  + A \sin V \sin( \pi \xi_2)< -c/2}]}{\mathbb{P}_{U,V} ( \sin(U) \sin(\pi \xi_1) 
+ A \sin(V) \sin(\pi \xi_2) <-c/2)}  \\
=\,&  \frac{ \mathbb{E}_{U,V}[ (\sin U) \mathds{1}_{\sin U \sin( \pi \xi_1)  + A \sin V \sin( \pi \xi_2)> c/2}]}{\mathbb{P}_{U,V} ( \sin(U) \sin(\pi \xi_1) 
+ A \sin(V) \sin(\pi \xi_2) > c/2)}, 
\end{align*}
where in the second equality, we use the symmetric 
$(U,V) \mapsto (-U, -V)$, 
and similarly
$$ B_2 = \frac{ A \mathbb{E}_{U,V}[ (\sin V) \mathds{1}_{\sin U \sin( \pi \xi_1)  + A \sin V \sin( \pi \xi_2)> c/2}]}{\mathbb{P}_{U,V} ( \sin(U)\sin(\pi \xi_1) 
+ A \sin(V) \sin(\pi \xi_2) > c/2)}.$$

Now, for a quantity $a$ which is a Lebesgue-measurable function of $V$, we 
can compute the following conditional expectation 
$$
\mathbb{E}_{U,V} [ (\sin U) \mathds{1}_{\sin U > a} | V] 
= \frac{1}{\pi } \int_{\arcsin a}^{\pi/2} \sin u \, du
= \frac{1}{\pi } [- \cos u]_{\arcsin a}^{\pi/2} 
= \frac{\sqrt{1 - a^2}}{ \pi} 
$$
when $a \in [-1,1]$, 
$$\mathbb{E}_{U,V} [ (\sin U) \mathds{1}_{\sin U > a} | V]  = \mathbb{E}_{U,V} [ \sin U | V]  
= 0$$
when $a < -1$ and 
$$\mathbb{E}_{U,V} [ (\sin U) \mathds{1}_{\sin U > a} | V]  
= 0$$
when $a > 1$. 
In other words, for any $a \in \mathbb{R}$ which is a measurable function of $V$, 
\begin{align}
\mathbb{E}_{U,V} [ (\sin U) \mathds{1}_{\sin U > a} | V] = 
\frac{\sqrt{\max(0,1 - a^2)}}{ \pi}. 
\end{align}
Applying this equality to 
$$a = \frac{c/2 - A \sin V \sin (\pi \xi_2)}{\sin(\pi \xi_1)},$$
where we notice that $\sin (\pi \xi_1) > 0$ since $\xi \in (0,1)$, and using the tower property of expectation in order to discard the conditioning in $V$, we get
\begin{align*} & \mathbb{E}_{U,V}[ (\sin U) \mathds{1}_{\sin U \sin( \pi \xi_1)  + A \sin V \sin( \pi \xi_2)> c/2} ] \\ & = \frac{1}{ \pi} \mathbb{E}_V \left[ \sqrt{\max \left(0, 1 - \left(\frac{c/2 - A \sin V \sin (\pi \xi_2)}{\sin(\pi \xi_1)} \right)^2 \right)}  \right]
\\ & 
= \frac{1}{\pi^2}
\int_{-1}^1  \sqrt{\frac{\max \left(0, 1 - \left(\frac{c/2 - A u \sin (\pi \xi_2)}{\sin(\pi \xi_1)} \right)^2 \right)}{1 - u^2}} du
\\ & = \frac{1}{\pi^2} 
\int_{\max\left(-1,\frac{c/2 - \sin(\pi \xi_1)}{A \sin (\pi \xi_2)} \right)}^{\min\left(1,\frac{c/2 + \sin(\pi \xi_1)}{A \sin (\pi \xi_2)} \right)} \, \sqrt{\frac{   1 - \left(\frac{c/2 - A u \sin (\pi \xi_2)}{\sin(\pi \xi_1)} \right)^2 }{1 - u^2}} du. 
\end{align*}
On the other hand, 

\begin{align*} & \mathbb{P}_{U,V} ( \sin(U) \sin(\pi \xi_1) 
+ A \sin(V) \sin(\pi \xi_2) > c/2) 
\\ & = \frac{1}{\pi^2}\int_{-1}^1 \int_{-1}^1 \frac{du \, dv}{\sqrt{(1 - u^2)(1-v^2)}} \mathds{1}_{u \sin (\pi \xi_1) +A v \sin (\pi \xi_2) > c/2 }. 
\end{align*}
Dividing the two quantities gives, 
when the denominator does not vanish
$$B_1 = \frac{ \int_{\max\left(-1,\frac{c/2 - \sin(\pi \xi_1)}{A \sin (\pi \xi_2)} \right)}^{\min\left(1,\frac{c/2 + \sin(\pi \xi_1)}{A \sin (\pi \xi_2)} \right)} \, \sqrt{\frac{   1 - \left(\frac{c/2 - A u \sin (\pi \xi_2)}{\sin(\pi \xi_1)} \right)^2 }{1 - u^2}} du }{\int_{-1}^1 \int_{-1}^1 \frac{du \, dv}{\sqrt{(1 - u^2)(1-v^2)}} \mathds{1}_{u \sin (\pi \xi_1) +A v \sin (\pi \xi_2) > c/2 } }. 
$$

The constant $B_2$ can be obtained as $B_1$ by exchanging  $U$ and $V$ and the factors $\sin (\pi \xi_1)$ and $A \sin (\pi \xi_2)$, which gives 
$$B_2 = \frac{ \int_{\max\left(-1,\frac{c/2 - A\sin(\pi \xi_2)}{ \sin (\pi \xi_1)} \right)}^{\min\left(1,\frac{c/2 + A\sin(\pi \xi_2)}{ \sin (\pi \xi_1)} \right)} \, \sqrt{\frac{   1 - \left(\frac{c/2 -  u \sin (\pi \xi_1)}{A\sin(\pi \xi_2)} \right)^2 }{1 - u^2}} du }{\int_{-1}^1 \int_{-1}^1 \frac{du \, dv}{\sqrt{(1 - u^2)(1-v^2)}} \mathds{1}_{u \sin (\pi \xi_1) +A v \sin (\pi \xi_2) > c/2 } }. 
$$

The expression given for $B_1$ in the case $c = 0$ is directly deduced after observing that the numerator
is an even function of $u$ in this case, and that the denominator is equal to $\pi^2/2$, i.e. 
half of the integral without the indicator function, by symmetry. The expression given 
for $B_2$ for $c = 0$ is obtained in the same way, after an additional linear change of variable.

\end{proof}
\color{black}

\subsection{Parallel result in the continuous setting}
We can adapt the analysis above to the case of a continuous signal, given by a function 
$f$ from $\mathbb{R}$ to $\mathbb{R}$ such that 
$$f(t) = \cos(2 \pi t + \varphi_1) 
+ A \cos(2 \pi \xi t + \varphi_2) $$
for some $\xi > 0$, 
$\varphi_1, \varphi_2 \in \mathbb{R}$. 
Notice that discretizing this signal by taking its values at the multiples of $\varepsilon > 0$ gives the same signal as above, for $\xi_1 = \varepsilon$ and $\xi_2 = \xi \varepsilon$. 
This hinged points in the continuous setting correspond to the points where $f'$ is positive.
For a given $s \in \mathbb{R}$, 
$T > 0$, 
the average of $f(t + s)$
on the hinge points $t$ in $[-T,T]$ is given by the ratio 
$$\frac{\int_{-T}^T f(t+s) \mathds{1}_{f'(t) > 0} dt} { \int_{-T}^T  \mathds{1}_{f'(t) > 0} dt}.$$
The numerator can be written as 
$$ 
2T \mathbb{E}_{\tau} \left[ (\cos (2 \pi (\tau + s) + \varphi_1)
+ A \cos (2 \pi \xi (\tau + s) + \varphi_2 )) 
\mathds{1}_{ \sin( 2 \pi \tau + \varphi_1) + A \xi \sin(2 \pi \xi \tau + \varphi_2) < 0} \right]\,,
$$
where $\mathbb{E}_{\tau}$ denotes the expectation under the uniform probability distribution $\mathbb{P}_{\tau}$ on $[-T, T]$, and $\tau$ is a random variable following this distribution. 
The denominator is 

$$2T \mathbb{P}_{\tau} \left(
 \sin( 2 \pi \tau + \varphi_1) + A \xi \sin(2 \pi \xi \tau + \varphi_2) < 0\right),$$
 and then the ratio is 
$$\frac{\mathbb{E}_{\tau} \left[ (\cos (U_T + 2 \pi s)
+ A \cos (V_T + 2 \pi \xi s) 
\mathds{1}_{ \sin U_T + A \xi \sin V_T < 0} \right]}{\mathbb{P}_{\tau} \left(\sin U_T + A \xi \sin V_T < 0 \right) } $$
where 
$$U_T = 2 \pi \tau + \varphi_1, \; V_T = 2 \pi \xi \tau + \varphi_2.$$
If $\xi$ is irrational, the expectation 
converges to 
$$\mathbb{E}_{U,V} [(\cos (U + 2 \pi s) + A \cos (V+ 2 \pi \xi s)) \mathds{1}_{\sin U + A \xi \sin V < 0}]$$
when $T \rightarrow \infty$, $U$ and $V$ being independent uniform variables on $[0, 2 \pi)$, 
$\mathbb{E}_{U,V}$ being the expectation under the uniform distribution $\mathbb{P}_{U,V}$ on $[0, 2 \pi)^2$. 
Indeed, $(U_T, V_T)$ modulo $2 \pi$ converges to $(U,V)$ in distribution: since $\xi$ is irrational, for $(a,b) \in \mathbb{Z}^2 \backslash \{(0,0)\}$, 
$a + b \xi \neq 0$, and then 
\begin{align*}
\mathbb{E}_{\tau}[ e^{i (a U_T + b V_T)}]
& = \frac{1}{2T} e^{i (a \varphi_1 + b \varphi_2)}
\int_{-T}^T e^{2 i \pi (a + b \xi) t} dt
\\ & = \frac{1}{2T} e^{i (a \varphi_1 + b \varphi_2)} \left( \frac{e^{2 i \pi (a + b \xi) T}
-e^{-2 i \pi (a + b \xi) T} }{2 i \pi (a + b \xi)} \right)
\end{align*}
is bounded by $1/(2T \pi |a + b \xi|)$, which tends to zero when $T \rightarrow \infty$. 
We also have 
$$\mathbb{P}_{\tau} \left(\sin U_T + A \xi \sin V_T < 0 \right)  \underset{T \rightarrow \infty}{\longrightarrow} \mathbb{P}_{U,V} \left(\sin U  + A \xi \sin V  < 0 \right) = \frac{1}{2}.$$
Hence, 

$$\frac{\int_{-T}^T f(t+s) \mathds{1}_{f'(t) > 0} dt} { \int_{-T}^T  \mathds{1}_{f'(t) > 0} dt}
\underset{T \rightarrow \infty}{\longrightarrow} 
2 \, \mathbb{E}_{U,V} [(\cos (U + 2 \pi s) + A \cos (V+ 2 \pi \xi s)) \mathds{1}_{\sin U + A \xi \sin V < 0}]. 
$$

We get, expanding the cosines and noticing that 
$$\mathbb{E}_{U,V} [ (\cos U) \mathds{1}_{\sin U + A \xi \sin V < 0}] = \mathbb{E}_{U,V} [ (\cos V) \mathds{1}_{\sin U + A \xi \sin V < 0}] =0  $$
 by the symmetry $U \mapsto \pi - U$, $V \mapsto \pi - V$, we get
 \begin{align}
 B_1 \sin (2 \pi s) + B_2 \sin (2 \pi \xi s) 
 \end{align}
 for 
 $$B_1 = - 2 \mathbb{E}_{U,V} [ (\sin U) \mathds{1}_{\sin U + A \xi \sin V < 0}] =  2 \mathbb{E}_{U,V} [ (\sin U) \mathds{1}_{\sin U + A \xi \sin V >  0}] $$
 and 
 $$ B_2 = 2 A \mathbb{E}_{U,V} [ (\sin V) \mathds{1}_{\sin U + A \xi \sin V >  0}].$$

The end of the computation is then exactly similar to the discrete setting, and we again get 
\begin{align*}
B_1 = \frac{4}{\pi^2} \int_0^{\min (1, 1/(A \xi))}
\sqrt{ \frac{ 1 - A^2 \xi^2 u^2}{1 - u^2}} du.
\end{align*}
and 
 
\begin{align*}
B_2 = \frac{4 A^2 \xi}{\pi^2} \int_0^{\min (1, 1/(A \xi))}
\sqrt{ \frac{ 1 -  u^2}{1 - A^2 \xi^2  u^2}} du.
\end{align*}
This computation can be compared to 
the computation corresponding to the same signal, descretized by considering its values at multiples of $\varepsilon > 0$. We get $\xi_1 = \varepsilon$, $\xi_2 = \xi \varepsilon$: these frequencies are in $(0,1)$ if $\varepsilon < \min(1,1/\xi)$. The value of $\xi$ involved in the computation of $B_1$ and $B_2$ in the discrete case is 
$$\frac{\sin (\pi \xi_2)}{\sin (\pi \xi_1)} = 
\frac{\sin ( \pi \xi \varepsilon)}{\sin (\pi \varepsilon)},$$
which tends to $\xi$ when $\varepsilon$ tends to zero. A shift of $\ell$ in the discrete setting 
corresponds to a shift of $s = \ell \varepsilon$ in the continuous setting, and 
$$\sin(2 \pi s) = \sin (2 \pi \ell \varepsilon)
=\sin (2 \pi \ell \xi_1), $$
$$\sin(2 \pi \xi s) = \sin (2 \pi \ell \xi \varepsilon)
=\sin (2 \pi \ell \xi_2),$$
whereas the coefficients in front of $B_1$ and $B_2$ in the discrete setting are
$\sin ( \pi (2 \ell + 1) \xi_1)$ and $\sin ( \pi (2  \ell + 1) \xi_2)$. The difference between the continuous and the discrete settings for these coefficients is then dominated by $\max(\xi_1, \xi_2) \leq \varepsilon (1 + \xi)$, which tends to zero with $\varepsilon$.

\begin{rem}
While we do not need this result, it is an interesting and immediate extension of the idea of Weyl's equilibrium lemma to count the number of zeros or extrema over a sufficiently long period, which might be helpful to analyze other algorithms like empirical mode decomposition \cite{Rilling2008}. Consider the two harmonic model $f(t) = \cos (2 \pi t + \varphi_1) + A \cos (2 \pi \xi t + \varphi_2)$ for $A > 0$, $\xi \in (0,1)$, $\varphi_1, \varphi_2 \in [0, 2\pi)$. We assume $\xi$ irrational to simplify the discussion. By the same argument as above,
the number of zeros of $f$ between $0$ and $T$, when $T$ is sufficiently large, can be well approximated by  
$$2 \pi T \mathbb{E}_{U,V} [ \delta_0 (\cos  U + A \cos V) | \sin U + A \xi \sin V|]\,,$$
where $\delta_0$ is the Dirac measure. Here, we abuse the notation and skip details, while the computation can be rigorously justified like that in the discrete setup. By the symmetry $V \mapsto -V$, we get  
\begin{align}
&\pi T \mathbb{E}_{U,V} [ \delta_0 (\cos  U + A \cos V) (| \sin U + A \xi \sin V| + | \sin U - A \xi \sin V|) ]\nonumber\\
=\,&2 \pi T \mathbb{E}_{U,V} [ \delta_{-A \cos V} (\cos  U ) \max( \sqrt{1 - A^2 \cos^2 V}, A \xi \sqrt{1 - \cos^2 V})]\label{remark counting zeros eq1}\,,
\end{align}
where the equality holds since $|a + b| + |a-b| = 2 \max (|a|, |b|)$ for $a, b \in \mathbb{R}$, and $|\sin U| = \sqrt{1 - \cos^2 U}  =\sqrt{1 - A^2 \cos^2 V}$. Note that $\cos U$ is never $a\in \mathbb{R}$ if $|a| > 1$, and for $|a| < 1$, $\cos U$ takes the value $a$ twice per period of $2 \pi$. Approximating the Dirac measure and using change of variable formula, we can derive 
\begin{equation}\label{remark counting zeros eq2}
\mathbb{E}_{U,V}[ \delta_a (\cos U)] = \frac{1}{\pi \sqrt{1 - a^2}}\,.
\end{equation}
With \eqref{remark counting zeros eq2} and the tower principle, when conditioning on $V$, \eqref{remark counting zeros eq1} becomes
$$
2 T \mathbb{E}_{V} \left[ \max \left(1, A \xi \sqrt{\frac{1 - \cos^2 V}{ 1 - A^2 \cos^2 V}} \right) \mathds{1}_{A |\cos V| < 1} \right].
$$
Taking into account the density of the distribution of  $|\cos V|$ where $V$ is uniform on $[0, 2 \pi)$, we get 
$$
\frac{4 T}{\pi} \int_0^{\min (1, 1/A)}  \max \left( \frac{1}{\sqrt{1 - u^2}}, \frac{A \xi}{\sqrt{1 - A^2 u^2}} \right) du \,.
$$
When $A \leq 1$, the maximum is obtained  with the first expression, which gives 
$$\frac{4 T}{ \pi} \int_0^1 \frac{du}{\sqrt{1 - u^2}} = 2T, $$
i.e. an average of two zeros per unit of time, i.e.  per period of $\cos (2 \pi t + \varphi_1)$. When $A \xi \geq 1$, and then necessarily $A \geq 1$, the second expression gives the maximum, which gives 
$$\frac{4 T}{ \pi} \int_0^{1/A} A \xi \frac{du}{\sqrt{1 - A^2 u^2}}=\frac{4 T}{ \pi} \int_0^{1} A \xi \frac{dv/A}{\sqrt{1 - v^2}} = 2 T \xi, $$
i.e. two zeros per period of $\cos (2 \pi \xi t + \varphi_2)$. In the intermediate regime $1 \leq A \leq 1/\xi$, we have to consider the sign of 
$$1 - A^2 u^2 - A^2 \xi^2 (1 - u^2)= 1 - A^2 \xi^2 - A^2 u^2 (1 - \xi^2),$$
decreasing in $u$, vanishing at $A^{-1}\sqrt{(1 - A^2 \xi^2) / ( 1- \xi^2)}$. 
Hence, the estimate of the number of zeros of
$$\frac{4 T}{\pi} \left(\int_0^{A^{-1}\sqrt{(1 - A^2 \xi^2) / ( 1- \xi^2)}}\frac{du}{\sqrt{1 - u^2}} + \int_{A^{-1}\sqrt{(1 - A^2 \xi^2) / ( 1- \xi^2)}}^{A^{-1}} \frac{A \xi du}{\sqrt{1 - A^2 u^2}} \right)$$
i.e. 
$$\frac{4 T}{\pi} \left(\int_0^{A^{-1}\sqrt{(1 - A^2 \xi^2) / ( 1- \xi^2)}}\frac{du}{\sqrt{1 - u^2}} + \xi \int_{\sqrt{(1 - A^2 \xi^2) / ( 1- \xi^2)}}^{1} \frac{  dv}{\sqrt{1 - v^2}} \right)$$
which is
$$\frac{4 T}{ \pi} \left( \arcsin (A^{-1}\sqrt{(1 - A^2 \xi^2) / ( 1- \xi^2)}) + \xi \arccos (\sqrt{(1 - A^2 \xi^2) / ( 1- \xi^2)}) \right)$$
For the number of extremas of $f$, we consider the zeros of $f'$ and we get the same results with $A \xi$ replacing $A$. 
\end{rem}

\subsection{Numerical simulation}
We compare the PRSA output with the theoretical prediction on simulated deterministic signals that follow the two-harmonics model. Specifically, we generate $8\times 10^6$ samples at a sampling rate of $200$ Hz from $f(t)=\cos(2\pi t)+A\cos(2\pi\xi t+\phi)$, where $A>0$, $\xi\in (0,1)$ and $\phi\in[0,2\pi)$. We set $L=1000$, corresponding to a total duration of 10 seconds for each $z_{n,L}$. Figure~\ref{fig:PRSA1deterministic} compares the empirical PRSA output $z_{n,L}$ with the theoretical prediction across different values of $A$ and $\xi$, {consistent with the theoretical analysis.

Figure~\ref{fig:PRSA1deterministic2} illustrates the dependence of the PRSA output $z_{n,L}$ on the parameter $c$. We generate $8\times 10^6$ samples at a sampling rate of $10$ Hz from $f(t)=\cos(2\pi t)+0.7\cos(2\pi\times 0.2 t+\phi)$, where $\phi\in[0,2\pi)$ is randomly selected. We set $L=1000$, corresponding to a total duration of 200 seconds for each $z_{n,L}$. The results clearly demonstrate that the PRSA output varies with $c$.}

We emphasize that, even in this simple and noise-free two-harmonic model, the analysis of quasi-periodicity could be tricky. The PRSA output does not fully reflect the true structure of the underlying two-harmonic signal. Specifically, the quasi-periodic behavior of PRSA output can be different from that of the input; for example, the relative phase of two harmonic components is gone, and the relative amplitude of two harmonics is perturbed. This observation reinforces the importance of critically interpreting PRSA outputs, particularly when dealing with complex or nonstationary dynamics.

\begin{figure}[!hbt]
\centering
\includegraphics[trim=0 0 0 0, clip, width=\textwidth]{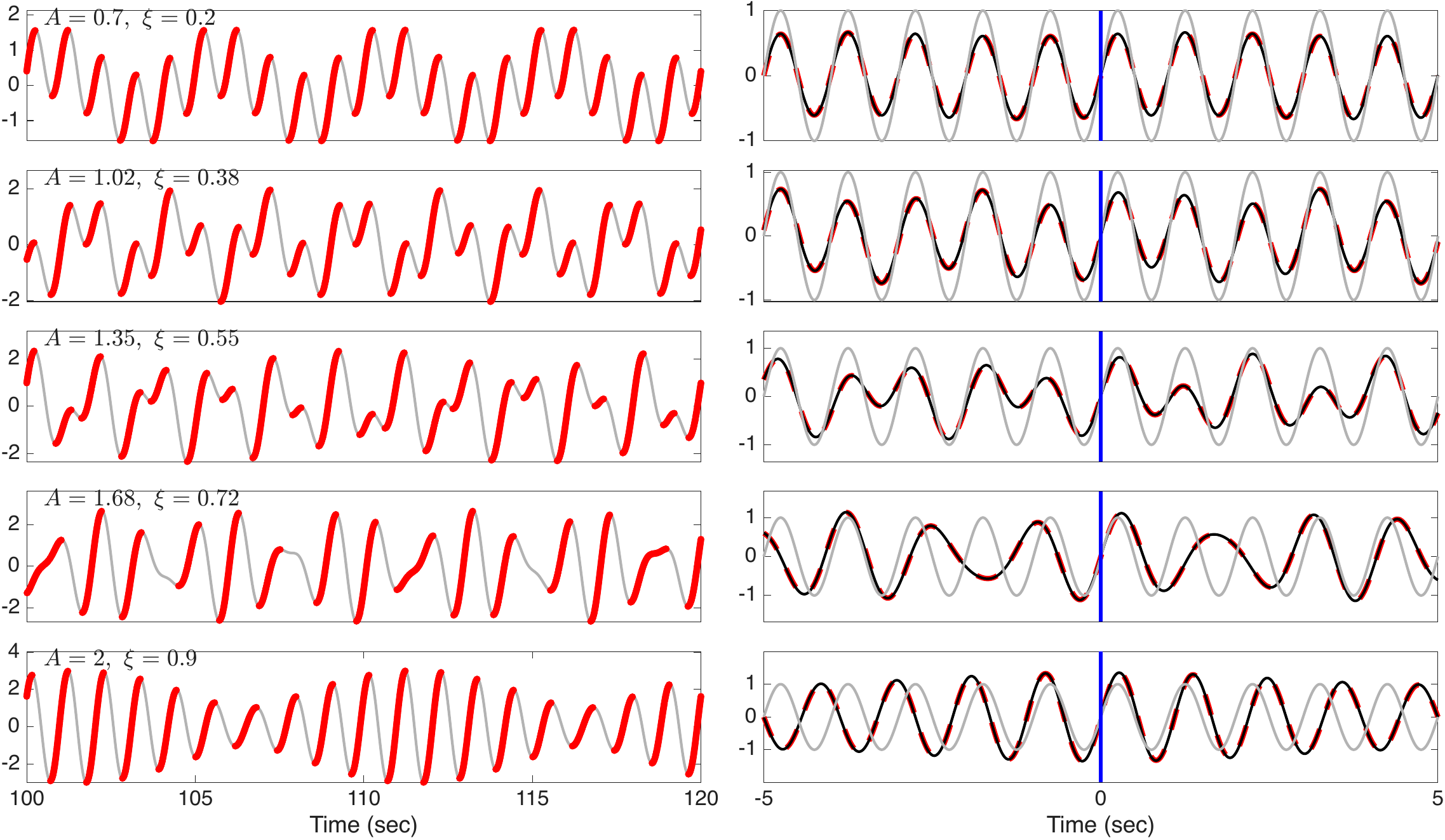} 
\caption{\label{fig:PRSA1deterministic} Illustration of PRSA with $c=0$ of deterministic signals sampled at 200 Hz  satisfying the two harmonics model with different $(A,\xi)$ pairs. Left column: a 20 seconds segment of the signal is shown in gray, with the hinge points marked in red. The $A$ and $\xi$ are shown. Right column: the low frequency component is shown in gray, the PRSA result, $z_{n,L}$ is shown in black, and the theoretically predicted output is superimposed as the dashed red curve. The vertical blue line indicated the middle point of $z_{n,L}$.}
\end{figure}

\begin{figure}[!hbt]
\centering
\includegraphics[trim=0 0 0 0, clip, width=\textwidth]{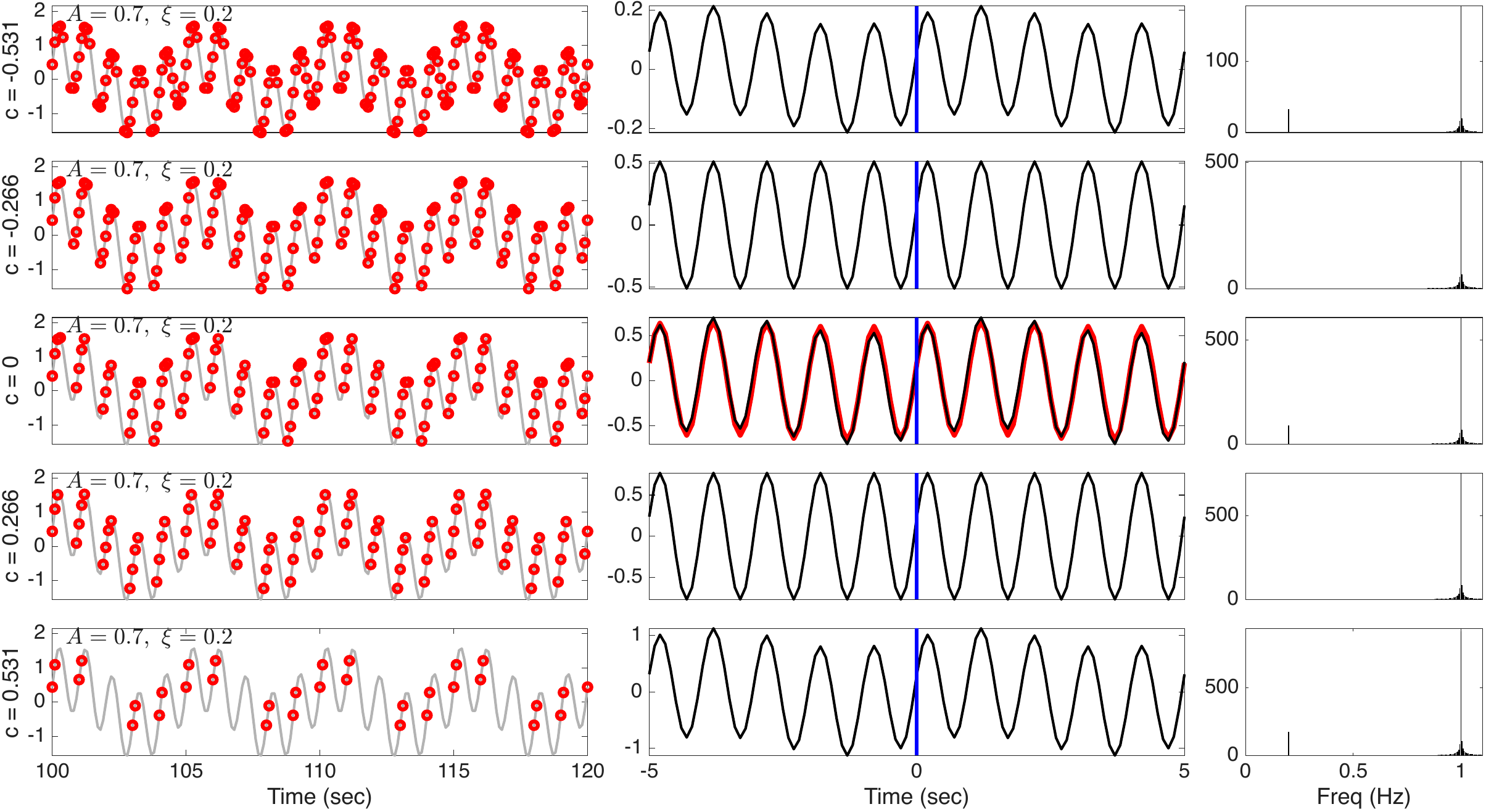} 
\caption{\label{fig:PRSA1deterministic2} Illustration of PRSA with different $c\in \mathbb{R}$ of deterministic signals $f(t)=\cos(2\pi t)+0.7\cos(2\pi\times 0.2 t+\phi)$, where $\phi\in [0,2\pi)$, sampled at 10 Hz. Left column: a 20 seconds segment of the signal is shown in gray, with the hinge points marked in red. Middle column: the PRSA result, $z_{n,L}$ is shown in black, and the theoretically predicted output is superimposed as the dashed red curve. The vertical blue line indicated the middle point of $z_{n,L}$. Right column: the magnitude of the Fourier transform of $z_{n,L}$ to indicate how the PRSA output depends on $c$.}
\end{figure}

\section{PRSA for Stochastic Processes}\label{section PRSA sto}

\subsection{Preliminaries}

To analyze the PRSA output in a stochastic setting, we begin by reformulating the average cycle \( z_{n,L} \) for a general time series \( (x_i)_{i \in \mathbb{Z}} \). This reformulation will serve as the foundation for subsequent probabilistic analysis.

\begin{lem}\label{lemma 4.1}
Let \( (x_i)_{i \in \mathbb{Z}} \) be a real-valued time series, and define the finite-difference sequence \( w_i := x_i - x_{i-1} \). Let the set of hinge points be determined by a fixed threshold \( c \in \mathbb{R} \), i.e., hinge points are indices \( i \) such that \( w_i > c \).

Consider the PRSA-averaged cycle \( z_{n,L} = (_{n,\ell})_{-L \leq \ell \leq L} \in \mathbb{R}^{2L+1} \). Then for each \( \ell \in [-L, L] \), we have
\[
z_{n,L}(\ell) = x_0 + \sum_{p=1}^{n+\ell} w_p \cdot \frac{H(p - \ell - 1, n)}{H(-n - 1, n)} 
- \sum_{p = -n+\ell+1}^{0} w_p \cdot \frac{H(-n - 1, p - \ell - 1)}{H(-n - 1, n)},
\]
where \( H(a, b) \) denotes the number of hinge points with indices in the interval \( (a, b] \).
\end{lem}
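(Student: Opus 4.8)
The plan is to prove the identity by a discrete Fubini argument (equivalently, Abel summation): expand each sampled value $x_{m+\ell}$ as a telescoping sum of increments anchored at $x_0$, interchange the two finite sums, and recognize the resulting inner sums as counts of hinge points. Throughout I assume, as is implicit in the statement, that $n \geq L$ (so the window $[-n,n]$ contains all the shifted indices that occur) and that $H(-n-1,n)\neq 0$.

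First I would record that, by the definition of the PRSA average and of the hinge set, $z_{n,L}(\ell) = \dfrac{\sum_{m=-n}^{n} x_{m+\ell}\,\mathds{1}_{w_m>c}}{\sum_{m=-n}^{n} \mathds{1}_{w_m>c}}$, and that the denominator is exactly $H(-n-1,n)$. Splitting off the constant term via $x_{m+\ell} = x_0 + (x_{m+\ell}-x_0)$, it suffices to show
\[
\sum_{m=-n}^{n} (x_{m+\ell}-x_0)\,\mathds{1}_{w_m>c} \;=\; \sum_{p=1}^{n+\ell} w_p\,H(p-\ell-1,n) \;-\; \sum_{p=-n+\ell+1}^{0} w_p\,H(-n-1,p-\ell-1),
\]
since dividing this by $H(-n-1,n)$ and adding back $x_0$ recovers the claimed formula.

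Next, for any integer $k$ I would use the telescoping identity $x_k - x_0 = \sum_{p\in\mathbb{Z}} w_p\,\sigma(p,k)$ with $\sigma(p,k) := \mathds{1}_{1\le p\le k} - \mathds{1}_{k<p\le 0}$, which unwinds to $x_k - x_0 = \sum_{p=1}^{k} w_p$ for $k\ge 0$ and to $x_k - x_0 = -\sum_{p=k+1}^{0} w_p$ for $k<0$. Substituting $k=m+\ell$, multiplying by $\mathds{1}_{w_m>c}$, summing over $-n\le m\le n$, and swapping the two finite sums gives $\sum_{m=-n}^{n}(x_{m+\ell}-x_0)\,\mathds{1}_{w_m>c} = \sum_{p\in\mathbb{Z}} w_p \sum_{m=-n}^{n} \mathds{1}_{w_m>c}\,\sigma(p,m+\ell)$. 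Because $m+\ell$ ranges over $[-n+\ell,\,n+\ell]$, only $p\in\{1,\dots,n+\ell\}$ (where $\sigma(p,m+\ell)=\mathds{1}_{m\ge p-\ell}$) and $p\in\{-n+\ell+1,\dots,0\}$ (where $\sigma(p,m+\ell)=-\mathds{1}_{m\le p-\ell-1}$) contribute.

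Finally I would observe that, for $n\ge L$, the $\min$/$\max$ implicit in the contributing range of $m$ collapse to the stated endpoints: in the first family the relevant $m$ are exactly those with $p-\ell\le m\le n$, and in the second exactly those with $-n\le m\le p-\ell-1$. By the definition of $H$ these inner sums of $\mathds{1}_{w_m>c}$ are precisely $H(p-\ell-1,n)$ and $H(-n-1,p-\ell-1)$, which yields the displayed identity and hence the lemma. The only real work is the index bookkeeping — keeping the sign of $\sigma(p,m+\ell)$ correct according to whether $m+\ell$ is nonnegative, and verifying that the truncation endpoints collapse once $n\ge L$; there is no analytic difficulty, so this is the step I would take most care to write out cleanly.
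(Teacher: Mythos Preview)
Your proposal is correct and follows essentially the same route as the paper: write $x_{m+\ell}-x_0$ as a signed telescoping sum of the increments $w_p$, interchange the two finite sums, and recognize the inner sum over $m$ as a hinge-point count $H(\cdot,\cdot)$. The paper splits explicitly into the cases $m+\ell\ge 0$ and $m+\ell<0$ before swapping sums, whereas you package both cases into the single sign function $\sigma(p,k)$; this is purely a notational difference, and your endpoint checks (using $n\ge L$ to ensure $p-\ell\ge -n$ in the first family and $p-\ell-1\le n$ in the second) match exactly what the paper's argument needs.
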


\begin{proof}

We have: 
$$z_{n,L}(\ell) = \frac{1}{\sum_{m = -n}^n \mathds{1}_{\{w_m > c\}}} \sum_{m = -n}^n x_{m+\ell} \mathds{1}_{\{w_m > c\}}.
$$
This expression reflects a weighted average of the values \( x_{m+\ell} \), where \( m \) runs over indices whose increment \( w_m \) exceeds the threshold \( c \).

We now express each \( x_{m+\ell} \) in terms of the base point \( x_0 \) and the increments \( w_p \). When \( m + \ell \geq 0 \), we write
\[
x_{m+\ell} = x_0 + \sum_{p=1}^{m+\ell} w_p.
\]
When \( m + \ell < 0 \), we write
\[
x_{m+\ell} = x_0 - \sum_{p = m+\ell+1}^{0} w_p.
\]

We split the full sum into two parts: the positive index part (where \( m + \ell \geq 0 \)) and the negative index part (where \( m + \ell < 0 \)).

{\bf Positive part:}
We have 
\[
\sum_{-n \leq m \leq n, m+\ell \geq 0} x_{m+\ell} \mathds{1}_{\{w_m > c\}} = \sum_{-n \leq m \leq n, m+\ell \geq 0} \left(x_0 + \sum_{p=1}^{m+\ell} w_p \right) \mathds{1}_{\{w_m > c\}}.
\]
We separate the constant term and switch the order of summation in the second term:
\[
\sum_{-n \leq m \leq n, m+\ell \geq 0} x_{m+\ell} \mathds{1}_{\{w_m > c\}} = x_0 \sum_{-n \leq m \leq n, m + \ell \geq 0} \mathds{1}_{\{w_m > c\}} + \sum_{p = 1}^{n+\ell} w_p \sum_{\substack{m = -n \\ m + \ell \geq p}}^{n} \mathds{1}_{\{w_m > c\}}.
\]
Observe that the inner sum counts how many hinge points \( w_m > c \) correspond to \( m \in [p - \ell, n] \). Thus, this count equals \( H(p - \ell - 1, n) \).

{\bf Negative part:}
For \( m \) such that \( m + \ell < 0 \), we have
\[
x_{m+\ell} = x_0 - \sum_{p = m+\ell+1}^{0} w_p.
\]
Proceeding as before, we obtain
\begin{align*}
\sum_{-n \leq m \leq n, m + \ell < 0} x_{m+\ell} \mathds{1}_{\{w_m > c\}} & = x_0 \sum_{-n \leq m \leq n, m + \ell < 0 } \mathds{1}_{\{w_m > c\}} \\ & - \sum_{p = -n+\ell+1}^{0} w_p \sum_{\substack{m = -n \\ m + \ell < p}}^{n} \mathds{1}_{\{w_m > c\}}.
\end{align*}
The inner sum counts the hinge points in the interval \( (-n - 1, p - \ell - 1] \), which equals \( H(-n - 1, p - \ell - 1) \).

{\bf Putting everything together:}
Adding both contributions and normalizing by the total number of hinge points \( H(-n - 1, n) \), we obtain
\[
z_{n,L} (\ell)= x_0 + \sum_{p = 1}^{n + \ell} w_p \cdot \frac{H(p - \ell - 1, n)}{H(-n - 1, n)} - \sum_{p = -n + \ell + 1}^{0} w_p \cdot \frac{H(-n - 1, p - \ell - 1)}{H(-n - 1, n)},
\]
which is the desired result.
\end{proof}

\begin{rem}
The expression in Lemma \ref{lemma 4.1} rewrites each coordinate \( z_{n,L}(\ell) \) of the PRSA-averaged signal as a weighted sum of the increments \( w_p = x_p - x_{p-1} \), centered at \( x_0 \). These weights are determined by the number of hinge points occurring in specific intervals, reflecting how the hinge-point structure influences the contribution of the increments. This reformulation is essential for our subsequent analysis: it enables us to apply probabilistic tools to study the asymptotic behavior of PRSA when the input signal is modeled as a stochastic process.
\end{rem}

The main technical challenge in analyzing the variance and higher-order moments of the PRSA output is to understand how the dependence between distant components of the time series propagates through nonlinear functionals, such as \( x_{m+\ell} \mathds{1}_{\{w_m > c\}} \). The following result provides a quantitative bound on the covariance between functionals of jointly Gaussian vectors in terms of their smoothness and coordinate-level correlations.

\begin{prop}\label{proposition approximation bound}
Let \( X_1 \in \mathbb{R}^{d_1} \) and \( X_2 \in \mathbb{R}^{d_2} \) be centered Gaussian random vectors such that the joint vector \( (X_1, X_2) \) is also Gaussian. Let \( \Phi_1 : \mathbb{R}^{d_1} \to \mathbb{R} \) and \( \Phi_2 : \mathbb{R}^{d_2} \to \mathbb{R} \) be continuously differentiable functions.

Define:
\begin{itemize}
  \item \( M := \max\{ \operatorname{Var}((X_1)_j), \operatorname{Var}((X_2)_k) : 1 \leq j \leq d_1,\, 1 \leq k \leq d_2 \} \),
  \item \( \mu := \max\{ |\operatorname{Cov}((X_1)_j, (X_2)_k)| : 1 \leq j \leq d_1,\, 1 \leq k \leq d_2 \} \),
  \item \( N(\Phi_s) := |\Phi_s(0)| + \sup_{x \in \mathbb{R}^{d_s}} \| \nabla \Phi_s(x) \| \), for \( s = 1,2 \).
\end{itemize}

Then the covariance between \( \Phi_1(X_1) \) and \( \Phi_2(X_2) \) satisfies
\[
\left| \mathbb{E}[\Phi_1(X_1)\Phi_2(X_2)] - \mathbb{E}[\Phi_1(X_1)]\mathbb{E}[\Phi_2(X_2)] \right|
\leq 7  d_1 d_2\, N(\Phi_1)\, N(\Phi_2)\, \sqrt{ \mu(1 + M) }.
\]
\end{prop}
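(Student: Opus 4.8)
The plan is to estimate the covariance by a \emph{synchronous Gaussian coupling} that replaces $(X_1,X_2)$ by a Gaussian vector with the same marginal blocks but with the two blocks made independent. Write $\Sigma=\begin{pmatrix}\Sigma_{11}&\Sigma_{12}\\ \Sigma_{21}&\Sigma_{22}\end{pmatrix}$ for the joint covariance of $(X_1,X_2)$ and $\Sigma_0=\begin{pmatrix}\Sigma_{11}&0\\ 0&\Sigma_{22}\end{pmatrix}$ for its block-diagonal part; both are positive semidefinite. Let $g$ be a standard Gaussian vector in $\mathbb{R}^{d_1+d_2}$ and put $Z=(Z_1,Z_2):=\Sigma^{1/2}g$ and $Z'=(Z_1',Z_2'):=\Sigma_0^{1/2}g$. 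Then $Z$ has the law of $(X_1,X_2)$, while $\Sigma_0^{1/2}$ being block diagonal forces $Z_1'\perp Z_2'$ with $Z_1'\sim X_1$ and $Z_2'\sim X_2$, so that $\mathbb{E}[\Phi_1(Z_1')\Phi_2(Z_2')]=\mathbb{E}[\Phi_1(X_1)]\,\mathbb{E}[\Phi_2(X_2)]$. Hence the quantity to bound equals $\bigl|\mathbb{E}[\Phi_1(Z_1)\Phi_2(Z_2)-\Phi_1(Z_1')\Phi_2(Z_2')]\bigr|$; and since $N(\Phi_s)<\infty$ forces $|\Phi_s(x)|\le N(\Phi_s)(1+\|x\|)$, all the expectations below are finite against the Gaussian tails of $Z,Z'$.

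The crux is to control $\mathbb{E}\|Z-Z'\|^2=\mathbb{E}\bigl\|(\Sigma^{1/2}-\Sigma_0^{1/2})g\bigr\|^2=\|\Sigma^{1/2}-\Sigma_0^{1/2}\|_F^2$, and I expect this to be the \emph{main obstacle}: the more obvious coupling built from the conditional-mean decomposition $X_2=\Sigma_{21}\Sigma_{11}^{-1}X_1+G$ produces a factor $\lambda_{\min}(\Sigma_{11})^{-1}$ that ruins the estimate. The remedy is the Powers--St{\o}rmer inequality for positive semidefinite matrices, $\|\Sigma^{1/2}-\Sigma_0^{1/2}\|_F^2\le\|\Sigma-\Sigma_0\|_{\mathrm{tr}}$ (with $\|\cdot\|_{\mathrm{tr}}$ the trace/nuclear norm), which carries no spectral-gap loss. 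Since $\Sigma-\Sigma_0=\begin{pmatrix}0&\Sigma_{12}\\ \Sigma_{21}&0\end{pmatrix}$ is block anti-diagonal, its nonzero singular values are those of $\Sigma_{12}$ each counted twice, whence $\|\Sigma-\Sigma_0\|_{\mathrm{tr}}=2\|\Sigma_{12}\|_{\mathrm{tr}}\le 2\sqrt{\min(d_1,d_2)}\,\|\Sigma_{12}\|_F\le 2d_1d_2\,\mu$ (using $\|\Sigma_{12}\|_F\le\sqrt{d_1d_2}\,\mu$). Therefore $\mathbb{E}\|Z-Z'\|^2\le 2d_1d_2\,\mu$.

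The remainder is routine. Decompose $\Phi_1(Z_1)\Phi_2(Z_2)-\Phi_1(Z_1')\Phi_2(Z_2')=\bigl[\Phi_1(Z_1)-\Phi_1(Z_1')\bigr]\Phi_2(Z_2)+\Phi_1(Z_1')\bigl[\Phi_2(Z_2)-\Phi_2(Z_2')\bigr]$, apply the Lipschitz bound $|\Phi_s(x)-\Phi_s(y)|\le N(\Phi_s)\|x-y\|$ to the brackets and the linear-growth bound to the surviving factors, and use Cauchy--Schwarz together with $\|Z_s-Z_s'\|\le\|Z-Z'\|$ and $\mathbb{E}\|Z_s\|^2=\operatorname{tr}\Sigma_{ss}\le d_sM$, which gives
\[
\bigl|\mathbb{E}[\Phi_1(Z_1)\Phi_2(Z_2)-\Phi_1(Z_1')\Phi_2(Z_2')]\bigr|\le N(\Phi_1)N(\Phi_2)\bigl(\mathbb{E}\|Z-Z'\|^2\bigr)^{1/2}\bigl(2+\sqrt{d_1M}+\sqrt{d_2M}\bigr).
\]
Finally $2+\sqrt{d_1M}+\sqrt{d_2M}\le 2\sqrt2\,\sqrt{d_1d_2}\,\sqrt{1+M}$ and $\bigl(\mathbb{E}\|Z-Z'\|^2\bigr)^{1/2}\le\sqrt{2d_1d_2\mu}$ combine to the bound $4\,d_1d_2\,N(\Phi_1)N(\Phi_2)\sqrt{\mu(1+M)}$, comfortably below the claimed $7\,d_1d_2\,N(\Phi_1)N(\Phi_2)\sqrt{\mu(1+M)}$; the slack in the constant absorbs any coarser bookkeeping along the way.

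For completeness I would note an alternative proof via the Gaussian interpolation (Price) formula along the path $\Sigma(\rho)=\rho\Sigma+(1-\rho)\Sigma_0$, $\rho\in[0,1]$: after mollifying $\Phi_1,\Phi_2$ so that the mixed second derivative $\partial_{z_{1,i}}\partial_{z_{2,j}}(\Phi_1\otimes\Phi_2)=\partial_i\Phi_1\cdot\partial_j\Phi_2$ may be used legitimately, differentiating $\rho\mapsto\mathbb{E}[(\Phi_1\otimes\Phi_2)(\Sigma(\rho)^{1/2}g)]$ and integrating back yields the sharper estimate $d_1d_2\,\mu\,N(\Phi_1)N(\Phi_2)$; since $\mu\le M\le 1+M$ gives $\mu\le\sqrt{\mu(1+M)}$, this already implies the statement, but the coupling argument above reproduces the stated form most directly.
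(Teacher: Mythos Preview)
Your proof is correct and takes a genuinely different route from the paper's. The paper does not use a synchronous coupling via $\Sigma^{1/2}$ and $\Sigma_0^{1/2}$; instead it introduces auxiliary independent standard Gaussians $N_1,N_2$, sets $\widetilde X_s:=X_s+\sqrt{d_s\mu}\,N_s$, and observes that the block matrix
\[
A=\begin{pmatrix}\mu d_1 I_{d_1}&C\\ C^T&\mu d_2 I_{d_2}\end{pmatrix}
\]
is positive semidefinite, so that $(\widetilde X_1,\widetilde X_2)$ has the same law as $(X_1',X_2')+Z$ with $X_1'\perp X_2'$ and $Z\sim\mathcal N(0,A)$. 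The covariance is then written as a telescoping sum of four expectation differences, each bounded by the Lipschitz/linear-growth estimates and Cauchy--Schwarz, yielding the constant $4+2\sqrt2<7$.

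Your coupling is cleaner: it needs only one comparison instead of two, and the Powers--St{\o}rmer inequality does all the work of avoiding any $\lambda_{\min}(\Sigma_{11})^{-1}$ loss, producing the sharper constant $4$. The price is that you invoke a nontrivial operator inequality, whereas the paper's argument is completely elementary (the positivity of $A$ is a two-line computation). Your aside on the interpolation/Price route is also correct and gives the even better bound $d_1d_2\,\mu\,N(\Phi_1)N(\Phi_2)$, at the cost of a mollification step to justify differentiating under the Gaussian expectation; the paper does not pursue this either.
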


\begin{proof}
We define
\[
C_{jk} := \operatorname{Cov}((X_1)_j, (X_2)_k), \quad 1 \leq j \leq d_1,\ 1 \leq k \leq d_2,
\]
which implies \( \mu = \max_{j,k} |C_{jk}| \). 

We construct a block matrix
\[
A :=
\begin{pmatrix}
 \mu d_1 I_{d_1} & C \\
C^T & \mu  d_2 I_{d_2}
\end{pmatrix},
\]
and claim that \( A \) is positive semidefinite. To see this, take arbitrary vectors \( Y_1 \in \mathbb{R}^{d_1} \), \( Y_2 \in \mathbb{R}^{d_2} \). Then:
\begin{align*}
\begin{pmatrix} Y_1^T & Y_2^T \end{pmatrix}
A
\begin{pmatrix} Y_1 \\ Y_2 \end{pmatrix}
&= \mu  d_1 \|Y_1\|^2 + \mu d_2 \|Y_2\|^2 + 2 Y_1^T C Y_2 \\
&\geq \mu \left( d_1 \|Y_1\|^2 +  d_2 \|Y_2\|^2 - 2 \sqrt{d_1 d_2} \|Y_1\| \|Y_2\| \right) \\
&\geq 0,
\end{align*}
since
$$|Y_1^T C Y_2| 
= \left| \sum_{j=1}^{d_1} \sum_{k=1}^{d_2}
(Y_1)_j (Y_2)_k C_{jk} \right| \leq 
\mu  \sum_{j=1}^{d_1} 
\sum_{k=1}^{d_2} |(Y_1)_j || (Y_2)_k| 
= \mu \sum_{j=1}^{d_1}  |(Y_1)_j |
\sum_{k=1}^{d_2}| (Y_2)_k|
$$
and then,
by Cauchy-Schwarz inequality, 
$$|Y_1^T C Y_2|  \leq 
\mu \sqrt{d_1 d_2} \|Y_1\| \|Y_2\|. 
$$

Now, let \( N_1 \sim \mathcal{N}(0, I_{d_1}) \), \( N_2 \sim \mathcal{N}(0, I_{d_2}) \) be independent standard Gaussian vectors, also independent of \( (X_1, X_2) \). Define the ``regularized" variables:
\[
\widetilde{X}_1 := X_1 + \sqrt{d_1 \mu}\, N_1, \quad \widetilde{X}_2 := X_2 + \sqrt{d_2 \mu}\, N_2.
\]
The joint covariance matrix of \( (\widetilde{X}_1, \widetilde{X}_2) \) is:
\begin{align*}
\operatorname{Cov}(\widetilde{X}_1, \widetilde{X}_2) &  =
\begin{pmatrix}
\operatorname{Cov}(X_1) + d_1 \mu I_{d_1} & C \\
C^T & \operatorname{Cov}(X_2) + d_2 \mu I_{d_2}
\end{pmatrix}
\\ & = \begin{pmatrix}
\operatorname{Cov}(X_1)  & 0 \\
0 & \operatorname{Cov}(X_2) 
\end{pmatrix} + A.
\end{align*}

Let \( Z \sim \mathcal{N}(0, A) \), \(X'_1\)
and \(X'_2\) be independent, 
\(X'_1\) being distributed as \(X_1\)
and \(X'_2\) being distributed as \(X_2\). Comparing the covariance matrices, we deduce that the pair \( (X_1',X_2') + Z \) has the same law as \( (\widetilde{X}_1, \widetilde{X}_2) \). If $Z_1$ is the vector of the $d_1$ first components of $Z$ and $Z_2$ is the vector of the $d_2$ last components of $Z$,
we get, from this equality in law, 
\begin{align}
 & \mathbb{E}[ \Phi_1(X_1) \Phi_2(X_2)] 
- \mathbb{E} [ \Phi_1 (X_1)] \mathbb{E} [\Phi_2(X_2)] \nonumber \\
=\,& 
\mathbb{E}[ \Phi_1(X_1) \Phi_2(X_2)] 
- \mathbb{E} [ \Phi_1 (X'_1)\Phi_2(X'_2)]
\nonumber \\ 
=\,& \mathbb{E}[ \Phi_1(X_1) \Phi_2(X_2)] 
- \mathbb{E}[ \Phi_1(\widetilde{X}_1)
 \Phi_2(\widetilde{X}_2)] 
 \nonumber \\ & - \left( \mathbb{E} [ \Phi_1 (X'_1)\Phi_2(X'_2)]
 - \mathbb{E} [ \Phi_1 (X'_1 + Z_1)\Phi_2(X'_2 + Z_2)] \right)  \label{decorrelationPhi1Phi2}
\end{align}

We now bound each of the two last differences
of expectations using regularity properties of 
\( \Phi_1, \Phi_2 \). 
For the first difference, we have 
\begin{align*}
|\mathbb{E}[\Phi_1(X_1)\Phi_2(X_2)] - \mathbb{E}[\Phi_1(\widetilde{X}_1)\Phi_2(\widetilde{X}_2)]|
&\leq \mathbb{E}\left[ \left| \Phi_1(X_1) - \Phi_1(\widetilde{X}_1) \right| |\Phi_2(X_2)| \right] \\
&\quad + \mathbb{E}\left[ |\Phi_1(\widetilde{X}_1)| \, \left| \Phi_2(X_2) - \Phi_2(\widetilde{X}_2) \right| \right].
\end{align*}
We have, from the definition of 
$N(\Phi_1)$ and $N(\Phi_2)$, 
$$ |\Phi_1 (X_1) - \Phi_1 (\widetilde{X}_1)|
 \leq N (\Phi_1) \|X_1 - \widetilde{X}_1\|,$$
$$|\Phi_2 (X_2) |
\leq |\Phi_2 (0) |
+ \|X_2\| \sup_{x \in \mathbb{R}^{d_2}} \|\nabla \Phi_2 (x)\|
\leq  N(\Phi_2) (1 + \|X_2\|),
$$
and then, using Cauchy-Schwarz inequality,  
\begin{align*} &  \mathbb{E}\left[ \left| \Phi_1(X_1) - \Phi_1(\widetilde{X}_1) \right| |\Phi_2(X_2)| \right] 
\\ \leq \,& N(\Phi_1) N(\Phi_2) 
\left(\mathbb{E} [ \|X_1 - \widetilde{X}_1\|^2] \right) ^{1/2}
\left(\mathbb{E} [ ( 1 + \|X_2\|)^2] \right)^{1/2}
 \\  \leq \,& N(\Phi_1) N(\Phi_2) 
\left(\mathbb{E} [  d_1 \mu \|N_1\|^2] \right) ^{1/2}
\left(\mathbb{E} [ 2 + 2 \|X_2\|^2] \right)^{1/2}\,.
\end{align*}
We have 
$$\mathbb{E} [ \|N_1\|^2] = d_1, \; 
\mathbb{E} [ \|X_2\|^2] \leq d_2 M,$$
and then 
\begin{align} \mathbb{E}\left[ \left| \Phi_1(X_1) - \Phi_1(\widetilde{X}_1) \right| |\Phi_2(X_2)| \right] & \leq  N(\Phi_1) N (\Phi_2) \sqrt{d_1 \mu} \sqrt{d_1}  ( 2 + 2 d_2 M)^{1/2} 
\nonumber \\ & \leq N(\Phi_1) N(\Phi_2) d_1 \sqrt{2d_2  \mu(1 + M)} 
\nonumber \\ & \leq N(\Phi_1) N(\Phi_2) d_1 d_2 \sqrt{2  \mu(1 + M)}  \label{Phi1Phi2XXtilde}
\end{align}
Similarly, 
\begin{align*} &  \mathbb{E}\left[ |\Phi_1(\widetilde{X}_1)| \cdot \left| \Phi_2(X_2) - \Phi_2(\widetilde{X}_2) \right| \right]
\\ \leq \,& N(\Phi_1) N (\Phi_2) 
\left(\mathbb{E} [ d_2 \mu \|N_2\|^2] \right)^{1/2}
\left(\mathbb{E} [ 2 + 2 \|\widetilde{X}_1\|^2] \right)^{1/2}\,,
\end{align*}
where 
$$\mathbb{E} [ \|N_2\|^2] = d_2, \ \  
\mathbb{E} [ \|\widetilde{X}_1\|^2] \leq d_1 ( M
+ d_1 \mu) \leq d_1 (1 + d_1) M,$$
the last inequality being due to the fact that 
$\mu \leq M$, because of Cauchy-Schwarz inequality and the definitions of $M$ and $\mu$. 
We deduce
\begin{align*} 
&  \mathbb{E}\left[ |\Phi_1(\widetilde{X}_1)| \cdot \left| \Phi_2(X_2) - \Phi_2(\widetilde{X}_2) \right| \right]
\\  \leq \,& N(\Phi_1) N (\Phi_2) 
\sqrt{d_2 \mu} \sqrt{d_2} (2 + 2d_1 (1 + d_1)M)^{1/2}
\\ \leq \,& N(\Phi_1) N (\Phi_2) 
\sqrt{d_2 \mu} \sqrt{d_2} (4 d_1^2 (1 + M))^{1/2} 
\\ = \,& N(\Phi_1) N(\Phi_2) d_1 d_2 \sqrt{4  \mu(1 + M)}.
\end{align*}
Adding this estimate to \eqref{Phi1Phi2XXtilde}, we deduce 
\begin{align}
&|\mathbb{E}[\Phi_1(X_1)\Phi_2(X_2)] - \mathbb{E}[\Phi_1(\widetilde{X}_1)\Phi_2(\widetilde{X}_2)]|\nonumber\\
\leq \,& (2 + \sqrt{2}) N(\Phi_1) N(\Phi_2) d_1 d_2 \sqrt{  \mu(1 + M)}.  \label{Phix1tildexx}
\end{align}
Similar computation as above gives
\begin{align*}
& |\mathbb{E}[\Phi_1(X'_1)\Phi_2(X'_2)] - \mathbb{E}[\Phi_1(X'_1 + Z_1)\Phi_2(X'_2 + Z_2)]|
\\ \leq \,& N(\Phi_1) N(\Phi_2) 
 \left(\mathbb{E} [ \|Z_1\|^2] \right)^{1/2} 
\left(\mathbb{E} [ 2 + 2 \|X'_2\|^2 ] \right)^{1/2}
\\ & + N(\Phi_1) N(\Phi_2)  \left(\mathbb{E} [ \|Z_2\|^2] \right)^{1/2} 
\left(\mathbb{E} [ 2 + 2 \|X'_1 + Z_1\|^2 ] \right)^{1/2}.
\end{align*}
Now, one checks that $X'_2$ has the same distribution as $X_2$, 
$X'_1 + Z_1$ has the same distribution as $\widetilde{X}_1$, $Z_1$ has the same distribution as $\sqrt{\mu d_1} N_1$, and $Z_2$ has the same distribution as $\sqrt{\mu d_2} N_2$. 
We deduce
\begin{align*}
& |\mathbb{E}[\Phi_1(X'_1)\Phi_2(X'_2)] - \mathbb{E}[\Phi_1(X'_1 + Z_1)\Phi_2(X'_2 + Z_2)]|
\\ \leq \,& N(\Phi_1) N(\Phi_2) 
 \left(\mathbb{E} [ \mu d_1 \|N_1\|^2] \right)^{1/2} 
\left(\mathbb{E} [ 2 + 2 \|X_2\|^2 ] \right)^{1/2}
\\ & + N(\Phi_1) N(\Phi_2)  \left(\mathbb{E} [ \mu d_2 \|N_2\|^2] \right)^{1/2} 
\left(\mathbb{E} [ 2 + 2 \|\widetilde{X}_1 \|^2 ] \right)^{1/2}.
\end{align*}
By the computation above, 
\begin{align*}
& |\mathbb{E}[\Phi_1(X'_1)\Phi_2(X'_2)] - \mathbb{E}[\Phi_1(X'_1 + Z_1)\Phi_2(X'_2 + Z_2)]|
\\ \leq  \,&  (2 + \sqrt{2}) N(\Phi_1) N(\Phi_2) d_1 d_2 \sqrt{  \mu(1 + M)}.
\end{align*}
Summing this estimate with \eqref{Phix1tildexx}
and using \eqref{decorrelationPhi1Phi2}, we deduce 
\begin{align*}
& \left| \mathbb{E}[\Phi_1(X_1)\Phi_2(X_2)] - \mathbb{E}[\Phi_1(X_1)] \mathbb{E}[\Phi_2(X_2)] \right|
\\ \leq \,& (4 + 2 \sqrt{2}) d_1 d_2\, N(\Phi_1)\, N(\Phi_2)\, \sqrt{ \mu(1 + M) }
\end{align*}
and hence the claim.
\end{proof}

\begin{rem}
Proposition \ref{proposition approximation bound} provides a quantitative bound on the covariance between two continous differentiable functionals of jointly Gaussian vectors. The key idea is that, even when two vectors \( X_1 \) and \( X_2 \) are not independent, their functionals \( \Phi_1(X_1) \) and \( \Phi_2(X_2) \) may be nearly uncorrelated if the maximal covariance \( \mu \) between coordinates of \( X_1 \) and \( X_2 \) is small.

This result is particularly useful in analyzing   dependencies in Gaussian processes. In our setting, it allows us to bound the covariance between terms of the form \( x_{m+\ell} \mathds{1}_{\{w_m > c\}} \), which arise in the numerator and denominator of the PRSA average. The dependence between such terms decays as the time indices separate, and this proposition helps us make that decay quantitative by leveraging the Gaussian structure and regularity of \( \Phi_1, \Phi_2 \).
\end{rem}

We now adjust Proposition \ref{proposition approximation bound} to the functionals arising in our analysis of the PRSA average, where each term involves a product of a signal value and a thresholded increment.
\begin{prop}[Covariance bound for PRSA terms]  \label{covarianceboundPRSA}
Let \( (x_n)_{n \in \mathbb{Z}} \) be a centered stationary Gaussian process with covariance function \( C(k) = \mathbb{E}[x_0 x_k] \), and define increments \( w_n := x_n - x_{n-1} \). 
We assume $C(0) > |C(1)|$. For integers $p \geq 1$, $a < b$,  
let $X$ be a random variable equal to the product of at most 
$p$ random factors among 
$x_n$ and $\mathds{1}_{w_n > c}$, for $n \leq a$, 
i.e. 
$$X = x_{n_1} x_{n_2} 
\dots x_{n_r}   \mathds{1}_{w_{n_{r+1} > c}}
\dots \mathds{1}_{w_{n_{r + s}} > c}$$
 for some integers $n_1, \dots, n_{r+s} \leq a$, $r +s$ being at most $p$. Similarly, let $Y$ be a random variable equal to the product of at most $p$ factors among $x_n$ and $\mathds{1}_{w_n > c}$ for $n \geq b$: 
$$Y = x_{n'_1} x_{n'_2} 
\dots x_{n'_{r'}}   \mathds{1}_{w_{n'_{r'+1} > c}}
\dots \mathds{1}_{w_{n'_{r' + s'}} > c}$$
 for $n'_1, \dots, n'_{r'+s'} \geq b$, $r' +s'$ being at most $p$.  
Then
$$|\mathbb{E} [ XY] - \mathbb{E} [X] \mathbb{E}[Y] | 
\leq K(p) (C(0)- C(1))^{-\frac{1}{4p}} (1  + C(0)^{p+1 + \frac{1}{4p} - \frac{1}{12p-2}} )\left(\sup_{\ell \geq b-a-1} |C(\ell)| \right)^{\frac{1}{12p-2}}  $$
for $K(p) > 0$ depending only on $p$. 

\end{prop}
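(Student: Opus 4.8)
Proposition~\ref{proposition approximation bound} cannot be applied to $X$ and $Y$ directly, because the functionals defining them are neither bounded nor globally Lipschitz: the factors $\mathds{1}_{w_n>c}$ are discontinuous, and the monomials in the $x_n$ have unbounded gradient. The plan is to regularize both sources of non-smoothness using two auxiliary parameters $\delta\in(0,1]$ and $R\ge1$, apply Proposition~\ref{proposition approximation bound} to the regularized functionals, control the regularization error by H\"older's inequality and Gaussian tail bounds, and finally optimize over $\delta$ and $R$. To begin, write $X=\Phi_1(X_1)$ and $Y=\Phi_2(X_2)$, where $X_1=(x_m)_{m\in S_1}$ and $X_2=(x_m)_{m\in S_2}$ are Gaussian with $S_1\subset(-\infty,a]$, $S_2\subset[b-1,\infty)$ and $|S_1|,|S_2|\le2p$; all coordinates have variance $C(0)$, and every cross-covariance $\mathbb{E}[x_mx_{m'}]$ with $m\in S_1$, $m'\in S_2$ has modulus at most $\nu:=\sup_{\ell\ge b-a-1}|C(\ell)|$. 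Fix a $C^1$ ``ramp'' $\psi_\delta$ with $0\le\psi_\delta\le1$, $\psi_\delta\equiv0$ on $(-\infty,c]$, $\psi_\delta\equiv1$ on $[c+\delta,\infty)$ and $\|\psi_\delta'\|_\infty\le2/\delta$, and a $C^1$ truncation $\chi_R$ with $|\chi_R(t)|\le\min(|t|,R)$, $|\chi_R'|\le1$. Replacing each factor $\mathds{1}_{w_n>c}$ by $\psi_\delta(w_n)$ and each factor $x_n$ by $\chi_R(x_n)$ produces $\widetilde X=\widetilde\Phi_1(X_1)$ and $\widetilde Y=\widetilde\Phi_2(X_2)$.

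\textbf{Decorrelating the regularized functionals.} The $\widetilde\Phi_s$ are continuously differentiable with $|\widetilde\Phi_s(0)|\le1$, and by the product rule $\sup\|\nabla\widetilde\Phi_s\|\le C_pR^{p-1}/\delta$, since differentiating one factor produces a factor $\le1$ or $\le2/\delta$ while the remaining $\le p-1$ factors are bounded by $R$ or $1$. Thus $N(\widetilde\Phi_s)\le C_pR^{p-1}/\delta$. Applying Proposition~\ref{proposition approximation bound} with $d_1,d_2\le2p$, $M\le C(0)$ and $\mu\le\nu$ gives
\[
\bigl|\mathbb{E}[\widetilde X\widetilde Y]-\mathbb{E}[\widetilde X]\,\mathbb{E}[\widetilde Y]\bigr|\le C_p\,\frac{R^{2p-2}}{\delta^2}\,\sqrt{\nu\,(1+C(0))}.
\]

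\textbf{Controlling the regularization error.} One then estimates $|\mathbb{E}[XY]-\mathbb{E}[\widetilde X\widetilde Y]|$ and, in the same way, $|\mathbb{E}[X]\mathbb{E}[Y]-\mathbb{E}[\widetilde X]\mathbb{E}[\widetilde Y]|$ (using $|\mathbb{E}[X]-\mathbb{E}[\widetilde X]|\le\mathbb{E}[|X-\widetilde X|]$ together with $\mathbb{E}[|X|],\mathbb{E}[|\widetilde Y|]\lesssim_p(1+C(0))^{p/2}$). Expanding $X-\widetilde X$ as a telescoping sum over its $\le p$ factors, each summand is a product of at most $2p$ factors --- indicators, bounded by $1$; Gaussians $x_n$ and their truncations, with all $L^q$ norms $\lesssim_{p,q}(1+C(0))^{1/2}$ --- times one ``small'' factor: either $\mathds{1}_{w_n>c}-\psi_\delta(w_n)$, supported on $\{c<w_n\le c+\delta\}$, or $x_n-\chi_R(x_n)$, supported on $\{|x_n|>R\}$. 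H\"older's inequality, assigning a large exponent to the benign factors and the complementary exponent $q_0$ (taken proportional to $p$) to the small factor, then bounds each summand. Here the bounded density of $w_n$ --- available precisely because $\operatorname{Var}(w_n)=2(C(0)-C(1))>0$, which is where the hypothesis $C(0)>|C(1)|$ enters --- gives $\|\mathds{1}_{w_n>c}-\psi_\delta(w_n)\|_{q_0}\lesssim(\delta/\sqrt{C(0)-C(1)})^{1/q_0}$, while Gaussian tails give $\|x_n-\chi_R(x_n)\|_{q_0}\lesssim_{p,q_0}(1+C(0))^{O(1)}R^{-m/(2q_0)}$ for any moment order $m$. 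Collecting everything,
\[
|\operatorname{Cov}(X,Y)|\le C_p\Bigl[\tfrac{R^{2p-2}}{\delta^2}\sqrt{\nu(1+C(0))}+\delta^{1/q_0}(C(0)-C(1))^{-1/(2q_0)}(1+C(0))^{O(p)}+R^{-m/(2q_0)}(1+C(0))^{O(p)}\Bigr].
\]

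\textbf{Optimization and main obstacle.} Choosing $q_0$ and $m$ proportional to $p$, and then balancing the three terms above by taking $\delta$ and $R$ to be appropriate powers of $\nu$, $C(0)$ and $C(0)-C(1)$, collapses this into a bound of exactly the stated form, with the exponents $1/(4p)$, $1/(12p-2)$ and $p+1+\frac1{4p}-\frac1{12p-2}$ being what the balancing returns. The conceptual part --- smoothing the indicators, truncating the Gaussians, invoking Proposition~\ref{proposition approximation bound} --- is routine; the real difficulty, and the step I expect to be the main obstacle, is the last paragraph's bookkeeping. The three error terms depend on $(\delta,R)$ with competing exponents, and one must keep the $C(0)$-dependence polynomial (with the prescribed exponent) while still extracting a genuine positive power of the decay $\nu$. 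It is this tension, sharpened by the $(C(0)-C(1))^{-1}$ degradation of the density of $w_n$, that dictates the somewhat unusual fractional exponents.
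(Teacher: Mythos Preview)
Your approach is essentially the same as the paper's: smooth the indicators at scale $\delta$, truncate the Gaussian factors at level $R$, apply Proposition~\ref{proposition approximation bound} to the regularized functionals, bound the regularization error by telescoping and H\"older (using the bounded density of $w_n$ for the indicator terms and Gaussian tails for the truncation terms), then optimize over $\delta$ and $R$. The paper makes the same choices and arrives at the same three-term bound; it then takes $\delta=R^{-2p}$ (so the $\delta^{1/(2p)}$ and $R^{-1}$ terms coincide) and $R=1+\nu^{-1/(12p-2)}$ to balance against the $R^{6p-2}\sqrt{\nu}$ term, which is exactly what your final paragraph is gesturing at. The only substantive gap in your write-up is that you assert rather than carry out this balancing, but the computation is short and the exponents do fall out as claimed.
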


Note that this bound is independent of the chosen $c$.

\begin{proof}
Let $\delta \in (0,1),  R > 1$. 
There are functions $g_{\delta}$ and $h_R$ from $\mathbb{R}$ to $\mathbb{R}$ satisfying the following properties: 
\begin{itemize}
\item $g_{\delta}$ takes values in $[0,1]$, is  bounded and smooth, with derivative 
dominated by $1/\delta$, and coincides with the indicator of $[0, \infty)$ outside the interval $[- \delta, \delta]$. 
\item $h_R$ is bounded, smooth, increasing,  coincides with the identity function on $[-R, R]$, tends to $2R$ at infinity and to $-2R$ at minus infinity, and satisfies $h'_R \leq 1$. 
\end{itemize}
The first step consists in 
 replacing one by one 
each factor $x_m$ by $h_R(x_m)$ and each factor $\mathds{1}_{w_m > c}$ by 
$g_{\delta} (w_m - c)$ in the expressions of $X$ and $Y$.
After each replacement, we get new random variables
which we will still name $X$ and $Y$, and we will 
study how much the quantity $\mathbb{E}[XY] - \mathbb{E}[X] \mathbb{E}[Y]$ is modified by the replacement. 
 Each replacement  
of $x_m$ by $h_R (x_m)$
changes only one among the two expectations $\mathbb{E} [ X]$ and $\mathbb{E} [ Y]$, 
the change being bounded, via H\"older inequality,  stationarity of the Gaussian process, and the fact that $|h_R(x)| \leq |x|$ because $h_R' \leq 1$,  by
\begin{align*} \mathbb{E} [ |x_{m_1}| | x_{m_2} | & 
\dots |x_{m_v}| |x_m - h_R (x_m)|]
 \leq \mathbb{E} [ |x_{m_1}| | x_{m_2} |
\dots |x_{m_v}| |x_m | \mathds{1}_{|x_m| \geq R}]
\\ & \leq  R^{-1} \mathbb{E} [ |x_{m_1}| | x_{m_2} |
\dots |x_{m_v}| |x_m |^2]
\\ & \leq R^{-1} \prod_{j=1}^v 
\left(\mathbb{E}[ |x_{m_j}|^{v+2}] \right)^{1/(v+2)}
\left(\mathbb{E} [ |x_m|^{v+2}] \right)^{2/(v+2)}
\\ & \leq R^{-1} (C(0))^{1 + v/2} 
\mathbb{E} [|\mathcal{N}|^{v+2}] \leq K_1(p) R^{-1} (1 + (C(0))^{(1 + p)/2})
\end{align*}
for some integers $m_1, \dots, m_v$, $v \leq p-1$, where $\mathcal{N}$ is a standard normal variable variable, and $K_1(p) > 0$ depends only on $p$. 
Hence, replacing one of the factors $x_m$ by $h_R(x_m)$ changes the expectation of one of the variables $X$ or $Y$ ($X$ if $m \leq a$ and $Y$ if $m\geq b$) by a quantity bounded by 
$K_1(p) R^{-1} (1 + (C(0))^{(1 + p)/2})$, while the other expectation being unchanged. Here, this unchanged expectation is bounded by 
$$
\mathbb{E} [ |x_{m_1}| \dots |x_{m_v}|] \leq K_2 (p) ( 1 + C(0)^{p/2}) 
$$
for some integers $m_1, \dots, m_v$, $v \leq p$, 
$K_2(p) > 0$ depending only on $p$. 
Hence, changing successively $x_n$ by $h_R(x_n)$ in the expression of $X$ and $Y$ changes the value of $\mathbb{E}[X] \mathbb{E}[ Y]$ by 
at most 
$$K_1(p) K_2(p) 
R^{-1} (1 + C(0)^{p/2}) (1 + C(0)^{(p+1)/2})$$
for each change. 
More explicitly, we have 
\begin{align*}
& \left| \mathbb{E} [ h_R(x_{n_1}) h_R(x_{n_2}) \dots h_R(x_{n_v})
x_{n_{v+1}} \dots x_{n_r}  \mathds{1}_{w_{n_{r+1} > c}}
\dots \mathds{1}_{w_{n_{r + s}} > c} ] \right.
\\ &  \quad \times \mathbb{E} [ h_R(x_{n'_1}) h_R(x_{n'_2}) \dots h_R(x_{n'_{v'}})
x_{n'_{v'+1}} \dots x_{n'_{r'}}  \mathds{1}_{w_{n'_{r'+1} > c}}
\dots \mathds{1}_{w_{n'_{r' + s'}} > c}   
]  
\\ & -  \mathbb{E} [ h_R(x_{n_1}) h_R(x_{n_2}) \dots h_R(x_{n_v})
h_R(x_{n_{v+1}}) x_{n_{v+2}} \dots x_{n_r}  \mathds{1}_{w_{n_{r+1} > c}}
\dots \mathds{1}_{w_{n_{r + s}} > c} ]  
\\ &  \left. \quad \times \mathbb{E} [ h_R(x_{n'_1}) h_R(x_{n'_2}) \dots h_R(x_{n'_{v'}})
x_{n'_{v'+1}} \dots x_{n'_{r'}}  \mathds{1}_{w_{n'_{r'+1} > c}}
\dots \mathds{1}_{w_{n'_{r' + s'}} > c} ]   \right|
\\ \leq \,& K_1(p) K_2(p) 
R^{-1} (1 + C(0)^{p/2}) (1 + C(0)^{(p+1)/2})
\end{align*}
for integers $r \geq 1$, $s \geq 0$, $0 \leq v \leq r-1$,  
$n_1, \dots n_{r+s} \leq a$, $r' \geq 0$, $s' \geq 0$, $0 \leq v' \leq r'$,  
$n'_1, \dots n'_{r'+s'} \geq b$, such that $r + s$ and $r' + s'$ are between $1$ and $p$. 

Similarly, the modification of $\mathbb{E}[XY]$ when 
we replace each factor $x_n$ by $h_R(x_n)$ is bounded by differences of the form
\begin{align*}
& \left| \mathbb{E} [ h_R(x_{n_1}) h_R(x_{n_2}) \dots h_R(x_{n_v})
x_{n_{v+1}} \dots x_{n_r}  \mathds{1}_{w_{n_{r+1} > c}}
\dots \mathds{1}_{w_{n_{r + s}} > c}  \right.
\\ &  \quad   \times  h_R(x_{n'_1}) h_R(x_{n'_2}) \dots h_R(x_{n'_{v'}})
x_{n'_{v'+1}} \dots x_{n'_{r'}}  \mathds{1}_{w_{n'_{r'+1} > c}}
\dots \mathds{1}_{w_{n'_{r' + s'}} > c}   
]  
\\ & -  \mathbb{E} [ h_R(x_{n_1}) h_R(x_{n_2}) \dots h_R(x_{n_v})
h_R(x_{n_{v+1}}) x_{n_{v+2}} \dots x_{n_r}  \mathds{1}_{w_{n_{r+1} > c}}
\dots \mathds{1}_{w_{n_{r + s}} > c}   
\\ &  \left. \quad\times   h_R(x_{n'_1}) h_R(x_{n'_2}) \dots h_R(x_{n'_{v'}})
x_{n'_{v'+1}} \dots x_{n'_{r'}}  \mathds{1}_{w_{n'_{r'+1} > c}}
\dots \mathds{1}_{w_{n'_{r' + s'}} > c} ]   \right|.
\end{align*}
These differences are bounded similarly as the modification of $\mathbb{E}[X]$ and $\mathbb{E}[Y]$. The only modification in the bound is the fact that $XY$ is a product of at most $2p$ factors 
of the form $x_n$ or $\mathds{1}_{w_n > c}$, instead of $p$ factors for $X$ or $Y$. 
Using H\"older inequality and stationarity as above, we get 
\begin{align*}
& \left| \mathbb{E} [ h_R(x_{n_1}) h_R(x_{n_2}) \dots h_R(x_{n_v})
x_{n_{v+1}} \dots x_{n_r}  \mathds{1}_{w_{n_{r+1} > c}}
\dots \mathds{1}_{x_{n_{r + s}} > c}  \right.
\\ &    \quad \times  h_R(x_{n'_1}) h_R(x_{n'_2}) \dots h_R(x_{n'_{v'}})
x_{n'_{v'+1}} \dots x_{n'_{r'}}  \mathds{1}_{w_{n'_{r'+1} > c}}
\dots \mathds{1}_{w_{n'_{r' + s'}} > c}   
]  
\\ & -  \mathbb{E} [ h_R(x_{n_1}) h_R(x_{n_2}) \dots h_R(x_{n_v})
h_R(x_{n_{v+1}}) x_{n_{v+2}} \dots x_{n_r}  \mathds{1}_{w_{n_{r+1} > c}}
\dots \mathds{1}_{w_{n_{r + s}} > c}   
\\ &  \left.\quad \times   h_R(x_{n'_1}) h_R(x_{n'_2}) \dots h_R(x_{n'_{v'}})
x_{n'_{v'+1}} \dots x_{n'_{r'}}  \mathds{1}_{w_{n'_{r'+1} > c}}
\dots \mathds{1}_{w_{n'_{r' + s'}} > c} ]   \right|
\\ \leq \,& K_3 (p) R^{-1} (1 + C(0)^{1/2 + p})
\end{align*}
for $K_3(p) > 0$ depending only on $p$. 

 Adding the bounds above on the modification of 
 $\mathbb{E}[X] \mathbb{E}[Y]$ and $\mathbb{E}[XY]$, we deduce that $\mathbb{E}[XY] - \mathbb{E}[X] \mathbb{E}[Y]$ 
is modified by at most $K_4(p)R^{-1}(1 + C(0)^{1/2 + p})$
for $K_4(p) > 0$ depending only on $p$, for each replacement of a factor $x_n$
by $h_R(x_n)$.  

Once all factors $x_n$ have been replaced by $h_R(x_n)$, we consider the effect of 
successive replacements of the factors $\mathds{1}_{w_n > c}$ by $g_{\delta}(w_n - c)$.
We have to bound expressions of the form
\begin{align*}
& \left| \mathbb{E} [ h_R(x_{n_1}) \dots h_R(x_{n_r})
  g_{\delta} (w_{n_{r+1}} - c) 
  \dots g_{\delta} (w_{n_{r + v}} - c) 
  \mathds{1}_{w_{n_{r + v + 1}} > c}
\dots \mathds{1}_{w_{n_{r + s}} > c} ] \right.
\\ & \quad \times \mathbb{E} [ h_R(x_{n'_1})   \dots h_R(x_{n'_{r'}})
  g_{\delta} (w_{n'_{r'+1}} - c) 
  \dots g_{\delta} (w_{n'_{r' + v'}} - c) 
  \mathds{1}_{w_{n'_{r' + v' + 1}} > c}
\dots \mathds{1}_{w_{n'_{r' + s'}} > c}  
]  
\\ & -   \mathbb{E} [ h_R(x_{n_1}) \dots h_R(x_{n_r})
  g_{\delta} (w_{n_{r+1}} - c) 
  \dots g_{\delta} (w_{n_{r + v}} - c) 
  g_{\delta} (w_{n_{r + v+1}} - c) 
\dots \mathds{1}_{w_{n_{r + s}} > c} ]  
\\ & \left.\quad \times \mathbb{E} [ h_R(x_{n'_1})   \dots h_R(x_{n'_{r'}})
  g_{\delta} (w_{n'_{r'+1}} - c) 
  \dots g_{\delta} (w_{n'_{r' + v'}} - c) 
  \mathds{1}_{w_{n'_{r' + v' + 1}} > c}
\dots \mathds{1}_{w_{n'_{r' + s'}} > c}  
]    \right|
\end{align*}
and 
\begin{align*}
& \left| \mathbb{E} [ h_R(x_{n_1}) \dots h_R(x_{n_r})
  g_{\delta} (w_{n_{r+1}} - c) 
  \dots g_{\delta} (w_{n_{r + v}} - c) 
  \mathds{1}_{w_{n_{r + v + 1}} > c}
\dots \mathds{1}_{w_{n_{r + s}} > c}   \right.
\\ & \quad \times  h_R(x_{n'_1})   \dots h_R(x_{n'_{r'}})
  g_{\delta} (w_{n'_{r'+1}} - c) 
  \dots g_{\delta} (w_{n'_{r' + v'}} - c) 
  \mathds{1}_{w_{n'_{r' + v' + 1}} > c}
\dots \mathds{1}_{w_{n'_{r' + s'}} > c}  
]  
\\ & -   \mathbb{E} [ h_R(x_{n_1}) \dots h_R(x_{n_r})
  g_{\delta} (w_{n_{r+1}} - c) 
  \dots g_{\delta} (w_{n_{r + v}} - c) 
  g_{\delta} (w_{n_{r + v+1}} - c) 
\dots \mathds{1}_{w_{n_{r + s}} > c}    
\\ & \left.\quad \times   h_R(x_{n'_1})   \dots h_R(x_{n'_{r'}})
  g_{\delta} (w_{n'_{r'+1}} - c) 
  \dots g_{\delta} (w_{n'_{r' + v'}} - c) 
  \mathds{1}_{w_{n'_{r' + v' + 1}} > c}
\dots \mathds{1}_{w_{n'_{r' + s'}} > c}  
]    \right|
\end{align*}
for integers $r \geq 0$, $s \geq 1$, $0 \leq v \leq s-1$,  
$n_1, \dots n_{r+s} \leq a$, $r' \geq 0$, $s' \geq 0$, $0 \leq v' \leq s'$,  
$n'_1, \dots n'_{r'+s'} \geq b$, such that $r + s$ and $r' + s'$ are between $1$ and $p$. 

We bound the differences of expectations or product of expectations 
by using again H\"older inequality, stationarity and the bounds $|h_R(x)| \leq |x|$, $|g_{\delta} (x)| \leq 1$. Moreover, we observe that by assumptions on $g_{\delta}$, 
$|g_{\delta} (x-c)- \mathds{1}_{x > c}|$ is always bounded by $1$, and vanishes as soon as $x-c \notin [-\delta,\delta]$:
$$|g_{\delta} (x-c)- \mathds{1}_{x > c}| \leq \mathds{1}_{|x-c| \leq \delta}.$$
We deduce that the expectations of $X$ and $Y$ are modified, for each replacement of a factor $\mathds{1}_{x_n > c}$ by $g_{\delta}(x_n- c)$, 
by at most a quantity of the form
\begin{align*} \mathbb{E} [ |x_{m_1}| | x_{m_2} | & 
\dots |x_{m_r}| \mathds{1}_{|w_m - c| \leq \delta}]
 \leq \prod_{j=1}^r \left(\mathbb{E} [ |x_{m_j}|^{r+1}] \right)^{1/(r+1)}
\left( \mathbb{P}[ |w_m - c| \leq \delta] \right)^{1/(r+1)}
\\ & \leq K_5(p) (1 + C(0)^{(p-1)/2})
(\min \{1, \delta (C(0)- C(1))^{-1/2}\})^{1/p},
\end{align*}
for some integers $m_1, \dots, m_r, m$, $r \leq p-1$, and $K_5(p) > 0$ depending only on $p$.

For the expectation 
of $XY$, we get a bound of its modification 
by a quantity of the form
$$\mathbb{E} [ |x_{n_1}| | x_{n_2} |  
\dots |x_{n_r}||x_{n'_1}| | x_{n'_2} |  
\dots |x_{n'_{r'}}|
\mathds{1}_{|w_n - c| \leq \delta}] $$
for $ r \leq p-1$, $ r' \leq p$, which is controlled by the same method. 
Overall, each replacement of $\mathds{1}_{w_n > c}$ by $g_{\delta} (w_n -c)$ gives a modification of 
$\mathbb{E} [XY] - \mathbb{E}[X] \mathbb{E}[ Y]$ by
at most 
$$K_6(p) (1 + C(0)^{p - 1/2} )
 (\delta (C(0)- C(1))^{-1/2})^{1/2p},$$
$K_6(p) > 0$ depending only on $p$. 

After doing all the $\mathcal{O}(p)$ changes of $x_n$ by $h_R(x_n)$ and of $\mathds{1}_{w_n > c}$ by $g_{\delta}(w_n -c)$  successively, 
we then get a total modification of 
$\mathbb{E} [XY] - \mathbb{E}[X] \mathbb{E}[ Y]$ by
at most 
\begin{align*}
& \mathcal{O}(p) \left( K_4(p) R^{-1} (1 + C(0)^{1/2 + p}) + K_6(p) (1 + C(0)^{p - 1/2} )
 (\delta (C(0)- C(1))^{-1/2})^{1/2p} \right)
 \\ \leq\,& 
K_7(p) (1 + C(0)^{p - 1/2} )
 (R^{-1}(1 + C(0)) + (\delta (C(0)- C(1))^{-1/2})^{1/2p}) ,
 \end{align*}
where $K_7(p) > 0$ depends only on $p$.

The second step of the proof consists in 
applying Proposition
\ref{proposition approximation bound}
in order to bound the covariance
of $X$ and $Y$ after modification 
of all factors $x_n$ by $h_R (x_n)$ and 
all factors $\mathds{1}_{w_n > c}$ by 
$g_{\delta} (w_n - c)$. In other words, 
we now bound $\mathbb{E}[X_{R, \delta} Y_{R, \delta}] - \mathbb{E}[X_{R, \delta}] \mathbb{E}[Y_{R, \delta}]$ for 
$$X_{R,\delta} = h_R(x_{n_1})  
\dots h_R(x_{n_r})   g_{\delta} (w_{n_{r+1}} -c)
\dots g_{\delta} (w_{n_{r+s}} -c)$$
and 
$$Y_{R,\delta} = h_R( x_{n'_1}) \dots 
\dots h_R(x_{n'_{r'}})   g_{\delta}(w_{n'_{r'+1}}- c) 
\dots  g_{\delta}(w_{n'_{r'+s'}}- c). $$

Keeping the notation of Proposition \ref{proposition approximation bound}, we take Gaussian random vectors
$$X_1 := (x_{n_1}, \dots, x_{n_r}, w_{n_{r+1}},\dots,  
w_{n_{r+s}}),$$
$$X_2 :=(x_{n'_1}, \dots, x_{n'_{r'}}, w_{n'_{r'+1}}, \dots, 
w_{n'_{r'+s'}}),$$
and functions
$$\Phi_1 (t_1, \dots, t_{r+s}) 
:= h_R(t_1) \dots h_R(t_r) g_{\delta}(t_{r+1}-c) \dots g_{\delta} (t_{r+s}-c),$$
$$\Phi_2 (t_1, \dots, t_{r'+s'}) 
:= h_R(t_1) \dots h_R(t_{r'}) g_{\delta}(t_{r'+1}-c) \dots g_{\delta} (t_{r'+s'}-c).$$
By assumption, $X_1$ and $X_2$ have dimension at most $p$, i.e. $d_1, d_2 \leq p$. 
Since $g_{\delta}$ takes values in $[0,1]$, $h_{R}$ takes values in 
$[-2R, 2R]$, $|g_{\delta}'| = \mathcal{O}(\delta^{-1})$ and $|h_R'| = \mathcal{O}(1)$, 
we deduce that each partial derivative of $\Phi_1$ and $\Phi_2$ is dominated by $(2R)^{p-1} \delta^{-1}$. 
We have that $\Phi_1 (0)$ and $\Phi_2 (0)$
are in $[0,1]$, since they vanish as soon as there is one factor $h_R$ involved and $g_{\delta}$ takes values in $[0,1]$. We get, for $s \in \{1,2\}$, 
$$N(\Phi_s) = |\Phi_s(0)| + \sup_{x \in \mathbb{R}^{d_s}} \| \nabla \Phi_s(x) \| = \mathcal{O} (
\sqrt{p} (2R)^{p-1} \delta^{-1}). 
$$
The coordinates 
of $X_1$ and $X_2$ 
are of the form $x_n$ or $w_n$, and then 
their variance is $C(0)$ or
$$\mathbb{E} [ w_n^2] 
= \mathbb{E} [ x_n^2 + x_{n-1}^2 
- 2 x_n x_{n-1}] = 2 (C(0) - C(1)) 
\leq 4 C(0)
$$
due to the assumption $C(0)> |C(1)|$. Hence, 
$$
M:= \max\{ \operatorname{Var}((X_1)_j), \operatorname{Var}((X_2)_k) : 1 \leq j \leq d_1,\, 1 \leq k \leq d_2 \} \leq 4 C(0).
$$ 
Moreover, the covariance of $x_n$ or 
$w_n$ for $n \leq a$ with 
$x_{n'}$ or $w_{n'}$ with $n \geq b$ is 
at most four times the supremum of 
$C(\ell)$ for $\ell \geq b-a-1$. We deduce that 
$$\mu := \max\{ |\operatorname{Cov}((X_1)_j, (X_2)_k)| : 1 \leq j \leq d_1,\, 1 \leq k \leq d_2 \} \leq 4 \sup_{\ell \geq b-a-1} 
|C(\ell)|,$$
since $X_1$ involves only variables of index at most $a$ and $X_2$ involves variables of index at least $b$. 

From Proposition \ref{proposition approximation bound}, we deduce 
\begin{align*}
&  |\mathbb{E} [ X_{R,\delta} Y_{R, \delta}]
- \mathbb{E}[X_{R, \delta}] \mathbb{E}[Y_{R,\delta}] |
\\  =\,& |\mathbb{E} [ \Phi_1 (X_1) \Phi_2(X_2)] - \mathbb{E} [ \Phi_1(X_1)] \mathbb{E} [ \Phi_2 (X_2)] |
\\ = \,& \mathcal{O}\left( p^3 
 (2R)^{2p - 2} \delta^{-2}  
\left(  \sup_{\ell \geq b-a-1} 
|C(\ell)| \right)^{1/2} (1 +   C(0))^{1/2} \right)\,.
\end{align*}
 For 
$$X = x_{n_1} x_{n_2} 
\dots x_{n_r}   \mathds{1}_{w_{n_{r+1} > c}}
\dots \mathds{1}_{w_{n_{r + s}} > c},$$
$$Y = x_{n'_1} x_{n'_2} 
\dots x_{n'_{r'}}   \mathds{1}_{w_{n'_{r'+1} > c}}
\dots \mathds{1}_{w_{n'_{r' + s'}} > c},$$
we deduce, from the bounds proven above, that 
\begin{align*}
& |\mathbb{E} [ XY]- \mathbb{E}[X] \mathbb{E}[Y]| 
\\ \leq \,& K_7(p) (1 + C(0)^{p - 1/2} )
 (R^{-1}(1 + C(0)) + (\delta (C(0)- C(1))^{-1/2})^{1/2p})
 \\ & + \mathcal{O}\left( p^3 
 (2R)^{2p - 2} \delta^{-2}  
\left(  \sup_{\ell \geq b-a-1} 
|C(\ell)| \right)^{1/2} (1 +   C(0))^{1/2} \right).
\end{align*}
Now, we simplify the bound by choosing $R$ and $\delta$. Since $|C(1)| < C(0)$ and hence $C(0) - C(1) \leq 2 C(0)$, 
\begin{align*} & (1 + C(0)^{p-1/2})(R^{-1}(1 + C(0)) +  
 (\delta(C(0)- C(1))^{-1/2})^{1/2p} )
 \\ =\,& (1 + C(0)^{p-1/2}) (C(0) - C(1))^{-1/4p}
( \delta^{1/2p} + R^{-1} ( 1 + C(0)) (C(0) - C(1))^{1/4p})
\\ \leq \,& (1 + C(0)^{p-1/2})
(C(0) - C(1))^{-1/4p}
( \delta^{1/2p} + R^{-1}(1 + C(0)) (2 C(0))^{1/4p})\,. 
\end{align*} 
Since 
$$(1 + C(0)^{p-1/2}) \leq  2(1 + C(0)^{p-1/2 + 1 + 1/4p})$$
and 
$$(1 + C(0)^{p-1/2}) (1 + C(0)) (2 C(0))^{1/4p}$$
is also dominated by 
$1 + C(0)^{p-1/2 + 1 + 1/4p}$, because 
$C(0)^b \leq C(0)^a + C(0)^c$ for $0 \leq a \leq b \leq c$, the quantity above is dominated, for a given value of $p$, by  
\begin{align*} 
&(R^{-1}+\delta^{1/2p})(1 + C(0)^{p-1/2 + 1 + 1/4p})
(C(0) - C(1))^{-1/4p}\\
= \,& (R^{-1}+\delta^{1/2p}) (1+ C(0)^{p+1/2 + 1/4p})
(C(0) - C(1))^{-1/4p}\,.
\end{align*}
For $(1 + C(0))^{1/2}$ in the $\mathcal{O}$ term, we bound it by
\begin{align*}
(1 + C(0))^{1/2}
= \, &(1 + C(0))^{1/2} (2 C(0))^{1/4p} (2 C(0))^{-1/4p}\\
\leq\,& (1 + C(0))^{1/2} (2 C(0))^{1/4p} (C(0) - C(1))^{-1/4p}\,,
\end{align*}
which is dominated by
$$2(1 + C(0)^{1/2 + 1/4p})
(C(0) - C(1))^{-1/4p} \leq 2(1 + C(0)^{p+1/2 + 1/4p})
(C(0) - C(1))^{-1/4p}.
$$
We deduce 
\begin{align*}
& |\mathbb{E} [ XY]- \mathbb{E}[X] \mathbb{E}[Y]| 
\\ \leq \,& 
K_8(p) (C(0)-C(1))^{-1/4p} (1 + C(0)^{p+1/2 + 1/4p})
 \\ & \times \left( R^{-1} 
+ \delta^{1/2p} 
+   R^{2p-2} \delta^{-2}  \left(\sup_{\ell \geq b-a-1} |C(\ell)| \right)^{1/2} \right)
\end{align*}
where $K_8(p) > 0$ depends only on $p$.
We now choose 
$$R = 1+ \left(\sup_{\ell \geq b-a-1} |C(\ell)| \right)^{-1/(12p-2)}$$
and $\delta = R^{-2p}$, 
which gives 
\begin{align*}
&\quad R^{-1} 
+ \delta^{1/2p} 
+   R^{2p-2} \delta^{-2}  \left(\sup_{\ell \geq b-a-1} |C(\ell)| \right)^{1/2} 
\\ &  = 2 R^{-1}
+  R^{2p-2} R^{4p}(R - 1)^{-(6p-1)}
\\ & \leq 2 R^{-1}
+  R^{6p-2} (R - 1)^{-(6p-1)}
\\ & \leq 2 (R-1)^{-1}
+   (1 + (R-1))^{6p-2} (R-1)^{-(6p-1)}
\\ & \leq 2 (R-1)^{-1} 
+ 2^{6p-2} (1 + (R-1)^{6p-2}) 
 (R-1)^{-(6p-1)}
 \\ & \leq (2 + 2^{6p-2}) (R-1)^{-1}
 + 2^{6p-2} (R-1)^{-(6p-1)}
 \\ & = (2 + 2^{6p-2})
\left[\left(\sup_{\ell \geq b-a-1} |C(\ell)| \right)^{1/(12p-2)}
+ \left(\sup_{\ell \geq b-a-1} |C(\ell)| \right)^{1/2} \right]
\\ & \leq (2 + 2^{6p-2})
\left(\sup_{\ell \geq b-a-1} |C(\ell)| \right)^{1/(12p-2)} \left( 1 + C(0)^{1/2- 1/(12p-2)} \right). 
\end{align*}
We then get 
\begin{align*}
    & |\mathbb{E} [ XY] - \mathbb{E} [X] \mathbb{E}[Y] | 
\\ \leq \,& K_9(p) (C(0)- C(1))^{-1/4p} (1 + C(0)^{p+1 + 1/4p - 1/(12p-2)} )\left(\sup_{\ell \geq b-a-1} |C(\ell)| \right)^{1/(12p-2)}  
\end{align*}
for $K_9(p) > 0$ depending only on $p$.

\end{proof}
We deduce from the result above 
a bound on the joint cumulants of 
terms appearing on 
the PRSA analysis. 
\begin{prop}[Bounds for cumulants of PRSA terms] \label{boundcumulants}
Let \( (x_n)_{n \in \mathbb{Z}} \) be a centered stationary Gaussian process with covariance function \( C(k) = \mathbb{E}[x_0 x_k] \) such that $C(0) > |C(1)|$, and define increments \( w_n := x_n - x_{n-1} \). 
For integers $p, r, s \geq 1$, $a < b$, let $Y_1, \dots, Y_r$ by random variables equal to the product of at most $p$ random factors among $x_n$ and $\mathds{1}_{w_n > c}$ for $n \leq a$, i.e. they have the same form as $X$ in Proposition \ref{covarianceboundPRSA}, and let $Y_{r+1}, \dots, Y_{r+s}$ be random variables equal to the product of at most $p$ factors among $x_n$ and $\mathds{1}_{w_n > c}$ for $n \geq b$, i.e. they have the same form as $Y$ in Proposition \ref{covarianceboundPRSA}. 
Then, the absolute value of the joint cumulant of order $r+s$ of
the random variables $Y_1, \dots, Y_{r+s}$ is well-defined and bounded by
$$K (p, r,s, C(0), C(1)) \left(\sup_{\ell \geq b-a-1} |C(\ell)| \right)^{1/(12p(r+s)-2)}$$
for $K (p, r,s, C(0), C(1)) > 0$ depending only on $p$, $r$, $s$, $C(0)$ and $C(1)$. 
\end{prop}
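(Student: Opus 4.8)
The plan is to reduce the cumulant bound to the covariance bound of Proposition~\ref{covarianceboundPRSA} via the moment--cumulant relation combined with a decoupling (``factorization'') trick. Write $I := \{1,\dots,r\}$ for the indices of the left-supported factors (so every $x_n$ or $\mathds{1}_{w_n>c}$ occurring in $Y_i$, $i\in I$, has $n\le a$) and $J := \{r+1,\dots,r+s\}$ for the right-supported ones ($n\ge b$). Since each $Y_i$ is a product of at most $p$ factors of the form $x_n$ or $\mathds{1}_{w_n>c}$, all joint moments of the $Y_i$ are finite (H\"older's inequality, finiteness of all Gaussian moments, boundedness of the indicators), so the joint cumulant $\kappa := \kappa(Y_1,\dots,Y_{r+s})$ is well defined and has the moment expansion $\kappa = \sum_{\pi} \varrho(\pi)\prod_{B\in\pi} m(B)$, the sum over set partitions $\pi$ of $\{1,\dots,r+s\}$, with $\varrho(\pi) = (-1)^{|\pi|-1}(|\pi|-1)!$ and $m(B) := \mathbb{E}\bigl[\prod_{i\in B} Y_i\bigr]$.

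Next I would introduce the \emph{decoupled} moments $\widetilde m(B) := \mathbb{E}\bigl[\prod_{i\in B\cap I} Y_i\bigr]\,\mathbb{E}\bigl[\prod_{i\in B\cap J} Y_i\bigr]$, which coincide with $m(B)$ whenever $B\subseteq I$ or $B\subseteq J$. These are exactly the joint moments of the family $(\widehat Y_i)_{i=1}^{r+s}$ obtained by keeping $(Y_i)_{i\in I}$ and replacing $(Y_j)_{j\in J}$ by an independent copy; hence $\widetilde\kappa := \sum_{\pi}\varrho(\pi)\prod_{B\in\pi}\widetilde m(B)$ is the joint cumulant of $(\widehat Y_i)$, and it vanishes because a joint cumulant of a collection that splits into two mutually independent groups with at least one index in each (here $r,s\ge 1$) is zero. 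Consequently $\kappa = \kappa - \widetilde\kappa = \sum_{\pi}\varrho(\pi)\bigl(\prod_{B\in\pi}m(B) - \prod_{B\in\pi}\widetilde m(B)\bigr)$.

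It then remains to estimate each difference $\prod_{B\in\pi} m(B) - \prod_{B\in\pi}\widetilde m(B)$ by telescoping over blocks. Only blocks $B$ meeting both $I$ and $J$ contribute, and for such a block $m(B)-\widetilde m(B) = \mathbb{E}[X_B Y_B] - \mathbb{E}[X_B]\,\mathbb{E}[Y_B]$, where $X_B := \prod_{i\in B\cap I} Y_i$ is a product of at most $p\,|B\cap I|\le p(r+s)$ factors supported on $n\le a$ and $Y_B := \prod_{i\in B\cap J} Y_i$ a product of at most $p(r+s)$ factors supported on $n\ge b$. Applying Proposition~\ref{covarianceboundPRSA} with $p$ replaced by $p(r+s)$ bounds each such term by a constant depending on $p(r+s)$, $C(0)$, $C(1)$ times $\bigl(\sup_{\ell\ge b-a-1}|C(\ell)|\bigr)^{1/(12p(r+s)-2)}$, while the remaining factors $m(B')$ and $\widetilde m(B')$ in the telescoped product are bounded by constants depending only on $p(r+s)$ and $C(0)$ (H\"older plus Gaussian moment bounds, exactly as in the proof of Proposition~\ref{covarianceboundPRSA}). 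Summing over the finitely many (Bell-number $B_{r+s}$) partitions of $\{1,\dots,r+s\}$ and the at most $r+s$ terms in each telescoping sum yields the claimed estimate with the stated exponent and a constant depending only on $p,r,s,C(0),C(1)$.

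The main obstacle is the clean justification of $\widetilde\kappa = 0$: one must set up the auxiliary independent-copy family so that $\widetilde m(B)$ genuinely is one of its joint moments, and then invoke, or re-derive from the moment--cumulant formula, the classical fact that mixed cumulants of two independent blocks vanish. This is precisely what upgrades the crude triangle-inequality estimate of $\kappa$ (which would only give boundedness) into honest decay. The remainder is bookkeeping: checking that no auxiliary product ever carries more than $p(r+s)$ elementary factors, so that Proposition~\ref{covarianceboundPRSA} applies with the uniform parameter $p(r+s)$, and then collecting the $p$-, $r$-, $s$-, $C(0)$-, $C(1)$-dependent constants.
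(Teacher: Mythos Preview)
Your proposal is correct and follows essentially the same route as the paper: both proofs use the moment--cumulant expansion, subtract the corresponding expression for the decoupled (independent-copy) family to get zero, then telescope the difference block by block and apply Proposition~\ref{covarianceboundPRSA} with $p$ replaced by $p(r+s)$ to the single ``mixed'' block, bounding the remaining factors by H\"older and Gaussian moment estimates. Your write-up is slightly more explicit (naming the Bell number, the exact M\"obius coefficients, and the telescoping) but there is no substantive difference in strategy.
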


\begin{proof}
Since Gaussian variables are in $L^q$ 
for all $q \geq 1$, the variables 
$Y_1, \dots, Y_{r+s}$ have joint moments 
of all orders, and then joint cumulants of all orders. 
The joint cumulant of $Y_1, \dots, Y_{r+s}$ can be written as 

$$\sum_{\Pi \in \mathcal{P} (\{1, \dots, r+s\}) } \alpha(\Pi) \prod_{A \in \Pi} 
\mathbb{E} \left[ \prod_{j \in A} 
Y_{j}\right],
$$
$ \mathcal{P} (\{1, \dots, r+s\})$ being the set of all partitions $\Pi$ of 
$\{1, \dots, r+s\}$, $A$ being a subset 
of $\{1, \dots, r+s\}$ in the partition $\Pi$, and $\alpha (\Pi)$ being a coefficient depending only on $\Pi$ (and then on $r+s$). 
Now, 
\begin{equation}\sum_{\Pi \in \mathcal{P} (\{1, \dots, r+s\}) } \alpha(\Pi) \prod_{A \in \Pi} 
\mathbb{E} \left[ \prod_{j \in A, 1 \leq j \leq r} 
Y_{j}\right] \, \mathbb{E} \left[ \prod_{j \in A, r+1 \leq j \leq r+s} 
Y_{j}\right] = 0, \label{vanishcumulant}
\end{equation}
because the left-hand side of this equality is equal to the joint cumulant 
of $Y'_1, \dots, Y'_{r+s}$, where 
$(Y'_1, \dots, Y'_r)$ and 
$(Y'_{r + 1}, \dots, Y'_{r+s})$
are two independent families of random variables, respectively distributed as 
$(Y_1, \dots, Y_r)$ and 
$(Y_{r + 1}, \dots, Y_{r+s})$. Indeed, 
the joint cumulant of the union of two independent families of random variables vanishes if it is well-defined. 
The joint cumulant of $Y_1, \dots, Y_{r+s}$ is then equal to the sum of the modifications of the left-hand side 
of \eqref{vanishcumulant} when 
we replace one by one each factor 
$$\mathbb{E} \left[ \prod_{j \in A, 1 \leq j \leq r} 
Y_{j}\right] \, \mathbb{E} \left[ \prod_{j \in A, r+1 \leq j \leq r+s} 
Y_{j}\right] $$
by 
$$\mathbb{E} \left[ \prod_{j \in A} 
Y_{j}\right]$$
for each $\Pi \in \mathcal{P} (\{1, \dots, r+s\})$ and $A \in \Pi$. 
Using triangle and H\"older inequalities, 
we deduce that the joint cumulant of 
$Y_1, \dots, Y_{r+s}$ has a modulus bounded 
by 
\begin{align*} 
& \sum_{\Pi \in \mathcal{P} (\{1, \dots, r+s\}) } |\alpha(\Pi)|\sum_{A \in \Pi} 
\prod_{B \in \Pi, B \neq A}
\prod_{j \in B} 
\left(\mathbb{E} [ |Y_j|^{\operatorname{Card}(B)}]\right)^{1/\operatorname{Card}(B)}
\\ & 
\times\left| \mathbb{E} \left[ \prod_{j \in A} 
Y_{j}\right]   - \, \mathbb{E} \left[ \prod_{j \in A, 1 \leq j \leq r} 
Y_{j}\right] \, \mathbb{E} \left[ \prod_{j \in A, r+1 \leq j \leq r+s} 
Y_{j}\right] \right| \,.
\end{align*}
Since $Y_j$ is bounded by a product 
of at most $p$ Gaussian variables of 
variance dominated by $C(0)$,
and since the number of 
terms of the sums and factors in the products, as well as $|\alpha(\Pi)|$ and the cardinality of $B$, are bounded by a quantity depending only on $r$ and $s$, 
we deduce that the joint cumulant 
 of 
$Y_1, \dots, Y_{r+s}$ has a modulus bounded 
by $K_1 (p, r,s, C(0))$ times the supremum, for the subsets $A$ of $\{1, \dots, r+s\}$, of 
$$\left| \mathbb{E} \left[ \prod_{j \in A} 
Y_{j}\right]   - \, \mathbb{E} \left[ \prod_{j \in A, 1 \leq j \leq r} 
Y_{j}\right] \, \mathbb{E} \left[ \prod_{j \in A, r+1 \leq j \leq r+s} 
Y_{j}\right] \right| $$
where $K_1 (p, r,s, C(0)) > 0$ 
depends only on $p, r, s, C(0)$. 
The last quantity is bounded by using Proposition \ref{covarianceboundPRSA},
with $p$ replaced by $p(r+s)$ since
the product of $Y_j$ for $j \in A$ has at most $p(r+s)$ factors of the form $\mathds{1}_{w_n > c}$ or $x_n$. 
This provides the estimate of 
Proposition \ref{boundcumulants}.

\end{proof}

\subsection{Law of large numbers for stationary Gaussian processes}

The entries of the PRSA-averaged signal \( z_{n,L} \) are computed by dividing a weighted sum of signal values (numerator) by the number of hinge points (denominator). While the denominator appears to be a normalization factor, both the numerator and denominator involve the same hinge-point selection, leading to statistical dependence.
For stationary Gaussian process with covariance going to zero, we can prove a law of large numbers. 
\begin{prop} \label{lawlargenumbers}
Let $(x_n)_{n \in \mathbb{Z}}$ be a centered stationary Gaussian process with covariance function $C(k) = \mathbb{E} [ x_0 x_k]$, and define 
increments $w_n := x_n - x_{n-1}$. 
We assume that the covariance function $C$ tends to zero at infinity. Then, 
for all $\ell \in \mathbb{Z}$,
$$\frac{1}{2n+1} \sum_{m=-n}^n 
x_{m+\ell} \mathds{1}_{w_m > c} 
\underset{n \rightarrow \infty}{\longrightarrow} \mathbb{E}[ x_{\ell} \mathds{1}_{w_0 > c} ]$$
and 
$$\frac{1}{2n+1} \sum_{m=-n}^n 
 \mathds{1}_{w_m > c} 
\underset{n \rightarrow \infty}{\longrightarrow}\mathbb{E}[ \mathds{1}_{w_0 > c} ] = \mathbb{P} (w_0 > c)$$
in probability.
If $C(0) > |C(1)|$, for any $\ell\in \mathbb{N}$, we deduce the convergence in probability of
$$ \frac{  \sum_{m=-n}^n 
x_{m+\ell} \mathds{1}_{w_m > c} }{ \sum_{m=-n}^n 
 \mathds{1}_{w_m > c} } $$ to 
$$\zeta_{\ell} := \frac{\mathbb{E}[ x_{\ell} \mathds{1}_{w_0 > c} ]}{\mathbb{E}[ \mathds{1}_{w_0 > c} ] }\,.
$$
Then, for any $L \in \mathbb{N}$, the convergence in probability of $z_{n,L}$ to $(\zeta_{\ell})_{- L \leq \ell \leq L}$. 
\end{prop}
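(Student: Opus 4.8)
The plan is to establish $L^2$ convergence of the numerator and denominator empirical averages (hence convergence in probability), and then deduce the statements for the ratio and for the vector $z_{n,L}$ by elementary arguments. Fix $\ell \in \mathbb{Z}$ and set $Y_m := x_{m+\ell}\,\mathds{1}_{w_m > c}$; by stationarity of $(x_n)_{n \in \mathbb{Z}}$ the sequence $(Y_m)_{m \in \mathbb{Z}}$ is stationary, with $\mathbb{E}[Y_m] = \mathbb{E}[x_\ell \mathds{1}_{w_0 > c}]$ (finite, since $|Y_m| \le |x_{m+\ell}| \in L^1$) and $\mathbb{E}[Y_m^2] \le \mathbb{E}[x_{m+\ell}^2] = C(0) < \infty$. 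Writing $\gamma(k) := \operatorname{Cov}(Y_0, Y_k)$ and counting lags,
\begin{align*}
\operatorname{Var}\!\left(\frac{1}{2n+1}\sum_{m=-n}^{n} Y_m\right)
&= \frac{1}{(2n+1)^2}\sum_{|d| \le 2n}(2n+1-|d|)\,\gamma(d)\\
&\le \frac{1}{2n+1}\sum_{|d| \le 2n}|\gamma(d)|,
\end{align*}
so once we show $\gamma(k) \to 0$ as $k \to \infty$, Ces\`{a}ro's lemma forces the right-hand side to $0$; together with $\mathbb{E}\big[(2n+1)^{-1}\sum_{m=-n}^{n} Y_m\big] = \mathbb{E}[x_\ell \mathds{1}_{w_0 > c}]$ this yields $L^2$-, hence in-probability-, convergence to $\mathbb{E}[x_\ell \mathds{1}_{w_0 > c}]$. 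The denominator is treated identically, with $Y_m$ replaced by the bounded variable $\mathds{1}_{w_m > c}$.

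The crux is the covariance decay $\gamma(k) \to 0$. Observe that $Y_0$ is a measurable function of the Gaussian vector $(x_\ell, x_0, x_{-1})$ and $Y_k$ of $(x_{k+\ell}, x_k, x_{k-1})$, and every cross-covariance between a coordinate of the first vector and one of the second is $C$ evaluated at a lag tending to $+\infty$ with $k$; hence the joint law of these six variables converges, as $k \to \infty$, to the law of two independent copies of $(x_\ell, x_0, x_{-1})$. The map $(u,v,w,u',v',w') \mapsto u\,\mathds{1}_{v-w>c}\,u'\,\mathds{1}_{v'-w'>c}$ is continuous off $\{v-w=c\} \cup \{v'-w'=c\}$, a null set for the limiting law because $\operatorname{Var}(w_0) = 2(C(0)-C(1)) > 0$: indeed $C(1) = C(0)$ would force $x_1 = x_0$ a.s., hence $x_n \equiv x_0$ a.s.\ by stationarity, contradicting $C(k) \to 0$ (assuming $C(0) > 0$; the case $C(0) = 0$ is trivial). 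Since $(Y_0 Y_k)^2 \le x_\ell^2 x_{k+\ell}^2$ and $\mathbb{E}[x_\ell^2 x_{k+\ell}^2] = C(0)^2 + 2C(k)^2 \le 3C(0)^2$, the family $\{Y_0 Y_k\}_k$ is uniformly integrable, so the weak convergence upgrades to $\mathbb{E}[Y_0 Y_k] \to \mathbb{E}[x_\ell \mathds{1}_{w_0>c}]^2$, i.e.\ $\gamma(k) \to 0$. (Alternatively, since $C(0) > |C(1)|$ holds automatically under our hypotheses, one may invoke Proposition~\ref{covarianceboundPRSA} with $p = 2$ for a quantitative bound $|\gamma(k)| \le K\,\big(\sup_{j \ge k-|\ell|-2}|C(j)|\big)^{1/22} \to 0$.)

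It remains to pass to the ratio and to the vector. The two $L^2$ limits give, in probability, $\frac{1}{2n+1}\sum_{m=-n}^{n} x_{m+\ell}\mathds{1}_{w_m>c} \to \mathbb{E}[x_\ell \mathds{1}_{w_0>c}]$ and $\frac{1}{2n+1}\sum_{m=-n}^{n}\mathds{1}_{w_m>c} \to \mathbb{P}(w_0>c)$. Under $C(0) > |C(1)|$ the variable $w_0$ has positive variance, so $\mathbb{P}(w_0>c) > 0$; applying the continuous mapping theorem to $(a,b) \mapsto a/b$ on a neighbourhood of $(\mathbb{E}[x_\ell \mathds{1}_{w_0>c}],\, \mathbb{P}(w_0>c))$, where it is continuous, gives convergence in probability of $z_{n,L}(\ell)$ to $\zeta_\ell$. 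This holds for each of the finitely many $\ell \in \{-L, \dots, L\}$, and since componentwise convergence in probability in $\mathbb{R}^{2L+1}$ is equivalent to convergence of the vector, $z_{n,L} \to (\zeta_\ell)_{-L \le \ell \le L}$ in probability. The hardest step is the covariance-decay argument: because each numerator term pairs an unbounded Gaussian factor with a discontinuous indicator, one must control integrability (to turn distributional convergence into convergence of expectations) while ensuring the indicator functional is almost surely continuous under the limiting Gaussian law.
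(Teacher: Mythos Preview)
Your proof is correct and follows the same overall skeleton as the paper's---show the empirical averages converge in $L^2$ by proving the autocovariance $\gamma(k)\to 0$, then pass to the ratio---but you handle the key covariance-decay step differently. The paper invokes Proposition~\ref{covarianceboundPRSA} with $p=2$ to obtain a quantitative bound $|\gamma(k)|\lesssim\big(\sup_{j\ge k-2|\ell|-1}|C(j)|\big)^{1/22}$, whereas your primary argument is softer and more self-contained: the joint Gaussian law of the two triples converges (since cross-covariances vanish), the product functional is a.s.\ continuous under the limit, and uniform integrability (via the Isserlis bound $\mathbb{E}[x_\ell^2 x_{k+\ell}^2]\le 3C(0)^2$) upgrades distributional convergence to convergence of expectations. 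Your route avoids the lengthy smoothing machinery of Proposition~\ref{covarianceboundPRSA} and needs only $C(0)>C(1)$ (which you correctly derive from $C(k)\to 0$), whereas the paper's route reuses a tool already built for the CLT and gives explicit rates. One tiny remark: when you assert ``$C(0)>|C(1)|$ holds automatically'' to justify the alternative via Proposition~\ref{covarianceboundPRSA}, you only checked the case $C(1)=C(0)$; the case $C(1)=-C(0)$ is dispatched just as easily ($x_1=-x_0$ a.s.\ forces $C(2k)=C(0)$), but strictly speaking it should be mentioned.
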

\begin{proof}
Let $s$ be equal to $0$ or $1$ and fix $\ell\in \{-L,\ldots,L\}$.  
We have 
$$\mathbb{E} \left[ \frac{1}{2n+1} \sum_{m=-n}^n 
x^s_{m+\ell} \mathds{1}_{w_m > c} 
\right] = \mathbb{E} [x_{\ell}^s \mathds{1}_{w_0 > c}].$$
By Chebyshev's inequality, it is then sufficient to check that 
$$\operatorname{Var} 
\left(\frac{1}{2n+1} \sum_{m=-n}^n 
x^s_{m+\ell} \mathds{1}_{w_m > c} \right) \underset{n \rightarrow \infty}{\longrightarrow} 0, $$
i.e. using linearity of the covariance and stationarity of $(x_n)_{n \in \mathbb{Z}}$, 
$$\frac{1}{(2n+1)^2} 
\sum_{m = -2n}^{2n} (2n+1 - m) \operatorname{Cov} 
 (x_{\ell}^s \mathds{1}_{w_0 > c}, 
 x_{m+\ell}^s \mathds{1}_{w_m > c})
 \underset{n \rightarrow \infty}{\longrightarrow} 0
 $$
 since there are $2n+1 - m$ couples of integers $(m_1, m_2)$ in $\{-n, \dots, n\}$ such that $m_2 - m_1 = m$. 
Applying Proposition \ref{covarianceboundPRSA} with $p = 2$, 
$a = |\ell|$, $b = m - |\ell|$
if $m \geq 2 |\ell| + 1$, and $p=2$, $a = m + |\ell|$, $b = -|\ell|$
if $m \leq -2 |\ell| - 1$, 
we deduce that for $|m| \geq 2 |\ell| +1 $, 
$$|\operatorname{Cov} 
 (x_{\ell}^s \mathds{1}_{w_0 > c}, 
 x_{m+\ell}^s \mathds{1}_{w_m > c})| 
 \leq K(2) (C(0)- C(1))^{-1/8}(1 + C(0)^{3+1/8-1/22}) 
  \left( \underset{ k \geq |m| - 2 |\ell| - 1}{\sup} |C(k)| \right)^{1/22}.$$
For $|m| \leq 2 |\ell|$, we use 
Cauchy-Schwarz inequality and stationarity to get
\begin{align*}|\operatorname{Cov} 
 (x_{\ell}^s \mathds{1}_{w_0 > c}, 
 x_{m+\ell}^s \mathds{1}_{w_m > c})| 
& \leq 
\sqrt{\operatorname{Var} 
 (x_{\ell}^s \mathds{1}_{w_0 > c} ) 
 \operatorname{Var} 
 (x_{m+\ell}^s \mathds{1}_{w_m > c} ) 
 }
\\ &  = \operatorname{Var} 
 (x_{\ell}^s \mathds{1}_{w_0 > c} ) 
 \leq \mathbb{E} [x_{\ell}^{2s} \mathds{1}_{w_0 > c} ]
 \\ & \leq \mathbb{E} [ x_{\ell}^{2s}]
 \leq \mathbb{E} [ 1 +  x_{\ell}^{2}]
 = 1 + C(0)\,.  
\end{align*}
Since $L$ is fixed, it is then sufficient to prove 
$$\frac{1}{(2n+1)^2} 
\sum_{m = -2n}^{2n} (2n+1 - m) 
 \left( \underset{ k \geq |m| - 2 |\ell| - 1}{\sup} |C(k)| \right)^{1/22}\underset{n \rightarrow \infty}{\longrightarrow} 0\,,
 $$
which can be shown, 
via the change of variable 
$m = \lfloor (2n+1) t \rfloor$, by controlling the integral 
$$\int_{-1}^{1} 
\left( 1 - \frac{\lfloor (2n+1) t \rfloor}{2n+1} \right)
\left( \underset{ k \geq |\lfloor (2n+1) t \rfloor| - 2 |\ell| - 1}{\sup} |C(k)| \right)^{1/22} dt.
$$
The quantity to integrate is 
bounded by $C(0)^{1/22}$, and tend to zero 
at each $t$ different from zero, by 
the assumption that $C(k) \rightarrow 0$
when $k \rightarrow \infty$. The desired convergence is then deduced from dominated convergence.

\end{proof}

The value of $\zeta_{\ell}$ can be computed explicitly. 
Keeping the notation of Proposition \ref{lawlargenumbers}, we get the following:

\begin{prop}
Assume that $C(0) > 0$ and $C(0) > C(1)$. Then for any $\ell \in \mathbb{Z}$,
\begin{align}\label{expression of zeta in LNN}
\zeta_{\ell} = 
\frac{ C(\ell) - C(\ell+1) }{ \sqrt{4\pi (C(0) - C(1))} } e^{-c^2 / 4(C(0) - C(1))} 
\cdot Q\left( \frac{c}{\sqrt{2(C(0) - C(1))}} \right)^{-1},
\end{align}
where $Q(x) := \int_x^\infty \frac{1}{\sqrt{2\pi}} e^{-u^2/2} \, du$ is the standard Gaussian tail function.
\end{prop}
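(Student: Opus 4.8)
The plan is to evaluate the numerator $\mathbb{E}[x_\ell \mathds{1}_{w_0 > c}]$ and the denominator $\mathbb{E}[\mathds{1}_{w_0 > c}] = \mathbb{P}(w_0 > c)$ separately, using only the joint Gaussianity of the pair $(x_\ell, w_0)$. First I would record the relevant second moments. Since $w_0 = x_0 - x_{-1}$ and the process is centered and stationary with covariance $C$, we have $\operatorname{Var}(w_0) = 2C(0) - 2C(1) = 2(C(0) - C(1))$, which is strictly positive by the hypothesis $C(0) > C(1)$; hence $w_0$ is a nondegenerate centered Gaussian variable and $\mathbb{P}(w_0 > c) > 0$, so that $\zeta_\ell$ is well defined. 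Likewise $\operatorname{Cov}(x_\ell, w_0) = \mathbb{E}[x_\ell x_0] - \mathbb{E}[x_\ell x_{-1}] = C(\ell) - C(\ell + 1)$.

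For the numerator I would invoke the standard Gaussian projection identity. Writing $\beta := \operatorname{Cov}(x_\ell, w_0)/\operatorname{Var}(w_0)$ and $Z := x_\ell - \beta w_0$, the pair $(Z, w_0)$ is jointly Gaussian and uncorrelated, hence independent; since $\mathbb{E}[Z] = 0$ this gives $\mathbb{E}[x_\ell \mathds{1}_{w_0 > c}] = \beta\, \mathbb{E}[w_0 \mathds{1}_{w_0 > c}]$. It then remains to compute the one-dimensional integral $\mathbb{E}[w_0 \mathds{1}_{w_0 > c}]$: with $\sigma^2 := \operatorname{Var}(w_0)$, integrating the explicit antiderivative of $y\,e^{-y^2/2\sigma^2}$ yields $\mathbb{E}[w_0 \mathds{1}_{w_0 > c}] = \frac{\sigma}{\sqrt{2\pi}}\, e^{-c^2/2\sigma^2}$. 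Substituting $\sigma^2 = 2(C(0) - C(1))$ and $\beta = \frac{C(\ell) - C(\ell+1)}{2(C(0) - C(1))}$ gives $\mathbb{E}[x_\ell \mathds{1}_{w_0 > c}] = \frac{C(\ell) - C(\ell+1)}{\sqrt{4\pi(C(0) - C(1))}}\, e^{-c^2/4(C(0) - C(1))}$.

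For the denominator, $\mathbb{P}(w_0 > c) = \mathbb{P}(\sigma N > c) = Q(c/\sigma) = Q\!\left(c/\sqrt{2(C(0) - C(1))}\right)$, where $N$ is a standard normal variable and $Q$ is the Gaussian tail function. Dividing the numerator by the denominator produces exactly \eqref{expression of zeta in LNN}. There is no serious obstacle here: the computation is routine once the joint law of $(x_\ell, w_0)$ is identified. The only points deserving attention are the nondegeneracy of $w_0$ (which is precisely where the assumption $C(0) > C(1)$ is used, guaranteeing the ratio is well defined) and the elementary fact that uncorrelated jointly Gaussian variables are independent, which legitimizes the projection step; a reader who prefers can instead obtain the identity $\mathbb{E}[x_\ell \mathds{1}_{w_0 > c}] = \frac{\operatorname{Cov}(x_\ell, w_0)}{\operatorname{Var}(w_0)}\, \mathbb{E}[w_0 \mathds{1}_{w_0 > c}]$ directly from the bivariate Gaussian density by an explicit change of variables.
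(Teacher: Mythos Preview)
Your proof is correct and follows essentially the same approach as the paper: both use the Gaussian regression decomposition $x_\ell = \beta w_0 + Z$ with $Z$ independent of $w_0$, compute the one-dimensional integral $\mathbb{E}[w_0\mathds{1}_{w_0>c}]$ explicitly, and divide by $\mathbb{P}(w_0>c)=Q(c/\sigma)$. The only cosmetic difference is that the paper routes the computation through the conditional expectation $\mathbb{E}[x_\ell\mid w_0>c]$ before multiplying back by $\mathbb{P}(w_0>c)$, whereas you compute $\mathbb{E}[x_\ell\mathds{1}_{w_0>c}]$ directly; the substance is identical.
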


Note that $\zeta_\ell$ follows the pattern of $C(\ell)-C(\ell+1)$, the difference of covariance structure, and it depends on the threshold $c$ via a linear scaling $e^{-c^2 / 4(C(0) - C(1))} \cdot Q\left( \frac{c}{\sqrt{2(C(0) - C(1))}} \right)^{-1}$. In other words, in practice this formula can be applied to recover the covariance structure of the stationary random process via cumsum of $z_{n,L}$ with various $c$.
\begin{proof}
By assumption,
\begin{align}\label{control var of wm}
\operatorname{Var} (w_m) 
= \mathbb{E} [ w_m^2] 
= \mathbb{E} [ x_m^2] + \mathbb{E}[ x_{m-1}^2]
- 2 \mathbb{E} [ x_m x_{m-1} ]
= 2 (C(0) - C(1)) 
\end{align}
is strictly positive.
We get 
$$\mathbb{P} (w_0 >c) =  Q\left(\frac{c}{\sqrt{ 2 (C(0) - C(1))}}\right).
$$
Moreover, 
\begin{align*}
\mathbb{E}\left[ x_{\ell} \mathds{1}_{w_0 > c}\right]&\,=\mathbb{P} ( w_0 > c) \mathbb{E} [ x_{\ell} | w_0 >  c ]\\
&= Q\left(\frac{c}{\sqrt{ 2(C(0) - C(1))}}\right) \mathbb{E} [  x_{\ell} | w_0 >  c ].
\end{align*}
Since $(x_{\ell}, w_0)$ is a centered Gaussian vector
with the same covariance matrix as 
$$\left( \frac{\operatorname{Cov} ( w_0, x_{\ell}) }{ \operatorname{Var} (w_0)} w_0  + y, w_0 \right),
$$
where $y$ is a centered Gaussian variable, independent of $w_0$, with variance 
$$\operatorname{Var}(x_{\ell}) - 
\frac{\operatorname{Cov}^2 ( w_0, x_{\ell})}{ \operatorname{Var} (w_0)}, 
$$
we deduce that the two vectors have the same joint distribution. 
Denote $N$ to be a standard Gaussian random variable. We have 
\begin{align*}
&\mathbb{E} [ x_{\ell} | w_0 > c]\\
= \,& \mathbb{E} \left[ \frac{\operatorname{Cov} ( w_0, x_{\ell}) }{ \operatorname{Var} (w_0)} w_0  + y
\; \big| w_0 > c\right]
=  \frac{\operatorname{Cov} ( w_0, x_{\ell}) }{ \operatorname{Var} (w_0)}
\mathbb{E} [ w_0 | w_0 > c]  
\\ =  \,& \frac{\operatorname{Cov} ( w_0, x_{\ell}) }{ \sqrt{\operatorname{Var} (w_0)} }
\mathbb{E} [ N | N > c / \sqrt{\operatorname{Var} (w_0)} ]  
\\ =  \,& \frac{\operatorname{Cov} ( x_0 - x_{-1}, x_{\ell}) }{
\sqrt{2 (C(0) - C(1))}} 
\mathbb{E} [ N | N > c / \sqrt{2 (C(0) - C(1))} ]  
\\ = \,&  \frac{ C(\ell) - C(\ell+1) }{
\sqrt{2 (C(0) - C(1))}} 
\mathbb{E} \left[ N  \mathds{1}_{N > c / \sqrt{2 (C(0) - C(1))}} \right]  Q(c/ \sqrt{ 2(C(0) - C(1)) })^{-1}\,.
\end{align*}
We then get 
$$\mathbb{E} \left[ x_{\ell} \mathds{1}_{w_0 > c}
\right] 
= \frac{ C(\ell) - C(\ell+1) }{
\sqrt{2 (C(0) - C(1))}} 
\mathbb{E} \left[ N  \mathds{1}_{N > c / \sqrt{2 (C(0) - C(1))}} \right]. $$
The last expectation is 
$$\frac{1}{\sqrt{2 \pi}} 
\int_{c / \sqrt{2 (C(0) - C(1))}}^{\infty} x e^{-x^2/2} dx 
= \frac{1}{\sqrt{2 \pi}} e^{- c^2/ 4 ( C(0) - C(1))} \,,
$$
which implies 
$$\mathbb{E} \left[  x_{\ell} \mathds{1}_{w_0 > c}
\right] = \frac{ C(\ell) - C(\ell+1) }{
\sqrt{4 \pi (C(0) - C(1))}} e^{- c^2/ 4 ( C(0) - C(1))}. $$
We thus conclude that
$$\frac{\mathbb{E} \left[  x_{\ell} \mathds{1}_{w_0 > c}
\right]}{\mathbb{E} \left[  \mathds{1}_{w_m > c} \right] }
= \frac{ C(\ell) - C(\ell+1) }{
\sqrt{4 \pi (C(0) - C(1))}} e^{- c^2/ 4 ( C(0) - C(1))}
Q\left(\frac{c}{\sqrt{ 2(C(0) - C(1)) }}\right)^{-1}.
$$
\end{proof}

\begin{rem}
The limit $\zeta_{\ell}$ has a natural interpretation: it is the conditional expectation $\mathbb{E}[x_{m+\ell} \mid w_m > c]$ for any $m \in \mathbb{Z}$, while the denominator is the probability $\mathbb{P}(w_m > c)$. Since $w_m = x_m - x_{m-1}$ is Gaussian with variance $2(C(0) - C(1))$, the tail probability $\mathbb{P}(w_m > c)$ is given by $Q\left( c / \sqrt{2(C(0) - C(1))} \right)$.

The conditional mean $\mathbb{E}[x_{m+\ell} \mid w_m > c]$ can be computed explicitly using the linear regression formula for jointly Gaussian variables. The result depends linearly on $C(\ell)$ and $C(\ell+1)$, reflecting the lag-$\ell$ and lag-$(\ell+1)$ covariances between $x_{m+\ell}$ and $x_m, x_{m-1}$, which generate $w_m$.

This shows that the PRSA average converges to a weighted average of future signal values, conditioned on a past increment exceeding the threshold $c$. The dependence on $c$ and the covariance function determines how strongly hinge point selection filters the signal.
\end{rem}

\subsection{Cumulant Estimates and CLT Preparations}

The computations presented above justify the asymptotic expansion of the PRSA statistic $z_{n,L}$
around its limit and establish the probabilistic structure necessary to prove a Central Limit Theorem. By decomposing the ratio and controlling error terms through a Taylor expansion, we reduce the problem to establishing a joint CLT for the numerator and the denominator. To this end, we compute second moments and bound higher-order cumulants using decay properties of the covariance function $C(k)$. In particular, we show that if $C(k)$  decays sufficiently fast, then the cumulants vanish in the limit, which ensures asymptotic Gaussianity. These results provide the technical foundation for the formal CLT in the next subsection. More precisely, the following holds: 
\begin{prop}  \label{convergencecumulanttozero} Let $(x_n)_{n \in \mathbb{Z}}$ be a centered stationary Gaussian process with covariance function $C(k) = \mathbb{E}[x_0 x_k]$ and define increments $w_n := x_n - x_{n-1}$. We assume that $C(0) > |C(1)|$ and that $C$ decays faster than any power at infinity, i.e. for all $A \geq 0$, $C(k) k^A$ tends to zero when $k \rightarrow \infty$. Then, for each real-valued
sequence $(\alpha_{\ell})_{\ell \in \mathbb{Z}}$, 
 finitely many of the $\alpha_{\ell}$'s being different from zero, for each $\beta \in \mathbb{R}$, and for each integer $p \geq 3$, the
 $p$-th cumulant of 
 $$\frac{1}{\sqrt{2n+1}} 
 \left(\sum_{m = -n}^n  \left( \beta +  \sum_{\ell \in \mathbb{Z}}
 \alpha_{\ell} x_{m + \ell} \right) \mathds{1}_{w_m > c} 
\right)  $$
tends to zero when $n$ goes to infinity. 
\end{prop}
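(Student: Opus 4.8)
\emph{Proof idea.} Fix an integer $D\ge 1$ with $\alpha_\ell=0$ for $|\ell|>D$, and set
$$Z_m:=\Bigl(\beta+\sum_{\ell}\alpha_\ell x_{m+\ell}\Bigr)\mathds 1_{w_m>c},\qquad S_n:=\frac1{\sqrt{2n+1}}\sum_{m=-n}^n Z_m .$$
Each $Z_m$ is a linear combination, with coefficients among $\beta$ and the finitely many $\alpha_\ell$, of the random variables $\mathds 1_{w_m>c}$ and $x_{m+\ell}\mathds 1_{w_m>c}$; these are products of at most two factors of the form $x_n$ or $\mathds 1_{w_n>c}$, every index involved lying in $[m-D,m+D]$. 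Since the $p$-th cumulant is multilinear and homogeneous of degree $p$,
$$\kappa_p(S_n)=\frac1{(2n+1)^{p/2}}\sum_{m_1,\dots,m_p=-n}^n\kappa\bigl(Z_{m_1},\dots,Z_{m_p}\bigr),$$
and, expanding each $Z_{m_j}$ into its constituent products (a number of terms bounded uniformly in $n$), the task reduces to showing that $\sum_{m_1,\dots,m_p=-n}^n\bigl|\kappa(P_1,\dots,P_p)\bigr|=O(n)$, where each $P_j$ is a product of at most two factors $x_n$ or $\mathds 1_{w_n>c}$ with indices in $[m_j-D,m_j+D]$.

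The plan is to feed this into the cumulant estimate of Proposition~\ref{boundcumulants}. Given a tuple, sort its entries $m_{(1)}\le\cdots\le m_{(p)}$, let $g:=\max_{1\le i<p}(m_{(i+1)}-m_{(i)})$ be its largest gap, realized between positions $i_0$ and $i_0+1$, and split $\{1,\dots,p\}$ into the nonempty sets $A=\{j:m_j\le m_{(i_0)}\}$ and $B=\{j:m_j\ge m_{(i_0+1)}\}$. When $g>2D$, every factor occurring in $\{P_j\}_{j\in A}$ has index at most $a:=m_{(i_0)}+D$ and every factor in $\{P_j\}_{j\in B}$ has index at least $b:=m_{(i_0)}+g-D>a$, with $b-a-1=g-2D-1$; Proposition~\ref{boundcumulants}, applied with its parameter $p$ equal to $2$ and with $r=|A|$, $s=|B|$, then yields
$$\bigl|\kappa(P_1,\dots,P_p)\bigr|\le K(p)\Bigl(\sup_{\ell\ge g-2D-1}|C(\ell)|\Bigr)^{1/(24p-2)},$$
$K(p)$ being the maximum of the finitely many constants $K(2,|A|,|B|,C(0),C(1))$. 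When $g\le 2D$ the tuple has diameter at most $2(p-1)D$, so by stationarity of $(x_n)$ the cumulant $\kappa(P_1,\dots,P_p)$ takes one of finitely many values and is bounded by a constant depending only on $p$, $D$ and the law.

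What remains is counting. Both bounds depend on the tuple only through the translation-invariant quantity $g$, so mapping $(m_1,\dots,m_p)\mapsto\bigl(\min_jm_j,\,(m_j-\min_km_k)_j\bigr)$ gives
$$\sum_{m_1,\dots,m_p=-n}^n\bigl|\kappa(P_1,\dots,P_p)\bigr|\le(2n+1)\sum_{\substack{\vec m'\in[0,\infty)^p\\ \min_j m'_j=0}}F\bigl(g(\vec m')\bigr),$$
where $F$ is a nonincreasing majorant of the two bounds above. Since $\min_jm'_j=0$ forces $\max_jm'_j\le(p-1)g(\vec m')$, the inner series is dominated by $\sum_{M\ge0}p\,M^{p-1}F\bigl(M/(p-1)\bigr)$, which converges because $C$, hence $\sup_{\ell\ge\,\cdot\,}|C(\ell)|$, decays faster than any power. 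Thus $\sum_{m_1,\dots,m_p}|\kappa(Z_{m_1},\dots,Z_{m_p})|=O(n)$ and $|\kappa_p(S_n)|=O(n^{1-p/2})\to0$, the hypothesis $p\ge3$ being exactly what makes this exponent negative.

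The genuinely hard work is not in this argument but upstream: it is Proposition~\ref{boundcumulants} (resting on Propositions~\ref{covarianceboundPRSA} and~\ref{proposition approximation bound}) that supplies the crucial quantitative decorrelation of nonlinear, non-Lipschitz functionals such as $\mathds 1_{w_m>c}$ at a small but positive power rate in the covariance. Granting that input, the only points requiring care here are the reduction to products of boundedly many factors via multilinearity, the choice of the maximal-gap splitting so that Proposition~\ref{boundcumulants} applies with the right separation $b-a-1=g-2D-1$, and the verification that the super-polynomial decay of $C$ dominates both the $O(M^{p-1})$ growth in the number of gap-$M$ configurations and the $n^{-p/2}$ normalization.
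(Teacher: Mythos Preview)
Your proof is correct and follows essentially the same route as the paper: multilinearity reduces to joint cumulants of elementary products $x_{m_j+\ell_j}^{s_j}\mathds 1_{w_{m_j}>c}$, the maximal-gap splitting feeds Proposition~\ref{boundcumulants} with its parameter $p$ equal to $2$ and $r+s$ equal to the cumulant order (yielding the exponent $1/(24p-2)$), and the counting of tuples by range/gap combined with the super-polynomial decay of $C$ gives the $O(n)$ bound and hence $|\kappa_p(S_n)|=O(n^{1-p/2})\to 0$. The only cosmetic difference is that the paper organizes the count by the diameter $\mu=\max_j m_j-\min_j m_j$ and invokes pigeonhole to get a gap of size at least $\mu/(p-1)$, whereas you index directly by the maximal gap $g$; the two parametrizations are equivalent.
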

\begin{proof}
The $p$-th cumulant is equal to 
$(2n+1)^{-p/2}$ times the sum, for $m_1, \dots, m_p$
between $-n$ and $n$, of the joint cumulant 
of the $p$ variables 
$$\left(\left( \beta +  \sum_{\ell \in \mathbb{Z}}
 \alpha_{\ell} x_{m_j + \ell} \right) \mathds{1}_{w_{m_j} > c}  \right)_{1 \leq j \leq p}.$$
Since $\alpha_{\ell} = 0$ for all but finitely many values of $\ell$, multilinearity of the joint cumulants shows that is it enough to prove that 
for each $\ell_1, \dots, \ell_p \in \mathbb{Z}$, 
$s_1, \dots, s_{p} \in \{0,1\}$, 
$$(2n+1)^{-p/2}\sum_{-n \leq m_1, \dots, m_p \leq n}
\kappa_p \left( \left( x_{m_j + \ell_j}^{s_j} \mathds{1}_{w_{m_j} > c}\right)_{1 \leq j \leq p} \right)
\underset{n \rightarrow \infty}{\longrightarrow} 0,$$
where $\kappa_p$ denotes the joint cumulant of order $p$.  
Let $\mu$ be the difference between the smallest and the largest of the indices $m_1, \dots, m_p$, 
and $\lambda$ the maximum of $|\ell_j|$ for $1 \leq j \leq p$. By the pigeonhole principle, 
if we rank $m_1, \dots, m_p$ in nondecreasing order, one of the gaps is at least $\mu/(p-1)$.
Hence, there exist $a$ and $b$ such that 
some of the variables $x_{m_j + \ell_j}^{s_j} \mathds{1}_{w_{m_j} > c}$ involve only indices $m_j$ and $m_j + \ell_j$ smaller than or equal to $a$, and all the other variables involve indices 
larger than or equal to $b$, for 
$$b-a \geq \frac{\mu}{p-1} - 2 \lambda.$$
By Proposition \ref{boundcumulants}, we deduce 
\begin{align*}  & \left|\kappa_p \left( \left( x_{m_j + \ell_j}^{s_j} \mathds{1}_{w_{m_j} > c}\right)_{1 \leq j \leq p} \right) \right| 
\\  \leq \,& \sup_{1 \leq r \leq p-1} K (2, r,p-r, C(0), C(1)) \left(\sup_{k \geq \mu/(p-1) - 2 \lambda - 1} |C(k)| \right)^{1/(24p-2)}\,.
\end{align*}
We sum this bound on all $(m_1, \dots, m_p)$
in $([-n,n] \cap \mathbb{Z})^p$. 
For each integer $m \geq 0$, choosing a $p$-tuple 
with $\mu = m$ gives at most $2n+1$ possibilities for $m_1$, and for a given choice of $m_1$, at most $2m+1$ possibilities for each of the integers $m_2, \dots, m_p$. 
We deduce 
\begin{align*}
& \left|(2n+1)^{-p/2}\sum_{-n \leq m_1, \dots, m_p \leq n}
\kappa_p \left( \left( x_{m_j + \ell_j}^{s_j} \mathds{1}_{w_{m_j} > c}\right)_{1 \leq j \leq p} \right) \right|
\\\leq \,& (2n+1)^{-p/2} \sum_{m \geq 0} 
(2n+1) (2m+1)^{p-1} 
\\ & \times \sup_{1 \leq r \leq p-1} K (2, r,p-r, C(0)) \left(\sup_{k \geq m/(p-1) - 2 \lambda - 1} |C(k)| \right)^{1/(24p-2)}.
\end{align*}
The rapid decay of $C(k)$ when $k \rightarrow \infty$ ensures that the sum in $m$ converges, 
and then the left-hand side is dominated by 
$(2n+1)^{1 - p/2}$, which tends to zero for $p \geq 3$. 
\end{proof}
 \subsection{The central limit theorem}
From the result above on cumulants, we deduce the CLT for PRSA statistics
\begin{thm}[Central Limit Theorem for PRSA statistics]\label{central limit theorem for prsa}
Let $(x_n)_{n \in \mathbb{Z}}$ be a centered stationary Gaussian process with covariance function \( C(k) = \mathbb{E}[x_0 x_{k}] \), and define \( w_n := x_n - x_{n-1} \). We assume that $C(0) > 0$, $C(0) > |C(1)|$ and $C(k) k^A \rightarrow 0$ when $k \rightarrow \infty$, for all $A\geq 0$. Fix \( L \in \mathbb{N} \) and \( c \in \mathbb{R} \). Then, the PRSA statistic
$$
z_{n,L} = \left( \frac{  \sum_{m=-n}^n x_{m+\ell} \mathds{1}_{\{w_m > c\}} }
{   \sum_{m=-n}^n \mathds{1}_{\{w_m > c\}} } \right)_{-L \leq \ell \leq L}
$$
satisfies the central limit theorem:
$$
\sqrt{2n+1} \left(z_{n,L} - (\zeta_\ell)_{-L \leq \ell \leq L} \right) \xrightarrow{d} \mathcal{N}(0, V_L),
$$
where \( \zeta_\ell \) is the limiting value given in Proposition ~\ref{lawlargenumbers}, and
$ \mathcal{N} (0, V_L)$ is a Gaussian vector taking values in $\mathbb{R}^{\{-L,-L+1, \dots,L\}}$, with mean $0$ and covariance matrix
$$V_L = (\operatorname{Cov}_{\ell, \ell'})_{-L \leq \ell, \ell' \leq L},$$
for 
$$
\operatorname{Cov}_{\ell, \ell'} =
\mathbb{P}(w_0 > c)^{-2} \sum_{h \in \mathbb{Z}} 
\operatorname{Cov} ((x_{\ell} - \zeta_{\ell}) \mathds{1}_{w_0 > c}, 
(x_{\ell'+h} - \zeta_{\ell'}) \mathds{1}_{w_h > c} ),
$$
where the infinite sum is absolutely convergent, and $\mathbb{P}(w_0 > c) > 0$. 
\end{thm}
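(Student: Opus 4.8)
The plan is to reduce the statement to a joint central limit theorem for the (un-normalized) numerator and denominator defining $z_{n,L}$, and then to prove that scalar statement by the method of cumulants, feeding in the estimates already established. Write $N_{n,\ell} := \frac{1}{2n+1}\sum_{m=-n}^n x_{m+\ell}\mathds{1}_{w_m>c}$ and $D_n := \frac{1}{2n+1}\sum_{m=-n}^n \mathds{1}_{w_m>c}$, so that $z_{n,L}(\ell) = N_{n,\ell}/D_n$ and $z_{n,L}(\ell)-\zeta_\ell = D_n^{-1}(N_{n,\ell}-\zeta_\ell D_n)$. Since $\zeta_\ell = \mathbb{E}[x_\ell\mathds{1}_{w_0>c}]/\mathbb{P}(w_0>c)$ and the process is stationary, each summand of $N_{n,\ell}-\zeta_\ell D_n = \frac{1}{2n+1}\sum_{m=-n}^n (x_{m+\ell}-\zeta_\ell)\mathds{1}_{w_m>c}$ is centered. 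By Proposition~\ref{lawlargenumbers}, $D_n\to q:=\mathbb{P}(w_0>c)$ in probability, and $q=Q\big(c/\sqrt{2(C(0)-C(1))}\big)>0$ because $C(0)>|C(1)|$. Hence, by Slutsky's theorem applied to the random vector, it suffices to show that $W_n:=\big(\sqrt{2n+1}\,(N_{n,\ell}-\zeta_\ell D_n)\big)_{-L\le\ell\le L}$ converges in distribution to a centered Gaussian vector with covariance matrix $\Sigma_L$, where $(\Sigma_L)_{\ell,\ell'}=\sum_{h\in\mathbb{Z}}\operatorname{Cov}\big((x_\ell-\zeta_\ell)\mathds{1}_{w_0>c},(x_{\ell'+h}-\zeta_{\ell'})\mathds{1}_{w_h>c}\big)$; the matrix $V_L$ in the statement is then $q^{-2}\Sigma_L$.

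To prove the convergence of $W_n$ I would use the Cram\'er--Wold device. Fix real numbers $(a_\ell)_{-L\le\ell\le L}$ and set $S_n:=\sum_\ell a_\ell\sqrt{2n+1}\,(N_{n,\ell}-\zeta_\ell D_n)$. Since the indicator $\mathds{1}_{w_m>c}$ does not depend on $\ell$, one has $S_n=\frac{1}{\sqrt{2n+1}}\sum_{m=-n}^n\big(\beta+\sum_{\ell}a_\ell x_{m+\ell}\big)\mathds{1}_{w_m>c}$ with $\beta:=-\sum_\ell a_\ell\zeta_\ell$ and $(a_\ell)$ finitely supported, which is exactly the statistic treated in Proposition~\ref{convergencecumulanttozero}. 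That proposition gives that the $p$-th cumulant of $S_n$ tends to $0$ for every $p\ge 3$; the first cumulant (mean) is identically $0$ because each summand is centered; and the second cumulant equals $\sum_{|h|\le 2n}\big(1-\tfrac{|h|}{2n+1}\big)\operatorname{Cov}(Z_0,Z_h)$ with $Z_m:=\big(\sum_\ell a_\ell(x_{m+\ell}-\zeta_\ell)\big)\mathds{1}_{w_m>c}$, by stationarity. Each $Z_m$ is a finite linear combination of products of at most two factors of the form $x_n$ and $\mathds{1}_{w_n>c}$ with indices within distance $L+1$ of $m$, so Proposition~\ref{covarianceboundPRSA} (with $p=2$), together with bilinearity of the covariance, yields $|\operatorname{Cov}(Z_0,Z_h)|\le \mathrm{const}\cdot\big(\sup_{k\ge |h|-2L-1}|C(k)|\big)^{1/22}$, which is summable in $h$ because $C$ decays faster than any power. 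By dominated convergence the variance of $S_n$ converges to $\sigma^2:=\sum_{h\in\mathbb{Z}}\operatorname{Cov}(Z_0,Z_h)=\sum_{\ell,\ell'}a_\ell a_{\ell'}(\Sigma_L)_{\ell,\ell'}$, a finite nonnegative number. Thus every cumulant of $S_n$ converges to the corresponding cumulant of $\mathcal{N}(0,\sigma^2)$; since the cumulant-to-moment map is polynomial and the Gaussian law is determined by its moments, this forces $S_n\xrightarrow{d}\mathcal{N}(0,\sigma^2)$, and the Cram\'er--Wold device gives $W_n\xrightarrow{d}\mathcal{N}(0,\Sigma_L)$.

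Finally, combining $D_n\to q$ in probability with $W_n\xrightarrow{d}\mathcal{N}(0,\Sigma_L)$ via Slutsky's theorem yields $\sqrt{2n+1}\,\big(z_{n,L}-(\zeta_\ell)_{-L\le\ell\le L}\big)=D_n^{-1}W_n\xrightarrow{d}q^{-1}\mathcal{N}(0,\Sigma_L)=\mathcal{N}(0,q^{-2}\Sigma_L)=\mathcal{N}(0,V_L)$, and the absolute convergence of the series defining the entries of $V_L$ has been established along the way. The genuinely delicate analytic inputs --- quantitative decorrelation of the thresholded increments and the vanishing of higher joint cumulants --- are already supplied by Propositions~\ref{covarianceboundPRSA}, \ref{boundcumulants} and \ref{convergencecumulanttozero}. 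Given these, what remains is essentially bookkeeping: verifying that centering by $\zeta_\ell$ is precisely what makes the cumulant proposition applicable, checking the absolute summability that makes $\sigma^2$ (hence $V_L$) well-defined and finite, and upgrading convergence of all cumulants to convergence in distribution. I expect the summability/identification-of-the-limit step, where one must carefully track the finitely many lag parameters $\ell,\ell'\in\{-L,\dots,L\}$ through the covariance bound, to be the most laborious part; non-degeneracy of the denominator, the obvious potential pitfall, is ruled out by the hypothesis $C(0)>|C(1)|$.
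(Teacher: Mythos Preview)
Your proof is correct and follows essentially the same route as the paper: both arguments reduce to a scalar CLT via linear combinations, invoke Proposition~\ref{convergencecumulanttozero} for the vanishing of cumulants of order $\geq 3$, use Proposition~\ref{covarianceboundPRSA} plus dominated convergence for the second cumulant, and finish with Slutsky. The only organizational difference is that the paper first proves a joint CLT for the $(2L+2)$-dimensional vector of centered numerators and denominator separately and then combines their covariances algebraically, whereas you fold the $\zeta_\ell$-centering in at the outset and work directly with the $(2L+1)$-dimensional vector $W_n$, which is slightly cleaner but not substantively different.
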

\begin{proof}
We first prove CLT for 
the random vector $((X_{n,\ell})_{-L \leq \ell \leq L}, Y_n)\in \mathbb{R}^{2L+2}$, for 
$$X_{n,\ell} := \frac{1}{\sqrt{2 n+1}}  \sum_{m=-n}^n \left( x_{m+\ell} \mathds{1}_{w_m > c} - \mathbb{E} [x_{\ell} \mathds{1}_{w_0 > c}] \right) $$
and
$$Y_n :=  \frac{1}{\sqrt{2 n+1}}  \sum_{m=-n}^n  (\mathds{1}_{w_m > c} - \mathbb{P} (w_0 > c) ).$$
This vector is centered by construction.  
Moreover, by Proposition \ref{convergencecumulanttozero}, all linear combinations of components of the vector $((X_{n,\ell})_{-L \leq \ell \leq L}, Y_n)$ have cumulants of order larger than or equal to $3$ which tend to zero when $n$ goes to infinity.
In order to get a CLT, it is 
then sufficient to check the convergence of the covariance matrix of the random vector, since Gaussian distributions are characterized by their moments. 
For $-L \leq \ell, \ell' \leq L$, the covariance of $X_{n, \ell}$ and $X_{n, \ell'}$ is equal to 
$$\frac{1}{2n+1}\sum_{-n \leq m_1, m_2 \leq n} \operatorname{Cov} (x_{m_1 + \ell} \mathds{1}_{w_{m_1} > c},x_{m_2 + \ell'} \mathds{1}_{w_{m_2} > c} ),$$
and then, by letting $h = m_2 - m_1$ and using stationarity, we have
$$
\text{Cov}(X_{n,\ell},X_{n,\ell'})=\sum_{-2n \leq h \leq 2n} \frac{2n+1-h}{2n+1} \operatorname{Cov} (x_{  \ell} \mathds{1}_{w_0 > c},x_{h+ \ell'} \mathds{1}_{w_h > c} ).$$
From the rapid decay of $C$ at infinity and Proposition \ref{covarianceboundPRSA}, we deduce 
that 
$$\sum_{h \in \mathbb{Z}} |\operatorname{Cov} (x_{  \ell} \mathds{1}_{w_0 > c},x_{h+ \ell'} \mathds{1}_{w_h > c} )| < \infty,$$
 and then, by dominated convergence, 
$$\operatorname{Cov} (X_{n, \ell}, X_{n, \ell'} ) 
\underset{n \rightarrow \infty}{\longrightarrow} 
\sum_{h \in \mathbb{Z}} \operatorname{Cov} (x_{  \ell} \mathds{1}_{w_0 > c},x_{h+ \ell'} \mathds{1}_{w_h > c} ) =: \operatorname{Cov}^{(XX)}_{\ell, \ell'}\,.
$$
Similarly, 
$$\operatorname{Cov} (X_{n, \ell}, Y_n ) 
\underset{n \rightarrow \infty}{\longrightarrow} 
\sum_{h \in \mathbb{Z}} \operatorname{Cov} (x_{  \ell} \mathds{1}_{w_0 > c},\mathds{1}_{w_h > c} ) 
=: \operatorname{Cov}^{(XY)}_{\ell}
$$
and 
$$\operatorname{Var} ( Y_n ) 
\underset{n \rightarrow \infty}{\longrightarrow} 
\sum_{h \in \mathbb{Z}} \operatorname{Cov} ( \mathds{1}_{w_0 > c},\mathds{1}_{w_h > c} ) =:\operatorname{Cov}^{(YY)}\,.
$$
We then have proven that 
$((X_{n,\ell})_{-L \leq \ell \leq L}, Y_n)$
converges to a centered Gaussian vector with covariance matrix given by  $\operatorname{Cov}^{(XX)}_{\ell, \ell'}$, $\operatorname{Cov}^{(XY)}_{\ell}$ and $\operatorname{Cov}^{(YY)}$.  
Now, we have 
$$z_{n, L} = \left(\frac{ (2n+1) \mathbb{E}[x_{\ell} \mathds{1}_{w_0 > c} ] + \sqrt{2n+1} X_{n, \ell}}{
(2n+1) \mathbb{P} (w_0 > c) + \sqrt{2n+1} Y_n} \right)_{-L \leq \ell \leq L}, $$
$$z_{n, L} - (\zeta_{\ell})_{-L \leq \ell \leq L}= \left(\frac{   \mathbb{E}[x_{\ell} \mathds{1}_{w_0 > c} ] +  X_{n, \ell}/\sqrt{2n+1}}{
  \mathbb{P} (w_0 > c) + Y_n/\sqrt{2n+1}} - 
  \frac{ \mathbb{E}[x_{\ell} \mathds{1}_{w_0 > c} ] }{
  \mathbb{P} (w_0 > c)}\right)_{-L \leq \ell \leq L}, $$

and then 
$$\sqrt{2n+1} (z_{n, L} - (\zeta_{\ell})_{- L \leq \ell \leq L} ) 
=  \left( \frac{X_{n, \ell} \mathbb{P}(w_0 > c) - 
Y_n \mathbb{E} [x_{\ell} \mathds{1}{w_0 > c}]} { \mathbb{P} (w_0 > c) (\mathbb{P} (w_0 > c) + Y_n/\sqrt{2n+1})} \right)_{-L \leq \ell  \leq L}. 
$$
From the convergence of $((X_{n,\ell})_{-L \leq \ell \leq L}, Y_n)$, we deduce that the numerator converges to a centered Gaussian vector with 
covariance matrix given by 
\begin{align*} & \left( \mathbb{P} (w_0 > c)^2 
\operatorname{Cov}^{(XX)}_{\ell, \ell'} 
-\mathbb{P} (w_0 > c) \mathbb{E} [ x_{\ell} \mathds{1}_{w_0 > c}] \operatorname{Cov}^{(XY)}_{\ell'}
\right. \\ & \left. - \mathbb{P} (w_0 > c) \mathbb{E} [ x_{\ell'} \mathds{1}_{w_0 > c}] \operatorname{Cov}^{(XY)}_{\ell}
+ \mathbb{E} [ x_{\ell} \mathds{1}_{w_0 > c}]
\mathbb{E} [ x_{\ell'} \mathds{1}_{w_0 > c}]\operatorname{Cov}^{(YY)} \right)_{-L \leq \ell, \ell' \leq L}, 
\end{align*}
which can be rewritten as 
$$ \left( \mathbb{P}(w_0 > c)^2 \sum_{h \in \mathbb{Z}} 
\operatorname{Cov} ((x_{\ell} - \zeta_{\ell}) \mathds{1}_{w_0 > c}, 
(x_{\ell'+h} - \zeta_{\ell'}) \mathds{1}_{w_h > c} )
\right)_{- L \leq \ell, \ell' \leq L}
$$
since 
$$\zeta_{\ell} = \frac{\mathbb{E} [x_{\ell} \mathds{1}{w_0 > c}]}{\mathbb{P} (w_0 > c)}.$$
Since $Y_n$ converges in distribution, 
the denominator tends to $\mathbb{P}(w_0 > c)^2$ in probability. Since $C(0) > |C(1)|$, $\mathbb{P}(w_0 > c) > 0$, and by Slutsky's theorem, the ratio converges in distribution to a centered Gaussian vector of covariance matrix 
$$ \left( \mathbb{P}(w_0 > c)^{-2} \sum_{h \in \mathbb{Z}} 
\operatorname{Cov} ((x_{\ell} - \zeta_{\ell}) \mathds{1}_{w_0 > c}, 
(x_{\ell'+h} - \zeta_{\ell'}) \mathds{1}_{w_h > c} )
\right)_{- L \leq \ell, \ell' \leq L}
$$

\end{proof}

\begin{rem}
While a nonstationary $x_i$ is not the focus of this paper, we provide some preliminary results when $x_i$ is a simple random walk; that is, when $x_0 = 0$ and $(w_p)_{p \in \mathbb{Z}}$ are i.i.d. Gaussian (to start with).
Since the hinge points are ``regularly distributed'' (see the law of large numbers), following the notation in Lemma \ref{lemma 4.1}, we have 
$$ 
\frac{H(p -\ell -1, n)}{H(-n-1, n)} \simeq \frac{n- p + \ell + 1}{2n+1} \simeq \frac{n-p}{2 n}
$$ 
for fixed $\ell$ and $n$ going to infinity. In this case, we can expect 
$$
z_{n}(\ell) \simeq x_0 + \sum_{p= 1}^{n + \ell} w_p \frac{n-p}{2n} 
-  \sum_{p= -n + \ell + 1}^{0} w_p \frac{n+p}{2n}.$$
We would get 
$z_{n}(\ell)$ not much dependent on $\ell$ at first order, and 
\begin{align*}
z_{n}(\ell) &\, \simeq \mathcal{N} \left(0,\, \sum_{p=1}^n \left(\frac{n-p} {2n}  \right)^2+ \sum_{p=-n}^0 \left(\frac{n+p} {2n}  \right)^2  \right)\\
&\,=\mathcal{N} \left(0,  \frac{n (n-1)(2n-1)/6}{ 4n^2} 
+ \frac{n (n+1)(2n+1)/6}{ 4n^2}  \right)\,,
\end{align*}
which has a leading order  
$$z_{n}(\ell) \simeq \mathcal{N} (0, n/6).$$
The vector $z_{n, L}$, at leading order, is expected to be approximately constant, and equal to a normal variable of mean $0$ and variance $n/6$. 

The situation is different if we compare different values of $\ell$. At the limit, in the coordinates of $z_{n, L}$, we can show that there is a jump between the coordinates of negative indices and the coordinates of nonnegative indices, of order $1$; whereas the other variations of coordinates have order $1/\sqrt{n}$. The coordinates themselves are of order $\sqrt{n}$. Indeed, when we compare two consecutive values $\ell$ and $\ell + 1$, we get 
\begin{align*} 
&z_{n}( \ell + 1)- z_{n}(\ell) \\
  = \,&
\sum_{p = 1}^{n+ \ell} w_p \frac{H(p -\ell -2 , n) -H(p -\ell -1 , n) }{H(-n-1, n)}
+ w_{n+ \ell + 1} \frac{H(n-1, n)}{H(-n-1, n)}
\\ &  -  \sum_{p= -n + \ell + 2}^{0} w_p \frac{ H( -n-1, p- \ell - 2) -H( -n-1, p- \ell - 1) }{ H(-n-1, n)} + w_{-n+ \ell + 1} \frac{   H( -n-1, -n) }{ H(-n-1, n)} 
\\ =  \,&\frac{1}{ H(-n-1, n)} \left( \sum_{p = 1}^{n+ \ell} w_p \mathds{1}_{w_{p -\ell -1 } > c } +  \sum_{p= -n + \ell + 2}^{0} w_p \mathds{1}_{w_{p -\ell -1 } > c } +
w_{n+ \ell + 1}  \mathds{1}_{w_{n} > c}
+  w_{-n+ \ell + 1}  \mathds{1}_{w_{-n} > c} \right) 
\\ = \,& \frac{1}{ H(-n-1, n)} \sum_{p= -n + \ell +1 }^{n + \ell+1} w_p \mathds{1}_{w_{p- \ell - 1} > c} =  \frac{1}{ H(-n-1, n)} \sum_{p= -n }^{n } w_{p + \ell + 1} \mathds{1}_{w_p > c}.  
\end{align*} 
For $\ell = -1$, one gets 
$$
\frac{1}{ H(-n-1, n)} \sum_{p= -n }^{n } w_{p } \mathds{1}_{w_p > c}\,,
$$
which, by the law of large numbers, converges almost surely to 
$$\frac{\mathbb{E} [ w_0 \mathds{1}_{w_0 > c} ]}{ \mathbb{P} (w_0 > c)} 
= \mathbb{E} [ w_0 | w_0 > c].$$
For $\ell \neq -1$, there is a central limit theorem, with a variance of order $1/n$. Indeed, 
the $L^2$ norm of the sum above (which in fact corresponds to sums of increments of a martingale) is 
$$\sum_{-n \leq p, q \leq n} \mathbb{E} [w_{p + \ell + 1} w_{q + \ell + 1}
\mathds{1}_{w_p > c}\mathds{1}_{w_q > c}].$$
All terms are uniformly bounded. For $p \neq q, q - \ell - 1$, the index 
$p+ \ell + 1$ is different from $q + \ell + 1$ (because $p \neq q$), different from $p$ (because $\ell \neq -1$) and different from $q$ (because $p \neq q - \ell - 1$). Hence, 
$w_{p+\ell + 1}$ is independent of the other factors, centered, and then the expectation of the product of all factors is zero. In the double sum in $p$ and $q$, there is at most two non-zero terms for each value of $q$, and then $\mathcal{O}(n)$ non-zero terms in total. The double sum is then $\mathcal{O}(n)$. 
We deduce that
$$\mathbb{E} [(z_{n}( \ell + 1)- z_{n}(\ell))^2] = \mathcal{O}(1/n). $$
The increments of $z_{n}( \ell)$ tend in probability to zero, except the one from $z_{n}( -1)$ to $z_{n}( 0)$, which converges to $\mathbb{E} [ w_0 | w_0 > c]$.

\end{rem}

\subsection{Numerical simulation}\label{section numerics}

See Figure \ref{fig:PRSAstochastic} for an example of various stationary random processes, including $x^{(1)}$ following a Gaussian white noise, $x^{(2)}$ following ARMA(2,1) with autoregressive coefficient (AR) $(0.01, 0.15)$ and moving average (MA) coefficient $-0.15$, $x^{(3)}$ following  ARMA(2,1) with AR coefficient $(0.1, -0.85)$ and MA coefficient $0.5$, and $x^{(4)}$ following ARMA(4,2) with AR coefficient $(0.01, 0.01, 0,-0.9)$ and MA coefficient $(0.2,-0.5)$. We realize $n=8,000,000$ points and choose $L=100$. We observe that the established law of large numbers accurately captures the empirical behavior of $z_{n,L}$. Moreover, although all datasets are generated from simple stationary ARMA models, which intuitively are not quasi-periodic by definition, the resulting outputs exhibit diverse patterns and dynamics. In particular, some realizations display intricate, seemingly quasi-periodic structures with amplitude modulation at fixed frequencies, as exemplified by the ARMA(4,2) case.

Mathematically, by interpreting the covariance function as the inverse Fourier transform of the power spectrum of a stationary random process, the PRSA output can be viewed as the detected quasi-periodic structure of the process if the covariance function is composed of incommensurate frequencies. However, even if it exists, this notion of quasi-periodicity is inherently statistical in nature and differs fundamentally from that defined for deterministic functions. This result further underscores the necessity of caution when interpreting the outputs of PRSA.

\begin{figure}[!hbt]
\centering
\includegraphics[trim=0 0 0 0, clip, width=\textwidth]{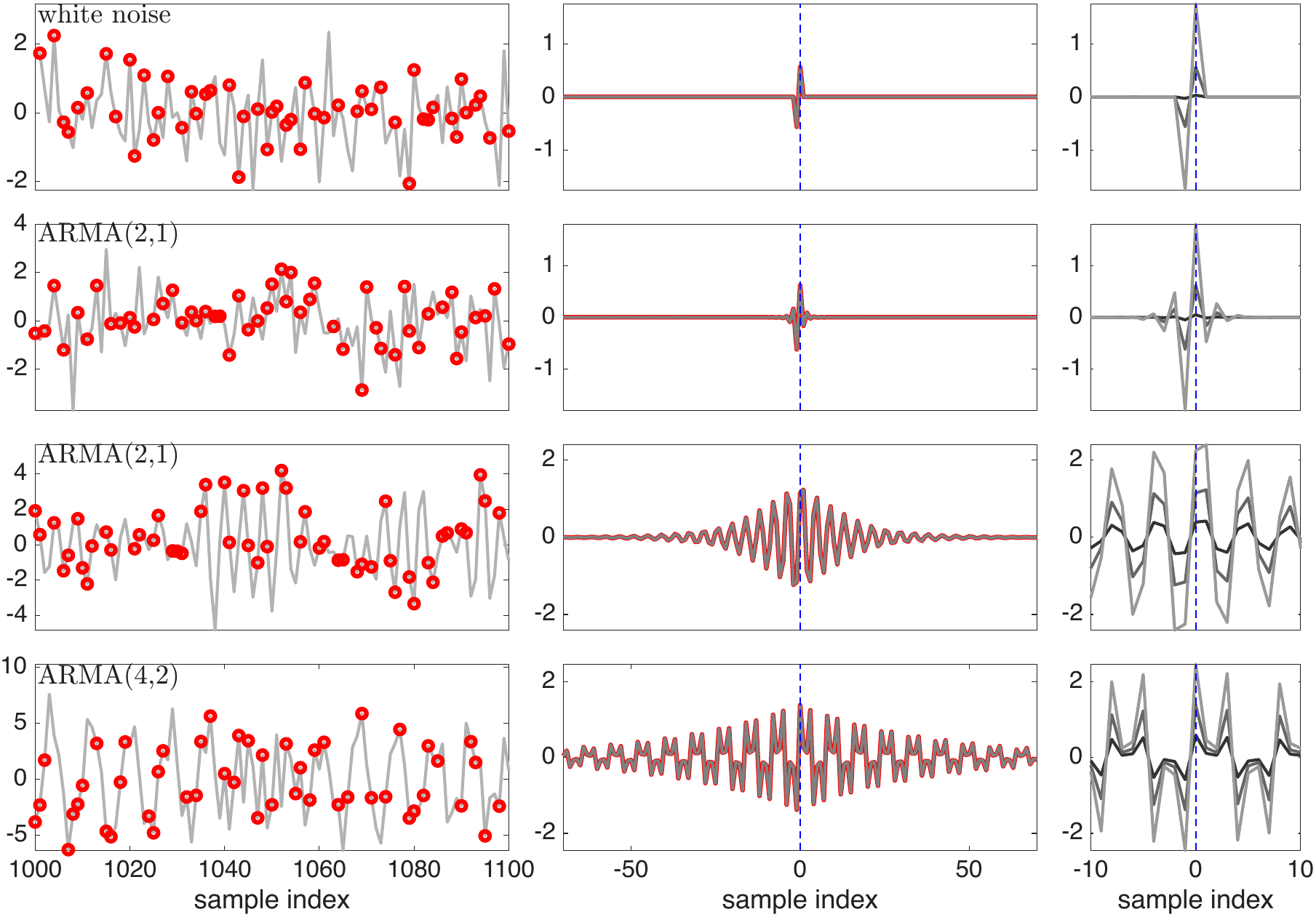} 
\caption{\label{fig:PRSAstochastic} PRSA of stationary stochastic signals. Left column: a realization of a random process indicated in the upper left corner is shown in gray, with the points $w_n>0$ marked in red; that is, $c=0$.  Middle column: the predicted law of large number of PRSA with $c=0$ is the red curve, and the PRSA output
$z_{n,L}$ with $c=0$ is superimposed as the dark gray curve. The vertical dashed blue line indicated the middle point of $z_{n,L}$. Right column: the PRSA outputs
$z_{n,L}$ with $c=-3,0,3$ are superimposed as the black, dark gray and light gray curves, respectively. The vertical dashed blue line indicated the middle point of $z_{n,L}$. As predicted by the theorem, these curves differ only by a constant scaling factor.}
\end{figure}

{ 
\section{Conclusion and future direction}
While PRSA has been widely applied to the analysis of biomedical signals, our investigations using both a simple two-harmonic deterministic model and a stationary random process model highlight the need for caution when interpreting PRSA outputs. The seemingly quasiperiodic patterns in the output may not faithfully represent the underlying dynamics unless the generating mechanism is clearly specified, an essential consideration if the ultimate scientific goal is to understand the process that produced the observed time series. Further study is therefore needed to elucidate what information is truly embedded in the signal, in line with the scientific rationale motivating the use of PRSA.

From a mathematical standpoint, several interesting directions remain open for future work. First, one may consider the nonnull setting, in which the observed time series takes the form $f+X$, where $f$ satisfies the two-harmonic model and $X$ is a stationary random process. In practical applications, PRSA is expected to recover information about $f$ while minimizing the impact $X$; that is, the PRSA output of $f+X$ should be close to that of $f$. We need to quantify how close is close, and its statistical behavior.

Second, while this work adopts the classical mathematical definition of quasiperiodicity \cite{levitan1982almost,arnol2013mathematical,amerio2013almost}, in practice the term “quasiperiodic” is often used more broadly to describe deterministic signals with slowly varying amplitude and frequency, or random processes exhibiting recurrent structures such as cyclostationarity \cite{gardner1975characterization}. Motivated by this practical perspective, a promising extension is to analyze PRSA adapting the framework of the adaptive harmonic model, where $f(t)=\cos(2\pi t) + A(t)\cos(2\pi\phi(t))$, with $A(t)>0$ smooth and slowly varying, and $\phi(t)$ smooth, monotonically increasing and satisfying $\phi'(t)\in (0,1)$ with slow variation, or its generalization to non-sinusoidal oscillation \cite{HTWu2013}. The formulation of $A(t)$ and $\phi'(t)$ captures amplitude and frequency modulations commonly observed in real-world signals. Note that if $A(t)=A>0$ and $\phi'(t)=\xi\in (0,1)$, then it is reduced to the two harmonic model considered in this paper. Another possibility is adapting the almost periodic function framework \cite{EoM_almost_periodic_function}, which captures the periodicity different from the adaptive harmonic model, which is phenomenological.

Third, the behavior of PRSA under non-Gaussian or nonstationary noise warrants further analysis. Ideally, a central limit theorem should still hold under appropriate regularity conditions. Since the techniques in the present study rely heavily on Gaussianity and stationarity, new mathematical tools will be required to handle this more general case. If a suitable Gaussian approximation can be established, one could further develop a bootstrap-based inference framework for practical applications.

Finally, an important direction is the theoretical analysis of multivariable PRSA \cite{schumann2008bivariate,bauer2009bprsa,muller2012bivariate}. Extending PRSA to high-dimensional time series raises questions analogous to those in the univariate setup, with extra information to explore regarding the interaction among channels. Collectively, these directions aim to establish a mathematically rigorous foundation for understanding and applying PRSA to address fundamental scientific questions.
}

\bibliographystyle{plain}
\bibliography{ref}

\begin{thebibliography}{10}

\bibitem{amerio2013almost}
Luigi Amerio and Giovanni Prouse.
\newblock {\em Almost-periodic functions and functional equations}.
\newblock Springer Science \& Business Media, 2013.

\bibitem{arnol2013mathematical}
Vladimir~Igorevich Arnol'd.
\newblock {\em Mathematical methods of classical mechanics}, volume~60.
\newblock Springer Science \& Business Media, 2013.

\bibitem{bauer2009bprsa}
A~Bauer, P~Barthel, A~Muller, J~Kantelhardt, and G~Schmidt.
\newblock Bivariate phase-rectified signal averaging--a novel technique for
  cross-correlation analysis in noisy nonstationary signals.
\newblock {\em J Electrocardiol}, 42(6):602--606, 2009.

\bibitem{bauer2006phase}
Axel Bauer, Jan~W Kantelhardt, Armin Bunde, Petra Barthel, Raphael Schneider,
  Marek Malik, and Georg Schmidt.
\newblock Phase-rectified signal averaging detects quasi-periodicities in
  non-stationary data.
\newblock {\em Physica A: Statistical Mechanics and its Applications},
  364:423--434, 2006.

\bibitem{campana2010phase}
Lisa~Marie Campana, Robert~Llewellyn Owens, Gari~D Clifford, Stephen~D Pittman,
  and Atul Malhotra.
\newblock Phase-rectified signal averaging as a sensitive index of autonomic
  changes with aging.
\newblock {\em Journal of applied physiology}, 108(6):1668--1673, 2010.

\bibitem{chung2021persistent}
Yu-Min Chung, Chuan-Shen Hu, Yu-Lun Lo, and Hau-Tieng Wu.
\newblock A persistent homology approach to heart rate variability analysis
  with an application to sleep-wake classification.
\newblock {\em Frontiers in physiology}, 12:637684, 2021.

\bibitem{EoM_almost_periodic_function}
{Encyclopedia of Mathematics}.
\newblock {Almost-periodic function}.
\newblock
  \url{http://encyclopediaofmath.org/index.php?title=Almost-periodic_function&oldid=45083},
  2025.
\newblock European Mathematical Society, last accessed October 2025.

\bibitem{gardner1975characterization}
W~Gardner and L~Franks.
\newblock Characterization of cyclostationary random signal processes.
\newblock {\em IEEE Transactions on information theory}, 21(1):4--14, 1975.

\bibitem{georgieva2014phase}
Antoniya Georgieva, AT~Papageorghiou, SJ~Payne, M~Moulden, and CWG Redman.
\newblock Phase-rectified signal averaging for intrapartum electronic fetal
  heart rate monitoring is related to acidaemia at birth.
\newblock {\em BJOG: An International Journal of Obstetrics \& Gynaecology},
  121(7):889--894, 2014.

\bibitem{hardle1992kernel}
Wolfgang H{\"a}rdle and Philippe Vieu.
\newblock Kernel regression smoothing of time series.
\newblock {\em Journal of Time Series Analysis}, 13(3):209--232, 1992.

\bibitem{kantelhardt2007prsa}
J.~W. Kantelhardt.
\newblock Phase-rectified signal averaging for the detection of
  quasi-periodicities and the prediction of cardiovascular risk.
\newblock {\em Chaos Interdiscip. J. Nonlinear Sci.}, 17:015112, 2007.

\bibitem{kisohara2013multi}
Masaya Kisohara, Phyllis~K Stein, Yutaka Yoshida, Mari Suzuki, Narushi Iizuka,
  Robert~M Carney, Lana~L Watkins, Kenneth~E Freedland, James~A Blumenthal, and
  Junichiro Hayano.
\newblock Multi-scale heart rate dynamics detected by phase-rectified signal
  averaging predicts mortality after acute myocardial infarction.
\newblock {\em Europace}, 15(3):437--443, 2013.

\bibitem{lemay2008phase}
Mathieu Lemay, Yann Prudat, Vincent Jacquemet, and Jean-Marc Vesin.
\newblock Phase-rectified signal averaging used to estimate the dominant
  frequencies in ecg signals during atrial fibrillation.
\newblock {\em IEEE Transactions on Biomedical Engineering}, 55(11):2538--2547,
  2008.

\bibitem{levitan1982almost}
Boris~Moiseevich Levitan and Vasilii~Vasil{\'e}vich Zhikov.
\newblock {\em Almost periodic functions and differential equations}.
\newblock CUP Archive, 1982.

\bibitem{CYLin2018}
Chen-Yun Lin, Li~Su, and Hau-Tieng Wu.
\newblock Wave-shape function analysis.
\newblock {\em J Fourier Anal Appl}, 24(2):451--505, 2018.

\bibitem{WSFmanifold2021}
Yu-Ting Lin, John Malik, and Hau-Tieng Wu.
\newblock Wave-shape oscillatory model for nonstationary periodic time series
  analysis.
\newblock {\em Foundations of Data Science}, 3(2):99--131, 2021.

\bibitem{liu2017quasi}
Quan Liu, Yi-Feng Chen, Shou-Zen Fan, Maysam~F Abbod, and Jiann-Shing Shieh.
\newblock Quasi-periodicities detection using phase-rectified signal averaging
  in eeg signals as a depth of anesthesia monitor.
\newblock {\em IEEE Transactions on Neural Systems and Rehabilitation
  Engineering}, 25(10):1773--1784, 2017.

\bibitem{montgomery1994ten}
Hugh~L Montgomery.
\newblock {\em Ten lectures on the interface between analytic number theory and
  harmonic analysis}.
\newblock Number~84. American Mathematical Soc., 1994.

\bibitem{muller2012bivariate}
Alexander M{\"u}ller, Adrian Morley-Davies, Petra Barthel, Katerina Hnatkova,
  Axel Bauer, Kurt Ulm, Marek Malik, and Georg Schmidt.
\newblock Bivariate phase-rectified signal averaging for assessment of
  spontaneous baroreflex sensitivity: normalization of the results.
\newblock {\em Journal of electrocardiology}, 45(1):77--81, 2012.

\bibitem{nadaraya1964estimating}
Elizbar~A Nadaraya.
\newblock On estimating regression.
\newblock {\em Theory of Probability \& Its Applications}, 9(1):141--142, 1964.

\bibitem{nasario2014refining}
Olivasse Nasario-Junior, Paulo~Roberto Benchimol-Barbosa, and Jurandir Nadal.
\newblock Refining the deceleration capacity index in phase-rectified signal
  averaging to assess physical conditioning level.
\newblock {\em Journal of Electrocardiology}, 47(3):306--310, 2014.

\bibitem{Rilling2008}
G.~Rilling and P.~Flandrin.
\newblock {One or Two Frequencies? The Empirical Mode Decomposition Answers}.
\newblock {\em IEEE Transactions on Signal Processing}, 56(1):85--95, jan 2008.

\bibitem{schumann2008bivariate}
Aicko~Y Schumann, Jan~W Kantelhardt, Axel Bauer, and Georg Schmidt.
\newblock Bivariate phase-rectified signal averaging.
\newblock {\em Physica A: Statistical Mechanics and its Applications},
  387(21):5091--5100, 2008.

\bibitem{su2025data}
Pei-Chun Su and Hau-Tieng Wu.
\newblock Data-driven optimal shrinkage of singular values under
  high-dimensional noise with separable covariance structure with application.
\newblock {\em Applied and Computational Harmonic Analysis}, 74:101698, 2025.

\bibitem{takens2006detecting}
Floris Takens.
\newblock Detecting strange attractors in turbulence.
\newblock In {\em Dynamical Systems and Turbulence, Warwick 1980: proceedings
  of a symposium held at the University of Warwick 1979/80}, pages 366--381.
  Springer, 2006.

\bibitem{watson1964smooth}
Geoffrey~S Watson.
\newblock Smooth regression analysis.
\newblock {\em Sankhy{\=a}: The Indian Journal of Statistics, Series A}, pages
  359--372, 1964.

\bibitem{HTWu2013}
Hau-Tieng Wu.
\newblock Instantaneous frequency and wave shape functions (i).
\newblock {\em Appl Comput Harmon A}, 35(2):181--199, 2013.

\bibitem{zbilut1992embeddings}
Joseph~P Zbilut and Charles~L Webber~Jr.
\newblock Embeddings and delays as derived from quantification of recurrence
  plots.
\newblock {\em Physics letters A}, 171(3-4):199--203, 1992.

\end{thebibliography}

\end{document}